
\documentclass[a4paper,12pt,reqno]{amsart}
\usepackage{fullpage}
\usepackage[dvipsnames]{xcolor}
\usepackage{amsmath,amssymb}
\usepackage{bm} 
\usepackage[breaklinks,bookmarks=false]{hyperref}
\hypersetup{colorlinks, linkcolor=blue, citecolor=blue,
urlcolor=blue, plainpages=false, pdfwindowui=false,
pdfstartview={FitH}, pdftitle={}}
\usepackage{graphicx}
\usepackage{psfrag}
\usepackage{enumerate}
\usepackage{tikz}
\usepackage[normalem]{ulem} 
\usepackage{cancel} 
\usepackage[textsize=tiny,color=blue!50!white]{todonotes}
\setlength{\marginparwidth}{2cm}

\newtheorem{theorem}{Theorem}[section]
\newtheorem{definition}[theorem]{Definition}
\newtheorem{remark}[theorem]{Remark}

\newtheorem{proposition}[theorem]{Proposition}
\newtheorem{lemma}[theorem]{Lemma}
\newtheorem{assumption}[theorem]{Assumption}

\def\N{\mathbb{N}}
\def\KK{\mathcal{K}}
\def\TT{\mathcal{T}}
\def\F{\bm{F}}
\def\EE{\mathcal{F}}
\def\ee{F}
\def\p{\bm{p}}
\def\uni#1{{\rm uni}(#1)}

\def\reff#1#2{\stackrel{\eqref{#1}}{#2}}
\def\refff#1#2#3{\stackrel{\substack{\eqref{#1}\\\eqref{#2}}}{#3}}
\newcommand{\norm}[3][]{#1\Vert #2#1\Vert_{#3}}
\def\bPhi{\bm{\Phi}}
\def\H{\bm{H}}
\def\V{\bm{V}}
\def\RT{\bm{RT}}
\def\bv{\bm{v}}
\def\bx{\bm{x}}
\def\w{\bm{w}}
\def\n{\bm{n}}
\def\R{\mathbb{R}}
\def\dual#1#2{(#1\,,\,#2)}
\def\coarse{h}

\def\II{\mathcal{I}}
\def\VV{\mathcal{V}}
\def\osc{\mathrm{osc}}
\def\supp{\mathrm{supp}}
\def\div{\mathrm{div}}
\def\diam{\mathrm{diam}}
\def\level{\mathrm{level}}
\def\set#1#2{\{#1\,:\,#2\}}

\def\argmin#1{\underset{#1}{\operatorname{argmin}}}

\def\ssigma{\boldsymbol{\sigma}}
\def\ttau{\boldsymbol{\tau}}
\def\ttau{\boldsymbol{\tau}}

\newcommand\ie{i.e.}

\newcommand\eg{e.g.}

\newcommand\eq{:=}

\newcommand\ls{\lesssim}
\newcommand\Dv{\nabla {\cdot}} 
\newcommand\ver{{\bm a}}
\newcommand\vertt{{\bm b}}
\newcommand\nv{\bm 0}
\newcommand\QQ{\mathbb{Q}}
\newcommand\oma{\omega_\ver}
\newcommand\Ta{\TT_\ver}
\newcommand\psia{\psi_\ver}

\begin{document}

\author{Gregor Gantner}
\author{Martin Vohral\'ik}
\title{Inexpensive polynomial-degree-robust\\
equilibrated flux a posteriori estimates \\
for isogeometric analysis}
\thanks{This project has received funding from the European Research Council (ERC) under the European Union's Horizon 2020 research and innovation program (grant agreement No 647134 GATIPOR).
In addition, the first author has been supported by the Austrian Science Fund (FWF) under grant P29096 and J4379-N}
\def\fpage{\pageref{CorrectFirstPageLabel}}

\address{
Institute of Analysis and Scientific Computing, TU Wien, Wiedner Hauptstra{\ss}e 8-10, 1040 Vienna, Austria (\href{mailto:gregor.gantner@asc.tuwien.ac.at}{\texttt{gregor.gantner@asc.tuwien.ac.at}}). Current address: Institute for Numerical Simulation, University of Bonn, Friedrich-Hirzebruch-Allee 7, 53115 Bonn, Germany}

\address{Inria, 2 rue Simone Iff, 75589 Paris, France \& CERMICS, Ecole des Ponts, 77455 Marne-la-Vall\'ee, France
(\href{mailto:martin.vohralik@inria.fr}{\texttt{martin.vohralik@inria.fr}}).}

\begin{abstract}
We consider isogeometric discretizations of the Poisson model problem, focusing on high polynomial degrees and strong hierarchical refinements. We derive a posteriori error estimates by equilibrated fluxes, \ie, vector-valued mapped piecewise polynomials lying in the $\H(\div)$ space which appropriately approximate the desired divergence constraint. Our estimates are constant-free in the leading term, locally efficient, and robust with respect to the polynomial degree. They are also robust with respect to the number of hanging nodes arising in adaptive mesh refinement employing hierarchical B-splines.
Two partitions of unity are designed, one with larger supports corresponding to the mapped splines, and one with small supports corresponding to mapped piecewise multilinear finite element hat basis functions. The equilibration is only performed on the small supports, avoiding the higher computational price of equilibration on the large supports or even the solution of a global system. Thus, the derived estimates are also as inexpensive as possible.
An abstract framework for such a setting is developed, whose application to a specific situation only requests a verification of a few clearly identified assumptions.
Numerical experiments illustrate the theoretical developments.
\end{abstract}

\maketitle

\tableofcontents

\section{Introduction}
A posteriori error estimates for standard finite element methods (FEM) are nowadays well understood~\cite{Ainsw_Oden_a_post_FE_00,Repin_book_08,Verf_13}.
On the one hand, they allow to assess the quality of the computed approximation, and, on the other hand, they indicate where to refine the underlying mesh of the computational domain.
Among the existing error estimators, those based on equilibrated fluxes~\cite{Prag_Syng_47, Lad_Leg_83, Dest_Met_expl_err_CFE_99, Braess_Scho_a_post_edge_08} have the advantage that they provide a \emph{guaranteed} upper bound for the approximation error which is constant-free in the leading term.
The estimator from~\cite{Dest_Met_expl_err_CFE_99, Braess_Scho_a_post_edge_08} even turns out to be robust with respect to the polynomial degree~\cite{Brae_Pill_Sch_p_rob_09}, i.e., it provides also a lower bound with an efficiency constant that does \emph{not} depend on the polynomial degree.
This result has been recently generalized in several directions; see \cite{ev15, ev20} and the references therein.
We also mention \cite{Dol_Ern_Voh_hp_16}, which covers standard FEM on triangular/rectangular meshes with hanging nodes, and its generalization to arbitrarily many hanging nodes~\cite{Ern_Smears_Voh_H-1_lift_17}.

In this work, we aim to generalize this result to isogeometric analysis (IGA) \cite{hcb05,bbchs06,chb09}.
The central idea of IGA is to use the same ansatz functions for the representation of the problem geometry in computer-aided design (CAD) and for the discretization of the partial differential equation (PDE).
While the CAD standard for spline representation in a multivariate setting relies on tensor-product B-splines, several extensions of the B-spline model have emerged  that allow for adaptive refinement, e.g., hierarchical splines~\cite{vgjs11,gjs12}, (analysis-suitable) T-splines~\cite{szbn03,slsh12,bbsv13},  or LR-splines~\cite{dlp13,jkd14}; see also \cite{jrk15,hkmp17,bggpv22} for a  comparison of these approaches.

\subsection{Available results}
To steer an adaptive refinement, rigorous a posteriori error estimators have been developed.
Assuming a certain admissibility condition of the employed meshes, the works~\cite{bg16a,gp20} generalize the weighted residual error estimator from standard FEM to IGA with hierarchical splines and analysis-suitable T-splines, respectively.
It has even been proved that the corresponding adaptive algorithms converge at optimal algebraic rate with respect to the number of mesh elements~\cite{bg17,ghp17,gp20}, see also the recent overview article~\cite{bggpv22}.
However, the reliability and efficiency constants depend on the employed polynomial degree, which is indeed witnessed in numerical experiments, see, e.g., \cite{bggpv22} for hierarchical splines.
Similarly, for the hierarchical spline space from \cite{bg16b}, the work~\cite{Buf_Gar_a_post_IGA_18} derives a weighted residual estimator being the sum of indicators associated to basis functions instead of elements. It is shown to be reliable for \emph{arbitrary} hierarchical meshes, with an unknown reliability constant that again particularly depends on the used polynomial degree $p$.

In the spirit of \cite{repin99,repin00a,repin00b}, the works~\cite{kt15,Klei_Tom_a_post_IGA_15,Matc_a_post_IGA_18} present guaranteed fully computable upper bounds of the approximation error for tensor-product splines and hierarchical splines, respectively. A second estimate is also derived, giving a lower bound of the error.
However, to compute these so-called functional-type estimators, a \emph{global} minimization problem may need to be solved or an $\H(\div)$ flux not in the equilibrium with the load
may be employed, and the efficiency of the upper bound, as well as the reliability of the lower bound, are theoretically unclear.
In the recent work~\cite{Thai_Cham_Ha-Mi_a_post_IGA_19}, a non-$\H(\div)$ approximation of an equilibrated flux is constructed for tensor-product splines in extension of the concepts of~\cite{Lad_Leg_83}. It only requires to solve locally, on the knot spans, a pair of a low-order problem for the normal fluxes together with a high-order problem for the equilibrated flux approximation.
A generalization to hierarchical splines is briefly sketched and a corresponding numerical example is provided. While this yields a fully computable and approximately guaranteed upper bound on the error, efficiency of the resulting estimator is unclear.

\subsection{Outline}
The present manuscript is organized as follows.
We introduce the model problem and its general Galerkin discretization in Section~\ref{sec:setting}. We then motivate and introduce our new approach in the simplified spline setting in Section~\ref{sec:motivation}.
Sections~\ref{sec_splines} and~\ref{sec:partitions} subsequently contain a detailed description of the discretization space formed by hierarchical B-splines. This technical description might be skipped in a first reading, since all our results are only based on a couple of abstract assumptions collected in Section~\ref{sec:assumptions}, where we also verify them in the hierarchical B-splines setting.
Section~\ref{sec:equilibration} describes in detail our inexpensive flux equilibration in the general context. The a posteriori error estimates are then derived in Section~\ref{sec:a_post}, where we prove their reliability and (local) efficiency.
Numerical experiments illustrating the theoretical findings are collected in Section~\ref{sec:numerics}. Finally, Appendix~\ref{sec:extension} proves the broken polynomial extension property that is central for the polynomial-degree robustness in the present setting.

\section{Model problem and its Galerkin discretization} \label{sec:setting}

Let $\Omega$ be an open bounded connected Lipschitz domain in $\R^d$ with $d =2,3$.
We consider the Poisson model problem  with homogeneous Dirichlet data of finding $u: \Omega \rightarrow \mathbb{R}$ such that
\begin{align}\label{eq:Poisson}
\begin{split}
-\Delta u&=f \quad\text{in }\Omega,\\
u&=0\quad\text{on }\partial\Omega,
\end{split}
\end{align}
where $f\in L^2(\Omega)$ is a given source term.
On an arbitrary subset $\omega\subseteq\overline\Omega$, let $\dual{\cdot}{\cdot}_\omega\eq\int_\omega (\cdot) (\cdot)\,{\rm d}x$ and $\norm{\cdot}{\omega}$ denote the $L^2$-scalar product and the corresponding norm, respectively; we also denote by $\norm{\cdot}{\infty,\omega}$ the $L^\infty$-norm.
Then, the weak formulation of problem~\eqref{eq:Poisson} consists in finding $u\in H_0^1(\Omega)$ such that
\begin{align}\label{eq:variational}
\dual{ \nabla u}{\nabla v}_\Omega=\dual{f}{v}_\Omega\quad\text{for all } v\in H_0^1(\Omega).
\end{align}
Let $V_\coarse$ be a finite-dimensional subspace of $H_0^1(\Omega)$.
The corresponding Galerkin approximation is to find $u_\coarse\in V_\coarse$ with
\begin{align}\label{eq:Galerkin}
\dual{ \nabla u_\coarse}{\nabla v_\coarse}_\Omega=\dual{f}{v_\coarse}_\Omega\quad\text{for all } v_\coarse\in V_\coarse.
\end{align}

In this work, we will choose $V_\coarse$ as the space of mapped \emph{hierarchical splines}.
To this end, we assume that $\Omega$ can be parametrized over $\widehat\Omega\eq(0,1)^d$ via a bi-Lipschitz mapping $\F:\widehat\Omega\to\Omega$ with positive Jacobian determinant, i.e., $\Omega$ = $\F(\widehat\Omega)$, $\F\in W^{1,\infty}({\widehat\Omega})$, $\F^{-1}\in W^{1,\infty}(\Omega)$, and $\det(D\F)>0$.
Note that $\F$ and its inverse $\F^{-1}$ can both be continuously extended to Lipschitz continuous functions on $\overline{\widehat\Omega}$ and $\overline\Omega$, respectively.

\section{Motivation and the new approach in a simplified spline setting} \label{sec:motivation}

In this section, we recall equilibration for standard $C^0$ finite elements, discuss difficulties for a generalization to IGA, and introduce our new approach.
In order to explain the main ideas as clearly as possible, we will consider a strongly simplified setting.
In particular, we suppose in this section that $\F$ is just the identity, so that $\Omega = \widehat \Omega = (0,1)^d$, and that $\TT_\coarse$ is a uniform tensor-product mesh of $\Omega$ consisting of elements $K$ being all $d$-rectangular parallelepipeds.
Let $\mathbb{Q}^{\bm{p}}(\TT_\coarse)$ denote all $\TT_\coarse$-piecewise tensor-product polynomials of fixed degree $\bm{p}\eq(p,\dots,p)$ in each component, with $p\geq1$.
We assume additionally that the right-hand side $f$ in~\eqref{eq:Poisson} is a $\TT_\coarse$-piecewise tensor-product polynomial of degree $\bm{p}-\bm{1}:= \bm{p} - (1,\dots,1)$, i.e.,
\begin{align*}
	f\in \mathbb{Q}^{\bm{p}-\bm{1}}(\TT_\coarse).
\end{align*}

\subsection{Standard equilibration in finite elements} \label{sec:standard_equi}
In finite elements, the ansatz space $V_\coarse$ from~\eqref{eq:Galerkin} is the subspace of $\mathbb{Q}^{\bm{p}}(\TT_\coarse)$ formed by functions which are continuous over the mesh interfaces and zero on the boundary of $\Omega$, i.e.,
\begin{align}\label{eq:standard_elements}
	V_\coarse := \set{v_\coarse \in\mathbb{Q}^{\bm{p}}(\TT_\coarse) \cap C^0(\Omega)}{v_\coarse|_{\partial\Omega} = 0} = \mathbb{Q}^{\bm{p}}(\TT_\coarse) \cap H_0^1(\Omega).
\end{align}

Let $\VV_\coarse$ collect the vertices of the mesh $\TT_\coarse$.
For each $\ver\in\VV_\coarse$, we define the associated continuous piecewise bilinear hat function
\begin{align}\label{eq_psia}
	\psi_\ver\in \mathbb{Q}^{\bm{1}}(\TT_\coarse) \cap C^0(\Omega)
\end{align}
taking value $1$ in the vertex $\ver$ and $0$ in all other vertices from $\VV_\coarse$.
We denote the interior of its support by
\begin{align}\label{eq_supp}
	\omega_\ver := {\rm int}(\supp(\psi_\ver)).
\end{align}
Note that it corresponds to the union of all (closed) elements in $\TT_\coarse$ containing $\ver$, which we denote by
\begin{align*}
	\TT_\ver := \set{T\in\TT_\coarse}{T \subseteq \supp(\psi_\ver)}.
\end{align*}
Crucially, the $\psi_\ver$ form a partition of unity in that
\begin{align*}
	\sum_{\ver \in \VV_\coarse} \psi_\ver = 1 \quad \text{in }\Omega.
\end{align*}
The hat functions $\psi_\ver$ and their supports $\supp(\psi_\ver)$ are illustrated for $d=2$ in Figure~\ref{fig:bsplines135}, left.

For each vertex $\ver \in \VV_\coarse$, we further introduce the space
\begin{align*}
\H_0(\div,\omega_\ver)\eq
\begin{cases}
\set{\bv\in \H(\div,\omega_\ver)}{\bv{\cdot}\n_{\omega_\ver}=0 \text{ on }\partial\omega_\ver}&\hspace{-2mm}\text{if }\psi_\ver\in H_0^1(\Omega),\\
\set{\bv\in \H(\div,\omega_\ver)}{\bv{\cdot}\n_{\omega_\ver}=0\text{ on }\partial\omega_\ver\cap(\psi_\ver)^{-1}(\{0\})}&\hspace{-2mm}\text{else},\\
\end{cases}
\end{align*}
where $\n_{\omega_\ver}$ denotes the outer normal vector on $\partial\omega_\ver$ and $\bv{\cdot}\n_{\omega_\ver}$ is understood in the appropriate weak sense.
With the usual broken (elementwise) Raviart--Thomas space of order $\widetilde\p:=(\widetilde p,\dots,\widetilde p):=\p+\bm{1}$
\begin{align} \label{eq:RT1}
	\RT^{\widetilde\p}(\TT_\ver) :=
	\begin{cases}
	\mathbb{Q}^{\widetilde\p+(1,0)}(\TT_\ver)\times \mathbb{Q}^{\widetilde\p+(0,1)}(\TT_\ver)\quad&\text{ if } d=2,
	 \\
	 \mathbb{Q}^{\widetilde\p+(1,0,0)}(\TT_\ver)\times \mathbb{Q}^{\widetilde\p+(0,1,0)}(\TT_\ver)\times  \mathbb{Q}^{\widetilde\p+(0,0,1)}(\TT_\ver)&\text{ if } d=3,
	 \end{cases}
\end{align}
we also define the discrete subspace
\begin{align}\label{eq_Vha_FE}
	\V_\coarse^{\ver}\eq \RT^{\widetilde\p}(\TT_\ver) \cap \H_0(\div,\omega_\ver).
\end{align}
Then, the local equilibrated fluxes
\begin{align}\label{eq:sigma_a}
	\ssigma_\coarse^{\ver}\eq\argmin{\substack{\bv_\coarse\in\V_\coarse^{\ver}\\
	\Dv\bv_\coarse = f\psi_\ver-\nabla u_\coarse{\cdot}\nabla \psi_\ver}}
	\norm{\bv_\coarse+\psi_\ver \nabla u_\coarse}{\omega_\ver}
\end{align}
give rise to a global equilibrated flux
\begin{align*}
	\ssigma_\coarse := \sum_{\ver \in \VV_\coarse} \ssigma_\coarse^\ver \quad \text{ with } \Dv\ssigma_\coarse = f.
\end{align*}

Following~\cite{Brae_Pill_Sch_p_rob_09, ev15, ev20}, which rely on the tools from~\cite{Cost_Daug_Demk_ext_08, Cost_McInt_Bog_Poinc_10, Demk_Gop_Sch_ext_III_12}, one can show that the corresponding a posteriori estimator $\norm{\ssigma_\coarse + \nabla u_\coarse}{\Omega}$ constitutes a guaranteed upper bound as well as a $p$-robust lower bound for the discretization error, i.e.,
\begin{align}\label{eq:equivalence}
	\norm{\nabla(u_\coarse - u)}{\Omega} \le \norm{\ssigma_\coarse + \nabla u_\coarse}{\Omega} \le C_{\rm eff} \norm{\nabla(u_\coarse - u)}{\Omega},
\end{align}
with a constant $C_{\rm eff}>0$ depending only on the shape-regularity of the mesh $\TT_\coarse$ but \emph{not} on the polynomial degree $p$.
The efficiency bound holds even locally on all mesh elements.
This result has been extended to general rectangular meshes with an arbitrary number of hanging nodes in~\cite{Dol_Ern_Voh_hp_16, Ern_Smears_Voh_H-1_lift_17}, see also Remark~\ref{rem2:naive_approach} below.

\subsection{A straightforward (expensive) generalization to IGA}\label{sec:standard_iga}
Compared to standard finite elements~\eqref{eq:standard_elements}, the IGA/spline space is the subspace of $\mathbb{Q}^{\bm{p}}(\TT_\coarse)$ formed by functions which are continuous over the mesh interfaces {\em including their derivatives up to order $p-m$}, i.e.,
\begin{align}\label{eq:splines}
	V_\coarse := \set{v_\coarse\in\mathbb{Q}^{\bm{p}}(\TT_\coarse) \cap C^{p-m}(\Omega)}{ v_\coarse|_{\partial \Omega}=0}.
\end{align}
Here $m$ takes values between $1$ and $p$. Examples for $m=1$, leading to the maximal smoothness $p-1$, and $p=1,3$ are given in Figure~\ref{fig:bsplines135}; note that for $m=p$, \eqref{eq:splines} and~\eqref{eq:standard_elements} coincide. A support of $\psia$ for the maximal smoothness $p-1$ and $p=5$ is then shown in the left part of Figure~\ref{fig:1st_idea}.
Alternatively, in place of~\eqref{eq:splines}, one can write $V_\coarse = \mathbb{S}^{\p}(\KK_\coarse) \cap H_0^1(\Omega)$, where $\mathbb{S}^{\p}(\KK_\coarse)$ is the spline space with the smoothness $p-m$ encoded in the global knot vector $\KK_\coarse$, see Section~\ref{sec:multivariate splines} below.

Whenever the requested smoothness $p-m$ is greater than $0$, the finite element hat functions $\psi_\ver$ from~\eqref{eq_psia} do not belong to the IGA space $V_\coarse$ given by~\eqref{eq:splines}, being merely continuous. A straightforward adaptation of the flux reconstruction of the previous section would be to employ instead appropriately scaled $\psi_\ver$ from the spline space $\mathbb{Q}^{\bm{p}}(\TT_\coarse) \cap C^{p-m}(\Omega)=\mathbb{S}^{\p}(\KK_\coarse)$, still leading to the partition of unity
\begin{align*}
	\sum_{\ver \in \VV_\coarse} \psi_\ver = 1 \quad \text{in }\Omega,
\end{align*}
where now $\VV_\coarse$ is a suitable node set rather than the set of vertices of the mesh $\TT_\coarse$.
Standard B-splines, see Section~\ref{sec:multivariate splines}, constitute such a partition of unity.
The major inconvenience of such a construction is that the supports $\omega_\ver$ of $\psi_\ver$ (cf.~\eqref{eq_supp}), which appear in the minimization~\eqref{eq:sigma_a}, are much larger for splines. In particular, for the maximal smoothness $p-1$, the support of the corresponding B-splines of degree $p$ consists of $(p+1)^d$ elements, see Figure~\ref{fig:1st_idea}. This would make the dimension of the space $\V_\coarse^{\ver}$ from~\eqref{eq_Vha_FE} and consequently the size of the linear system in~\eqref{eq:sigma_a} grow steeply with $p$, making the equilibration very expensive, see Figure~\ref{fig:1st_idea} for illustration. In addition, in the construction~\eqref{eq:sigma_a},
one needs $\psi_\ver \nabla u_\coarse \in \RT^{\widetilde\p}(\TT_\ver)$ and $f\psi_\ver-\nabla u_\coarse{\cdot}\nabla \psi_\ver \in \mathbb{Q}^{\widetilde\p}(\TT_\ver)$ to avoid all oscillation terms. In the finite element context of Section~\ref{sec:standard_equi}, this is satisfied for the choice $\widetilde\p =\p+\bm{1}$, but it would request $\widetilde\p =2\p$ here. Moreover, it is not clear whether~\eqref{eq:equivalence} would still hold with a constant $C_{\rm eff}$ independent of the polynomial degree $p$ of the considered B-splines.

\begin{figure}[t]
\begin{center}
\includegraphics[width=0.51\textwidth]{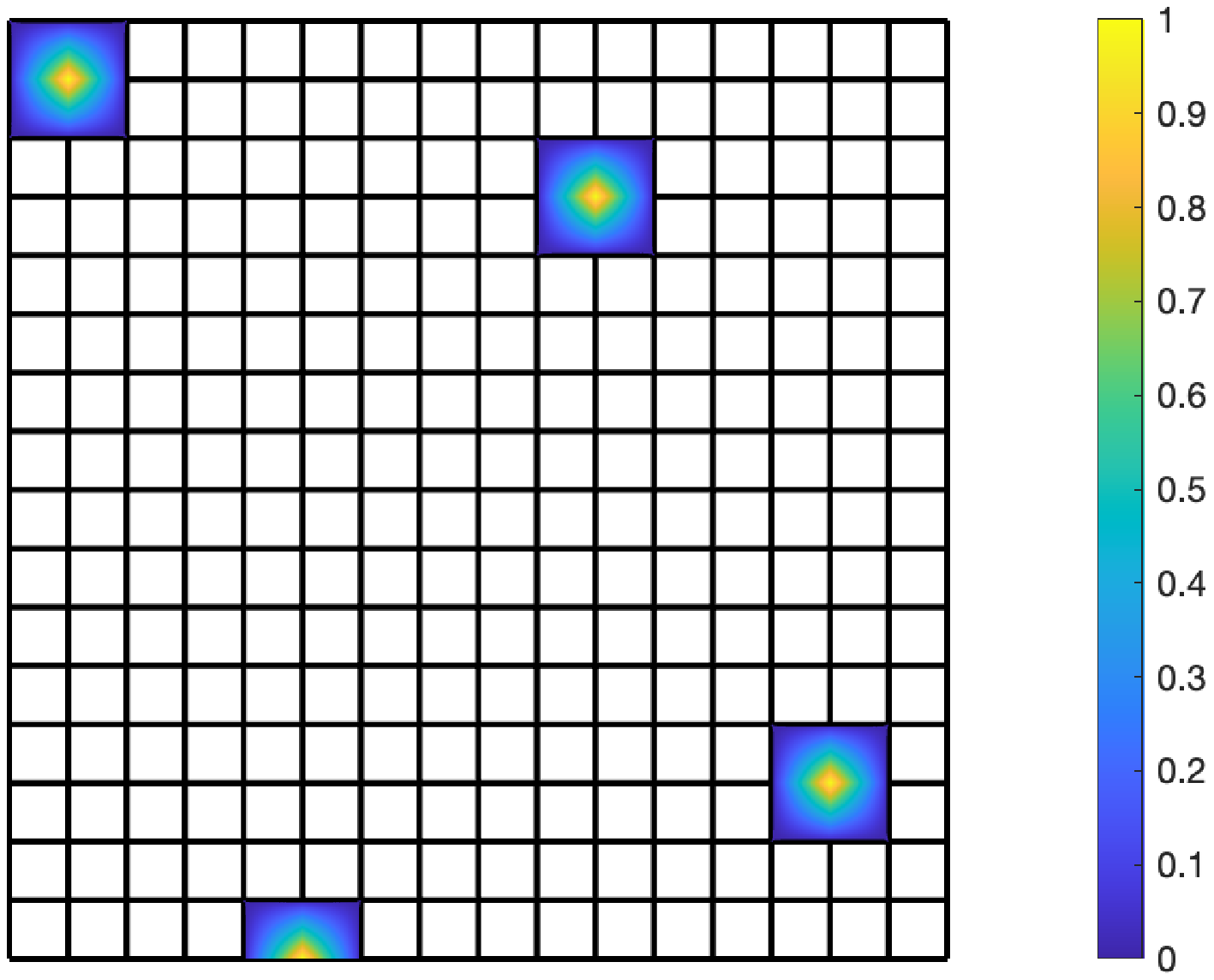}
\hspace{-7.5mm}
\includegraphics[width=0.51\textwidth]{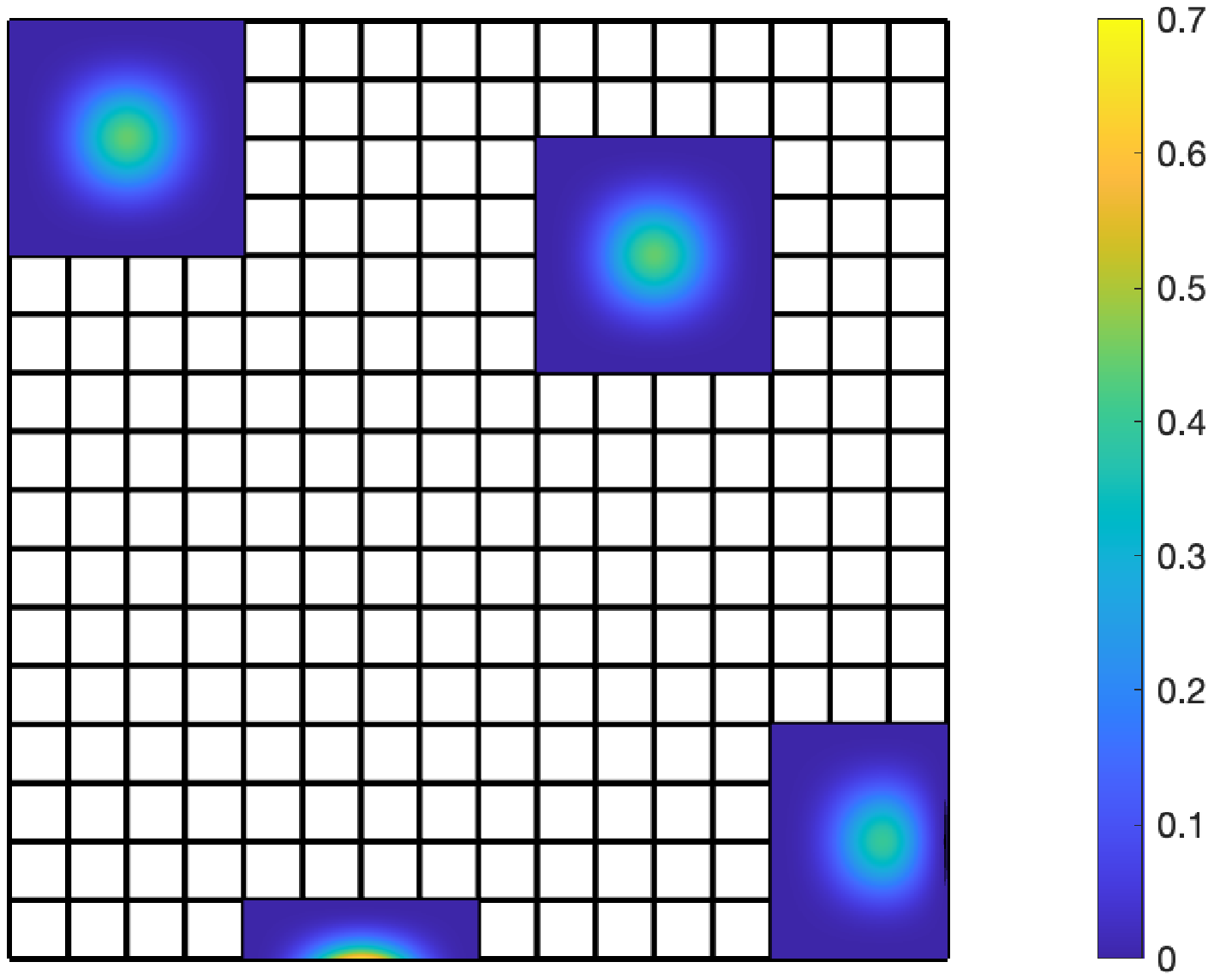}

\vspace{-8mm}
$p=1$ \hspace{70mm} $p=3$

\end{center}
\caption{\label{fig:bsplines135}
B-splines of degree $p=1,3$ and maximal smoothness $p-1$ on a two-dimensional uniform tensor mesh $\TT_\coarse$.}
\end{figure}

\begin{figure}[t]
\begin{center}
\includegraphics[width=0.7\textwidth]{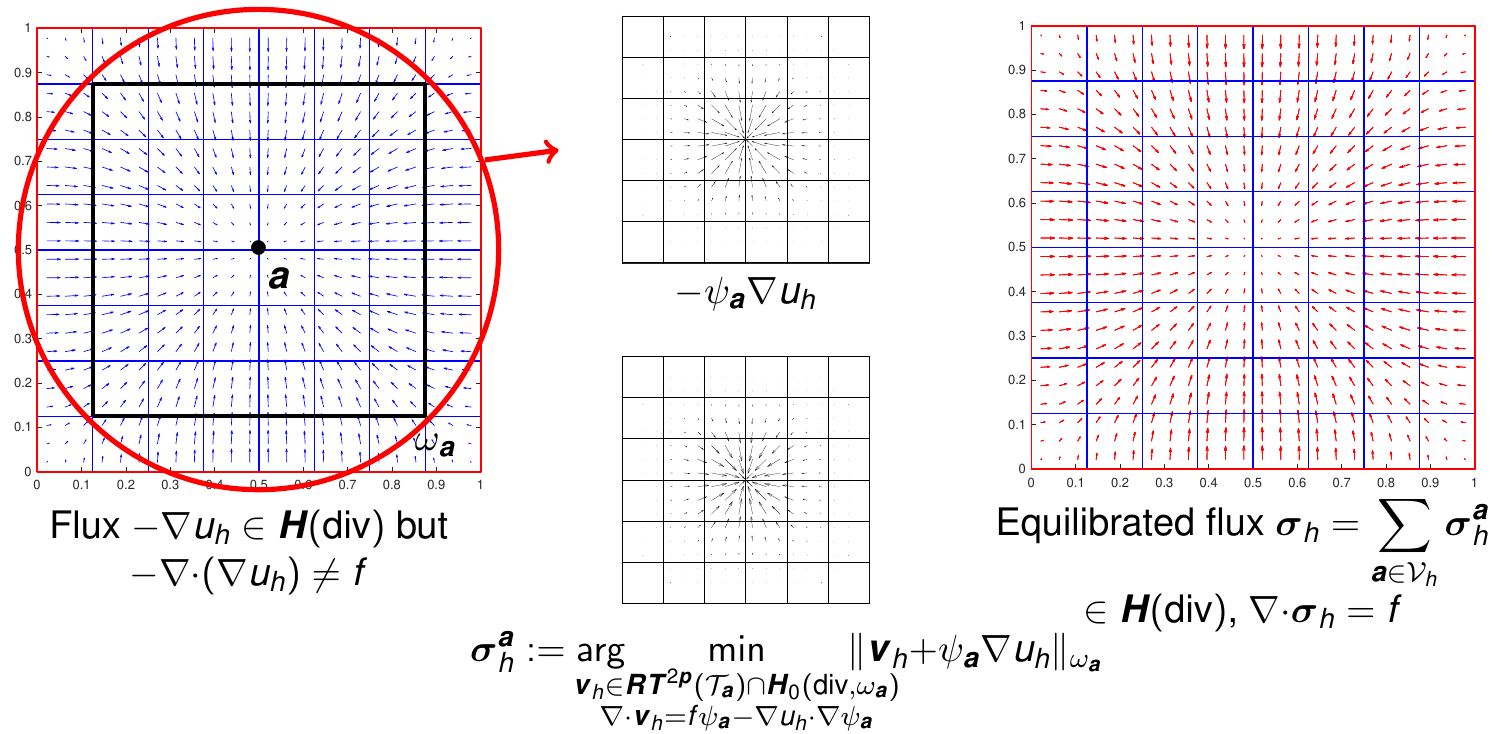}
\end{center}
\caption{A straightforward (expensive) extension of the FEM equilibration to IGA with B-splines of degree $p=5$ and maximal smoothness $p-1$ on a two-dimensional uniform tensor mesh $\TT_\coarse$\label{fig:1st_idea}.}
\end{figure}

\subsection{Inexpensive equilibration in IGA}\label{sec:inex_iga}

Our main idea is to replace the straightforward approach of Section~\ref{sec:standard_iga} by a construction that does not request the expensive solution of~\eqref{eq:sigma_a} employing the large spaces $\V_\coarse^{\ver}$ of~\eqref{eq_Vha_FE} defined over the large spline supports $\omega_\ver$. We design a two-stage procedure where: first, lowest-order $\mathbb{Q}^{\bm{1}}(\TT_\ver) \cap C^0(\oma)$ scalar problems are solved on each large patch $\TT_\ver$; second,  systems of the form~\eqref{eq:sigma_a}, but only over the finite element {\em hat} basis functions supports $\omega_\vertt$ with $2^d$ mesh elements, are solved. This is schematically viewed in Figure~\ref{fig:procedure}.
Similar $p$-robust constructions were designed in~\cite{Sch_Mel_Pech_Zagl_p_prec_08} in the context of an additive Schwarz smoother/preconditiner (just one global lowest-order solve and then $p$-order local patch problems) or in~\cite{Ern_Smears_Voh_H-1_lift_17, Ern_Sme_Voh_heat_HO_Y_17, Pap_Voh_MG_expl_22} for a posteriori error estimates (allowing for fast equilibration with $H^{-1}(\Omega)$ right-hand sides/arbitrary coarsening in parabolic time stepping/inexpensive estimates of the algebraic error).

\begin{figure}[t]
\begin{center}
\includegraphics[width=0.7\textwidth]{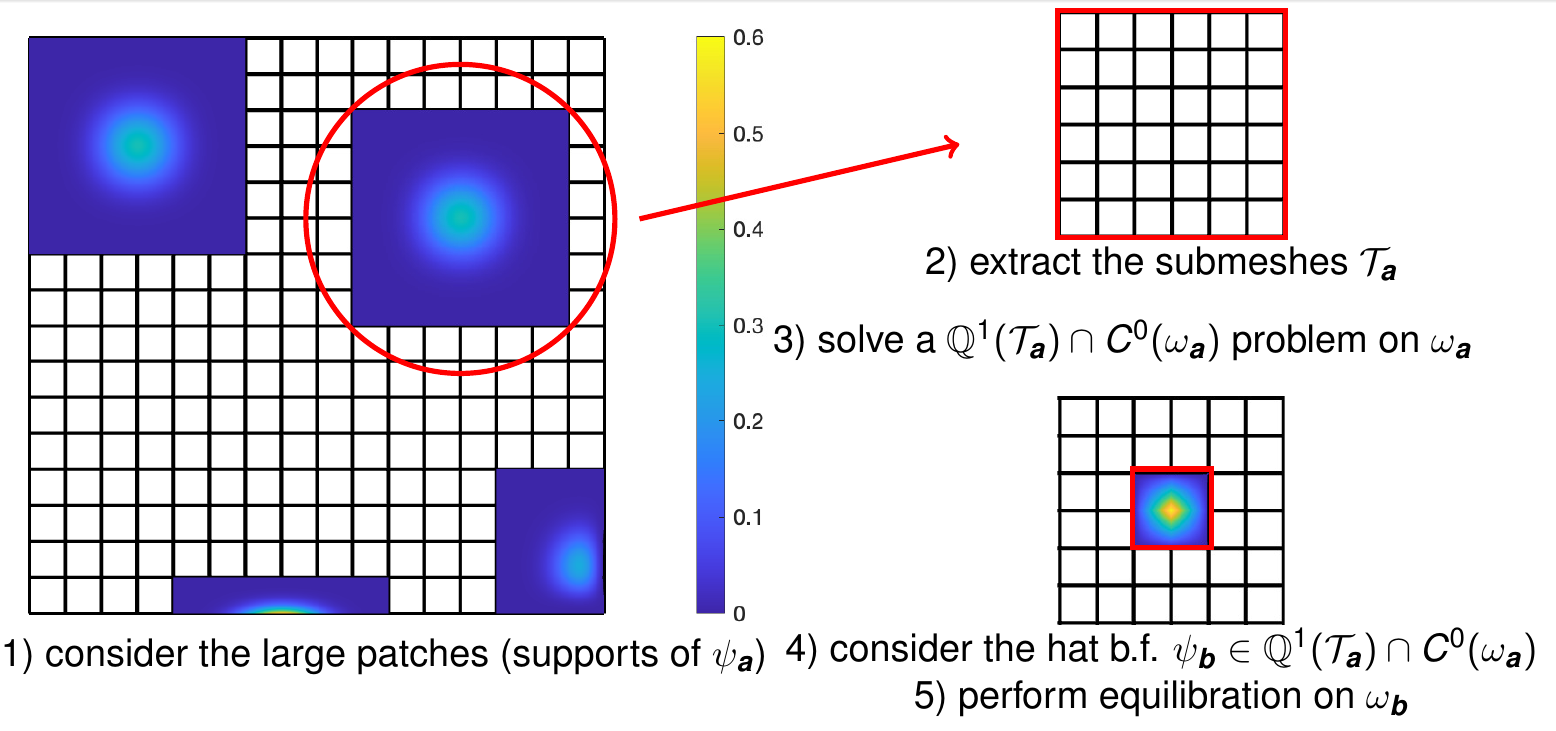}
\end{center}
\caption{Inexpensive equilibration in IGA in the simplified setting of uniform tensor-product mesh, $p=5$ and maximal smoothness $p-1$\label{fig:procedure}.}
\end{figure}

More precisely, we first, on each (large) patch subdomain $\omega_\ver$ with local mesh $\TT_\ver$ solve the primal lowest-order problem: find
\begin{align*}
	r_\coarse^\ver\in V_\coarse^\ver \eq
\begin{cases}
\set{v_\coarse\in \QQ^1(\Ta) \cap C^0(\oma)  }{\dual{v_\coarse}{1}_{\omega_\ver}=0}\quad&\text{if }\psi_\ver\in H_0^1(\Omega),\\
\set{v_\coarse\in \QQ^1(\Ta) \cap C^0(\oma) }{v_\coarse=0\text{ on }
\partial\omega_\ver\setminus\psi_\ver^{-1}(\{0\})}
\quad&\text{else}
\end{cases}
\end{align*}
such that
\begin{align}\label{eq:rh beta_I}
\dual{\nabla r_\coarse^\ver}{\nabla v_\coarse}_{\omega_\ver}
=\dual{f}{v_\coarse \psi_\ver}_{\omega_\ver}- \dual{\nabla u_\coarse}{\nabla(v_\coarse \psi_\ver)}_{\omega_\ver}
\quad\text{for all }v_\coarse\in V_\coarse^\ver,
\end{align}
see Figure~\ref{fig:procedure}, steps 1)--3).
This yields the scalar-valued lifting $r_\coarse^\ver$ of the $\psia$-weighted residual of~\eqref{eq:Galerkin} with respect to~\eqref{eq:variational}. Second, we consider the finite element hat basis functions $\psi_\vertt$ of vertices $\vertt$ of the local mesh $\TT_\ver$ and solve the dual high-order problems
\begin{align}\label{eq:sigma beta b_I}
\ssigma_\coarse^{\ver,\vertt}\eq\argmin{\substack{\bv_\coarse\in\V_\coarse^{\ver,\vertt}\\
\Dv\bv_\coarse=f\psi_\ver \psi_\vertt-\nabla u_\coarse{\cdot}\nabla(\psi_\ver \psi_\vertt) -\nabla r_\coarse^\ver{\cdot}\nabla \psi_\vertt}}
\norm{\bv_\coarse+\psi_\vertt(\psi_\ver \nabla u_\coarse+\nabla r_\coarse^\ver)}{\omega_\vertt},
\end{align}
where $\V_\coarse^{\ver,\vertt}$ is defined similarly to~\eqref{eq_Vha_FE}, with
\begin{align*}
\H_0(\div,\omega_\vertt)\eq
\begin{cases}
\set{\bv\in \H(\div,\omega_\vertt)}{\bv{\cdot}\n_{\omega_\vertt}=0 \text{ on }\partial\omega_\vertt}&\hspace{-2mm}\text{if }\psi_\ver\psi_\vertt\in H_0^1(\Omega),\\
\set{\bv\in \H(\div,\omega_\vertt)}{\bv{\cdot}\n_{\omega_\vertt}=0\text{ on } \partial\omega_\vertt\\ \hspace{5cm} \cap(\psi_\ver\psi_\vertt)^{-1}(\{0\})}&\hspace{-2mm}\text{else},\\
\end{cases}
\end{align*}
 see Figure~\ref{fig:procedure}, steps 4)--5).
As in Section~\ref{sec:standard_iga}, we need to increase the equilibration polynomial degree $\widetilde\p$; actually, owing to the presence of the finite element hat basis function $\psi_\vertt$, we will need $\widetilde\p =2\p + 1$ and not just $\widetilde\p =2\p$ to satisfy
$\psi_\vertt(\psi_\ver \nabla u_\coarse+\nabla r_\coarse^\ver) \in \RT^{\widetilde\p}(\TT_\ver)$ and $f\psi_\ver \psi_\vertt-\nabla u_\coarse{\cdot}\nabla(\psi_\ver \psi_\vertt) -\nabla r_\coarse^\ver{\cdot}\nabla \psi_\vertt \in \mathbb{Q}^{\widetilde\p}(\TT_\ver)$ to avoid all oscillation terms. All these local problems can be solved independently one from each other, allowing for efficient parallelization.
Moreover, for every two same patch geometries, one matrix assembly and factorization is sufficient.

To finish, we sum the contributions $\ssigma_\coarse^{\ver,\vertt}$ from~\eqref{eq:sigma beta b_I} as
\begin{align}\label{eq_flux_a_I}
 \ssigma_\coarse^\ver\eq\sum_{\vertt\in\VV_\coarse^\ver} \ssigma_\coarse^{\ver,\vertt}
\end{align}
and form the final equilibrated flux as
\begin{align}\label{eq_flux_tot_I}
\ssigma_\coarse\eq\sum_{\ver\in\VV_\coarse}\ssigma_\coarse^\ver,
\end{align}
which yields a fully computable guaranteed (constant-free in the leading term) upper bound on the unknown error similarly to~\eqref{eq:equivalence}, see Proposition~\ref{prop:reliable} for details. This bound is also locally and globally efficient, see respectively
Propositions~\ref{prop:efficient} and~\ref{prop:efficiency}. The involved constant $C_{\rm eff}$ is robust with respect to the polynomial degree $p$ of the used hierarchical splines and the number of hanging nodes of the underlying hierarchical meshes, but theoretically not with respect to the smoothness of the splines and the number of patches $\omega_\ver$ overlapping in a point (see~\eqref{eq:overlap}) (in our numerical experiments, though, $C_{\rm eff}$ shows robustness with respect to the smoothness and the number of overlapping patches).
Full details of the equilibration are given in Definitions~\ref{def_ra}, \ref{def_flux_ab}, and~\ref{def_flux_a} below; Figure~\ref{fig:partition} illustrates the procedure on hierarchical meshes.

\section{Hierarchical splines} \label{sec_splines}

In this section, we first describe the piecewise polynomial space of multivariate splines in the parameter domain $\widehat\Omega$. We then introduce its hierarchical extension, covering highly graded local mesh refinement. Finally, the space $V_\coarse$ from~\eqref{eq:Galerkin} will be given by the transformation of the latter one by the mapping $\F$.

\subsection{Multivariate splines in the parameter domain $(0,1)^d$}
\label{sec:multivariate splines}
We recall here the standard definition of multivariate splines in the parameter domain $(0,1)^d$; for a detailed introduction, we refer, e.g., to~\cite{db86,db01,sch07}.

Let the integer $p\geq 1$ be a fixed positive  polynomial degree and let
\begin{align*}
{\KK}_\coarse=(\KK_{1(\coarse)},\dots,\KK_{d(\coarse)})
\end{align*} be a fixed  $d$-dimensional vector of $p$-open knot vectors,  i.e., for each spatial dimension $1\leq i \leq d$,
\begin{align*}
\KK_{i(\coarse)}=(t_{i(\coarse),0},\dots,t_{i(\coarse),N_{i(\coarse)}+p})
\end{align*}
is a \emph{$p$-open knot vector} in $[0,1]$, which means that
\begin{align*}
0=t_{i(\coarse),0}=\dots=t_{i(\coarse),p} < t_{i(\coarse),p+1} \le t_{i(\coarse),p+2} \dots < t_{i(\coarse),N_{i(\coarse)}}=\dots =t_{i(\coarse),N_{i(\coarse)}+p}=1.
\end{align*}
Moreover, we assume that each of the interior knots $t_{i(\coarse),j}\in (0,1)$ appears at most with multiplicity $p$.
By definition, the boundary knots $0$ and $1$ have multiplicity $p+1$.
An example is given in Figure~\ref{fig:bsplines}, with polynomial degree $p=2$, number of knots minus $p$ minus one $=$ $N_{i(\coarse)}=9$ (this will later correspond to the dimension of the B-splines space), and a varying multiplicity.

We define the resulting one-dimensional meshes
\begin{align*}
\widehat\TT_{i(\coarse)}\eq\set{[t_{i(\coarse),j-1},t_{i(\coarse),j}]}{j\in\{1,\dots,N_{i(\coarse)}+p\}\wedge t_{i(\coarse),j-1}<t_{i(\coarse),j}},
\quad i=1,\dots,d,
\end{align*}
as well as the resulting \emph{tensor mesh}
\begin{align*}
\widehat\TT_{\coarse}\eq\set{\widehat T_1\times\dots\times \widehat T_d}{\widehat T_i\in \widehat\TT_{i(\coarse)}\text{ for all }i\in\{1,\dots,d\}}.
\end{align*}

By ${\mathbb{S}}^{p}(\KK_{i(\coarse)})$, we denote the set of all corresponding \emph{univariate splines}, i.e., the set of all $\widehat\TT_{i(\coarse)}$-piecewise univariate polynomials of degree $p$ that are $p-\#t_{i(\coarse),j}$-times continuously differentiable at any interior knot $t_{i(\coarse),j}$, where $\#t_{i(\coarse),j}$ denotes the corresponding multiplicity of $t_{i(\coarse),j}$ within $\KK_{i(\coarse)}$.
Assuming for example that all interior multiplicities are equal to $p$, ${\mathbb{S}}^{p}(\KK_{i(\coarse)})$ is just the space of all $\widehat\TT_{i(\coarse)}$-piecewise univariate polynomials of degree $p$ that are merely continuous, i.e., $C^0(0,1)$, cf.~\eqref{eq:standard_elements}.
At the other extreme, when all interior multiplicities are equal to $1$, then ${\mathbb{S}}^{p}(\KK_{i(\coarse)})$ is the space of $\widehat\TT_{i(\coarse)}$-piecewise univariate polynomials of degree $p$ with continuous derivatives up to order $p-1$, cf.~\eqref{eq:splines} with $m=1$.
We refer again to Figure~\ref{fig:bsplines} for an example.

A basis of ${\mathbb{S}}^{p}(\KK_{i(\coarse)})$, of dimension $N_{i(\coarse)}$, is given by the set of \emph{B-splines}
\begin{align*}
\set{B_{i(\coarse),j,p}}{j\in\{1,\dots,N_{i(\coarse)}\}},
\end{align*}
where the B-splines $B_{i(\coarse),j,p}$ are recursively defined for all points $t\in(0,1)$  via
\begin{subequations}\label{eq_Bsplines}
\begin{align}
B_{i(\coarse),j,0}(t)\eq
\begin{cases}
1\quad&\text{if }t_{i(\coarse),j}\le t<t_{i(\coarse),j+1},\\
0\quad&\text{else},
\end{cases}
\end{align}
and
\begin{align}
B_{i(\coarse),j,p}(t)\eq \frac{t-t_{i(\coarse),j}}{t_{i(\coarse),j+p}-t_{i(\coarse),j}} B_{i(\coarse),j,p-1}(t) + \frac{t_{i(\coarse),j+p+1}-t}{t_{i(\coarse),j+p+1}-t_{i(\coarse),j+1}} B_{i(\coarse),j+1,p-1}(t)
\end{align}
\end{subequations}
with the formal convention that $\cdot/0\eq0$. Figure~\ref{fig:bsplines} gives an illustrative example, where the $N_{i(\coarse)}=9$ basis functions are depicted.
It is easy to see that the support is
\begin{align}\label{eq:B-spline support}
\supp(B_{i(\coarse),j,p})=[t_{i(\coarse),j},t_{i(\coarse),j+p+1}]
\end{align}
and we also remark that
\begin{align} \label{eq_scaling}
0 \leq B_{i(\coarse),j,p} \leq 1.
\end{align}

\begin{figure}[t]
\begin{center}
\includegraphics[width=0.65\textwidth]{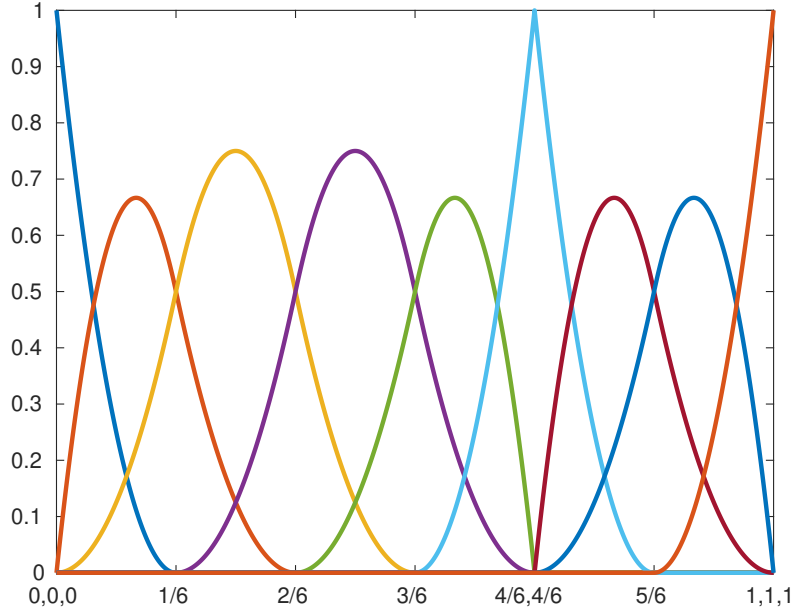}
\end{center}
\caption{\label{fig:bsplines}
The B-splines of degree $p=2$ corresponding to the knot vector $\KK=(0,0,0,1/6,2/6,3/6,4/6,4/6,5/6,1,1,1)$ are depicted.
They are at least $C^1$ at the knots $1/6,2/6,3/6,$ and $5/6$, and at least $C^0$ at $4/6$.
}
\end{figure}

We abbreviate $\p\eq(p,\dots,p)\in\N^d$.
The space  ${\mathbb{S}}^{\p}(\KK_\coarse)$ of \emph{multivariate splines} is defined as tensor-product of the univariate spline spaces.
Note that each function in ${\mathbb{S}}^{\p}(\KK_\coarse)$ is a $\widehat\TT_\coarse$-piecewise multivariate polynomial of degree $\p$.
Again, assuming for example that all interior knots have multiplicity $p$,  ${\mathbb{S}}^{\p}(\KK_\coarse)$ is just the space of all continuous $\widehat\TT_\coarse$-piecewise multivariate  polynomials of degree $\p$, i.e., up to boundary conditions, \eqref{eq:standard_elements}, while multiplicity $1$ of all interior knots yields the space of all $\widehat\TT_\coarse$-piecewise multivariate  polynomials of degree $\p$ with continuous derivatives up to order $p-1$, i.e., up to boundary conditions, \eqref{eq:splines} for $m=1$.
Clearly, the set of tensor-products of the univariate B-splines
\begin{align}\label{eq:multivariate basis}
\set{B_{\coarse,\bm{j},\p}}{\bm{j}\in\Pi_{i=1}^d\{1,\dots,N_{i(\coarse)}\}}
\quad\text{with}\quad
B_{\coarse,\bm{j},\p}(\bm{t})\eq \Pi_{i=1}^{d} B_{i(\coarse),j_i,p}(t_i),
\end{align}
provides a basis of ${\mathbb{S}}^{\p}(\KK_\coarse)$.

\subsection{Hierarchical splines in the parameter domain $(0,1)^d$}
\label{sec:hierarchical splines parameter}
We now introduce hierarchical splines, which are defined on a hierarchical mesh and are essentially coarse splines on coarse mesh elements and fine splines on fine mesh elements. For a detailed introduction, we refer, e.g., to the seminal work~\cite{vgjs11}.

Let $p$ be a fixed positive  polynomial degree as above and let
${\KK}_0=(\KK_{1(0)},\dots,\KK_{d(0)})$ be a fixed initial $d$-dimensional vector of $p$-open knot vectors in $[0,1]$ with interior multiplicities  less than or equal to $p$, as in Section~\ref{sec:multivariate splines}. Recall that we consider $d =2,3$.
We set $\KK_{\uni{0}}\eq\KK_0$ and  recursively define  $\KK_{\uni{\ell+1}}$ for $\ell\in\N_0$ as the uniform $h$-refinement of $\KK_{\uni{\ell}}$ with fixed multiplicity $m\in\N$, i.e., obtained by inserting the knot $(t_{i(\uni{\ell}),j-1}+t_{i(\uni{\ell}),j})/{2}$ to the knots $\KK_{i(\uni{\ell})}$ with multiplicity $m$ whenever $t_{i(\uni{\ell}),j-1}<t_{i(\uni{\ell}),j}$.
We use analogous notation as in Section~\ref{sec:multivariate splines}, replacing the index $h$ by $\uni{\ell}$, e.g., we write $\widehat\TT_{\uni{\ell}}$ for the induced tensor mesh.
We stress that these spline spaces are nested in the sense that
\begin{align}
 {\mathbb{S}}^{\p}(\KK_{\uni{\ell}})\subset{\mathbb{S}}^{\p}(\KK_{\uni{\ell+1}})\subset C^{0}((0,1)^d),
\end{align}
where the last relation follows from the assumption that multiplicities of interior knots is less than or equal to $p$.

\begin{remark}
As a matter of fact, the definition of the uniform refinements $\KK_{\uni{\ell}}$ allows for a certain flexibility without changing the results of the present manuscript.
For instance, instead of the natural dyadic uniform refinement, one could use $n$-adic refinement, i.e., insert the knots $(t_{i(\uni{\ell}),j-1}+t_{i(\uni{\ell}),j})/{n},\dots,(n-1)(t_{i(\uni{\ell}),j-1}+t_{i(\uni{\ell}),j})/{n}$ to the knots $\KK_{i(\uni{\ell})}$ with multiplicity $m$ whenever $t_{i(\uni{\ell}),j-1}<t_{i(\uni{\ell}),j}$.
\end{remark}

We say that a set
\begin{align*}
\widehat\TT_\coarse\subseteq\bigcup_{\ell\in\N_0}\widehat \TT_{\uni{\ell}}
\end{align*}
 is a \textit{hierarchical mesh} if it is a partition of $[0,1]^d$ in the sense that $\bigcup\widehat \TT_\coarse=[0,1]^d$, where the intersection of  two different elements $\widehat T\neq \widehat T'$ with $\widehat T,\widehat T'\in\widehat\TT_\coarse$ has ($d$-dimensional) measure zero.
Since $\widehat\TT_{\uni{\ell}}\cap\widehat\TT_{\uni{\ell'}}=\emptyset$ for $\ell,\ell'\in\N_0$ with $\ell\neq \ell'$, we can define for any element $\widehat T\in\widehat\TT_\coarse$,
\begin{align*}
\level(\widehat T)\eq\ell\in\N_0 \quad\text{with }\widehat T\in\widehat\TT_{\uni{\ell}}.
\end{align*}
 For an illustrative example of a hierarchical mesh, see Figure~\ref{fig:hiermesh}.
In particular, any uniformly refined tensor mesh $\widehat\TT_{\uni{\ell}}$ with $\ell\in\N_0$ is a hierarchical mesh.

For a hierarchical mesh $\widehat\TT_\coarse$, we define a corresponding nested sequence $(\widehat\Omega^\ell_\coarse)_{\ell\in\N_0}$ of closed subsets of $[0,1]^d$ as
\begin{align*}
\widehat\Omega_\coarse^\ell\eq\bigcup_{\ell'\ge \ell}\big(\widehat \TT_\coarse\cap\widehat\TT_{\uni{\ell'}}\big),
\end{align*}
i.e., $\widehat\Omega_\coarse^\ell$ covers all elements with level greater than or equal to $\ell$.
Note that there exists a minimal integer $L_\coarse$ such that $\widehat\Omega_\coarse^{\ell}=\emptyset$ for all $\ell\ge L_\coarse$.
It holds that
\begin{align}\label{eq:parameter mesh}
\widehat\TT_\coarse=\bigcup_{\ell\in\N_0}\set{\widehat T\in\widehat\TT_{\uni{\ell}}}{\widehat T\subseteq \widehat \Omega_\coarse^\ell\wedge \widehat T\not\subseteq\widehat\Omega_\coarse^{\ell+1}}.
\end{align}
In the literature, one usually assumes that the sequence $(\widehat\Omega^\ell_\coarse)_{\ell\in\N_0}$ is given and the corresponding hierarchical mesh is defined via~\eqref{eq:parameter mesh}.

\begin{figure}
\begin{center}
\begin{tikzpicture}[scale=7.5]
\draw[line width = 0.3mm, color = black]  (0,0) -- (1,0) -- (1,1) -- (0,1) -- cycle;
\draw[color = white, fill=gray!30]  (0,0) -- (3/8,0) -- (3/8,3/8) -- (0,3/8) -- cycle;
\draw[color = white, fill=gray!30]  (1/2,1/2) -- (1,1/2) -- (1,1) -- (1/2,1) -- cycle;
\draw[color = white, fill=gray!30]  (0,3/4) -- (1/8,3/4) -- (1/8,1) -- (0,1) -- cycle;
\draw[color = white, fill=gray!30]  (0,3/8) -- (1/8,3/8) -- (1/8,1/2) -- (0,1/2) -- cycle;
\draw[color = white, fill=gray!55]  (0,0) -- (1/16,0) -- (1/16,1/8) -- (0,1/8) -- cycle;
\draw[color = white, fill=gray!55]  (13/16,13/16) -- (1,13/16) -- (1,1) -- (13/16,1) -- cycle;
\draw[color = white, fill=gray!55]  (0,15/16) -- (1/16,15/16) -- (1/16,1) -- (0,1) -- cycle;
\draw[color = white, fill=gray!100]  (31/32,31/32) -- (1,31/32) -- (1,1) -- (31/32,1) -- cycle;
\foreach \i in {0,...,8}{
\draw[line width=0.3mm, color=black] (0,\i/8) -- (1,\i/8);
\draw[line width=0.3mm, color=black] (\i/8,0) -- (\i/8,1);
}
\foreach \i in {0,...,6}{
\draw[line width=0.3mm, color=black] (0,\i/16) -- (3/8,\i/16);
\draw[line width=0.3mm, color=black] (\i/16,0) -- (\i/16,3/8);
}
\foreach \i in {8,...,16}{
\draw[line width=0.3mm, color=black] (1/2,\i/16) -- (1,\i/16);
\draw[line width=0.3mm, color=black] (\i/16,1/2) -- (\i/16,1);
}
\foreach \i in {0,...,2}{
\draw[line width=0.3mm, color=black] (0,\i/16+14/16) -- (1/8,\i/16+14/16);
\draw[line width=0.3mm, color=black] (0,\i/16+12/16) -- (1/8,\i/16+12/16);
\draw[line width=0.3mm, color=black] (\i/16,3/4) -- (\i/16,1);
}
\foreach \i in {0,...,2}{
\draw[line width=0.3mm, color=black] (0,\i/16+6/16) -- (1/8,\i/16+6/16);
\draw[line width=0.3mm, color=black] (\i/16,3/8) -- (\i/16,4/8);
}
\foreach \i in {0,...,2}{
\draw[line width=0.3mm, color=black] (0,\i/32) -- (1/16,\i/32);
\draw[line width=0.3mm, color=black] (0,\i/32+2/32) -- (1/16,\i/32+2/32);
\draw[line width=0.3mm, color=black] (\i/32,0) -- (\i/32,2/16);
}
\foreach \i in {26,...,32}{
\draw[line width=0.3mm, color=black] (13/16,\i/32) -- (1,\i/32);
\draw[line width=0.3mm, color=black] (\i/32,13/16) -- (\i/32,1);
}
\foreach \i in {0,...,2}{
\draw[line width=0.3mm, color=black] (0,\i/32+30/32) -- (1/16,\i/32+30/32);
\draw[line width=0.3mm, color=black] (\i/32,15/16) -- (\i/32,1);
}
\foreach \i in {62,...,64}{
\draw[line width=0.3mm, color=black] (62/64,\i/64) -- (1,\i/64);
\draw[line width=0.3mm, color=black] (\i/64,62/64) -- (\i/64,1);
}
\end{tikzpicture}

\caption{\label{fig:hiermesh}
A two-dimensional hierarchical mesh $\widehat\TT_\coarse$ with level of all elements less than $4$. Levels $0$ to $3$ are respectively highlighted in white, light grey, grey, and dark grey, also denoting the corresponding  domains $\widehat\Omega^0_\coarse\setminus\widehat\Omega^1_\coarse$, $\widehat\Omega^1_\coarse\setminus\widehat\Omega^2_\coarse$,
$\widehat\Omega^2_\coarse\setminus\widehat\Omega^3_\coarse$, $\widehat\Omega^3_\coarse\setminus\widehat\Omega^4_\coarse$; $\widehat\Omega_\coarse^4=\emptyset$.}
\end{center}
\end{figure}
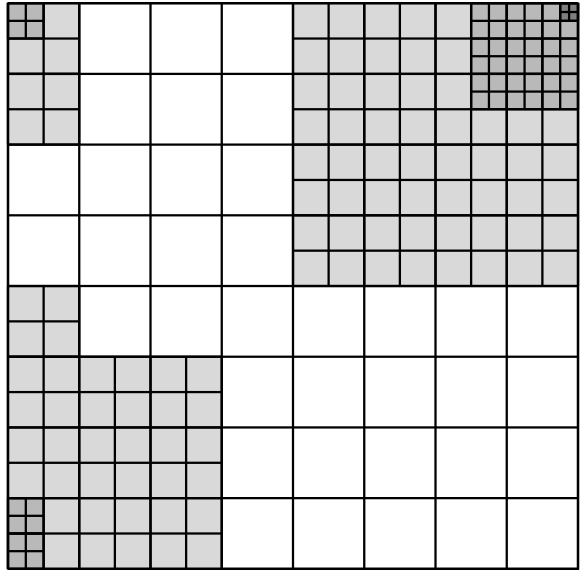

We introduce the \textit{hierarchical basis}
\begin{align}\label{eq:short cHH}
\set{B_\ver}{\ver\in\II_\coarse} \quad\text{with }\quad \II_\coarse\eq\bigcup_{\ell\in\N_0}\Big\{&(\uni{\ell},\bm{j},\p) : \bm{j}\in\Pi_{i=1}^d\{1,\dots,N_{i(\uni{\ell})}\}\\
&\wedge\supp(B_{\uni{\ell},\bm{j},\p})\subseteq\widehat\Omega_\coarse^\ell\wedge\supp(B_{\uni{\ell},\bm{j},\p})\not\subseteq\widehat\Omega_\coarse^{\ell+1}\Big\},\notag
\end{align}
where we recall the definition~\eqref{eq:multivariate basis} of a multivariate B-spline. Figure~\ref{fig:partition} gives an illustrative example.
Its elements are referred to as (multivariate) \textit{hierarchical} \textit{B-splines}.
For $\ver\in\II_\coarse$, the level  of the corresponding hierarchical B-spline is well defined
\begin{align}\label{eq:level B}
\level(B_\ver)\eq\ell\in\N_0 \quad\text{with } \ver=(\uni{\ell},\bm{j},\p).
\end{align}
It is easy to check that if $\widehat \TT_\coarse$ is a tensor mesh, and hence coincides with some $\widehat\TT_{\uni{\ell}}$, then the hierarchical basis and the standard tensor-product B-spline basis are the same.
One can prove that the hierarchical B-splines
are linearly independent; see, e.g., \cite[Lemma~2]{vgjs11}.
They span the space of \textit{hierarchical splines}
\begin{align}\label{eq:hierarchical_splines}
{\mathbb{S}}^{\p}(\KK_0,m,\widehat\TT_\coarse)\eq{\rm span}\big(\set{B_\ver}{\ver\in\II_\coarse}\big).
\end{align}

\begin{figure}
\begin{center}
\begin{tikzpicture}[scale=4.6]
\draw[line width = 0.3mm, color = black]  (0,0) -- (1,0) -- (1,1) -- (0,1) -- cycle;
\foreach \i in {0,...,2}{
\draw[line width=0.15mm,color=gray!50] (1/4,\i/16) -- (5/8,\i/16);
\draw[line width=0.15mm,color=gray!50] (1/4+\i/16,0) -- (1/4+\i/16,1/8);
\draw[line width=0.15mm,color=gray!50] (3/8+\i/16,0) -- (3/8+\i/16,1/8);
\draw[line width=0.15mm,color=gray!50] (1/2+\i/16,0) -- (1/2+\i/16,1/8);
}
\foreach \i in {0,...,6}{
\draw[line width=0.15mm,color=gray!50] (11/16,\i/32+11/16) -- (14/16,\i/32+11/16);
\draw[line width=0.15mm,color=gray!50] (\i/32+11/16,11/16) -- (\i/32+11/16,14/16);
}
\foreach \i in {0,...,12}{
\draw[line width=0.15mm,color=gray!50] (0,\i/32+20/32) -- (3/8,\i/32+20/32);
\draw[line width=0.15mm,color=gray!50] (\i/32,5/8) -- (\i/32,1);
}
\foreach \i in {0,...,8}{
\draw[line width=0.3mm] (0,\i/8) -- (1,\i/8);
\draw[line width=0.3mm] (\i/8,0) -- (\i/8,1);
}
\foreach \i in {0,...,6}{
\draw[line width=0.3mm] (0,\i/16) -- (3/8,\i/16);
\draw[line width=0.3mm] (\i/16,0) -- (\i/16,3/8);
}
\foreach \i in {8,...,16}{
\draw[line width=0.3mm] (1/2,\i/16) -- (1,\i/16);
\draw[line width=0.3mm] (\i/16,1/2) -- (\i/16,1);
}
\foreach \i in {0,...,2}{
\draw[line width=0.3mm] (0,\i/16+14/16) -- (1/8,\i/16+14/16);
\draw[line width=0.3mm] (0,\i/16+12/16) -- (1/8,\i/16+12/16);
\draw[line width=0.3mm] (\i/16,3/4) -- (\i/16,1);
}
\foreach \i in {0,...,2}{
\draw[line width=0.3mm] (0,\i/16+6/16) -- (1/8,\i/16+6/16);
\draw[line width=0.3mm] (\i/16,3/8) -- (\i/16,4/8);
}
\foreach \i in {0,...,2}{
\draw[line width=0.3mm] (0,\i/32) -- (1/16,\i/32);
\draw[line width=0.3mm] (0,\i/32+2/32) -- (1/16,\i/32+2/32);
\draw[line width=0.3mm] (\i/32,0) -- (\i/32,2/16);
}
\foreach \i in {26,...,32}{
\draw[line width=0.3mm] (13/16,\i/32) -- (1,\i/32);
\draw[line width=0.3mm] (\i/32,13/16) -- (\i/32,1);
}
\foreach \i in {0,...,2}{
\draw[line width=0.3mm] (0,\i/32+30/32) -- (1/16,\i/32+30/32);
\draw[line width=0.3mm] (\i/32,15/16) -- (\i/32,1);
}
\foreach \i in {62,...,64}{
\draw[line width=0.3mm] (62/64,\i/64) -- (1,\i/64);
\draw[line width=0.3mm] (\i/64,62/64) -- (\i/64,1);
}
\draw[line width=1.0mm,color = blue] (5/8,0) -- (5/8,1/8) -- (2/8,1/8) -- (2/8,0) -- cycle;
\draw[line width=1.0mm,color = blue]  (5/8+1/16,5/8+1/16) -- (14/16,5/8+1/16) -- (14/16,14/16) -- (5/8+1/16,14/16)
-- cycle;
\draw[line width=1.0mm,color = blue]  (0,5/8) -- (3/8,5/8) -- (3/8,1) -- (0,1) -- cycle;
\end{tikzpicture}

(a)

\includegraphics[width=0.51\textwidth]{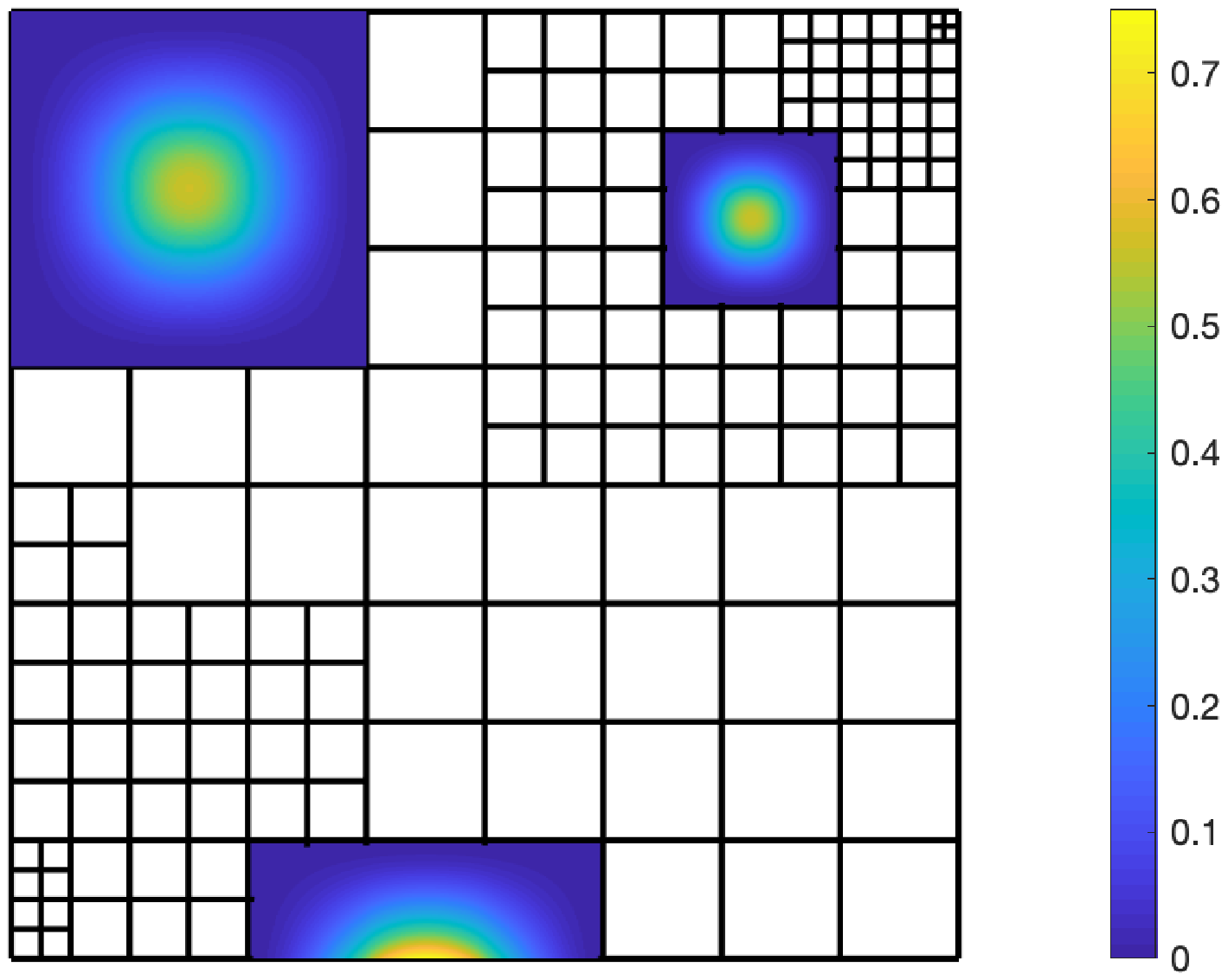}
\hspace{-7.5mm}
\includegraphics[width=0.51\textwidth]{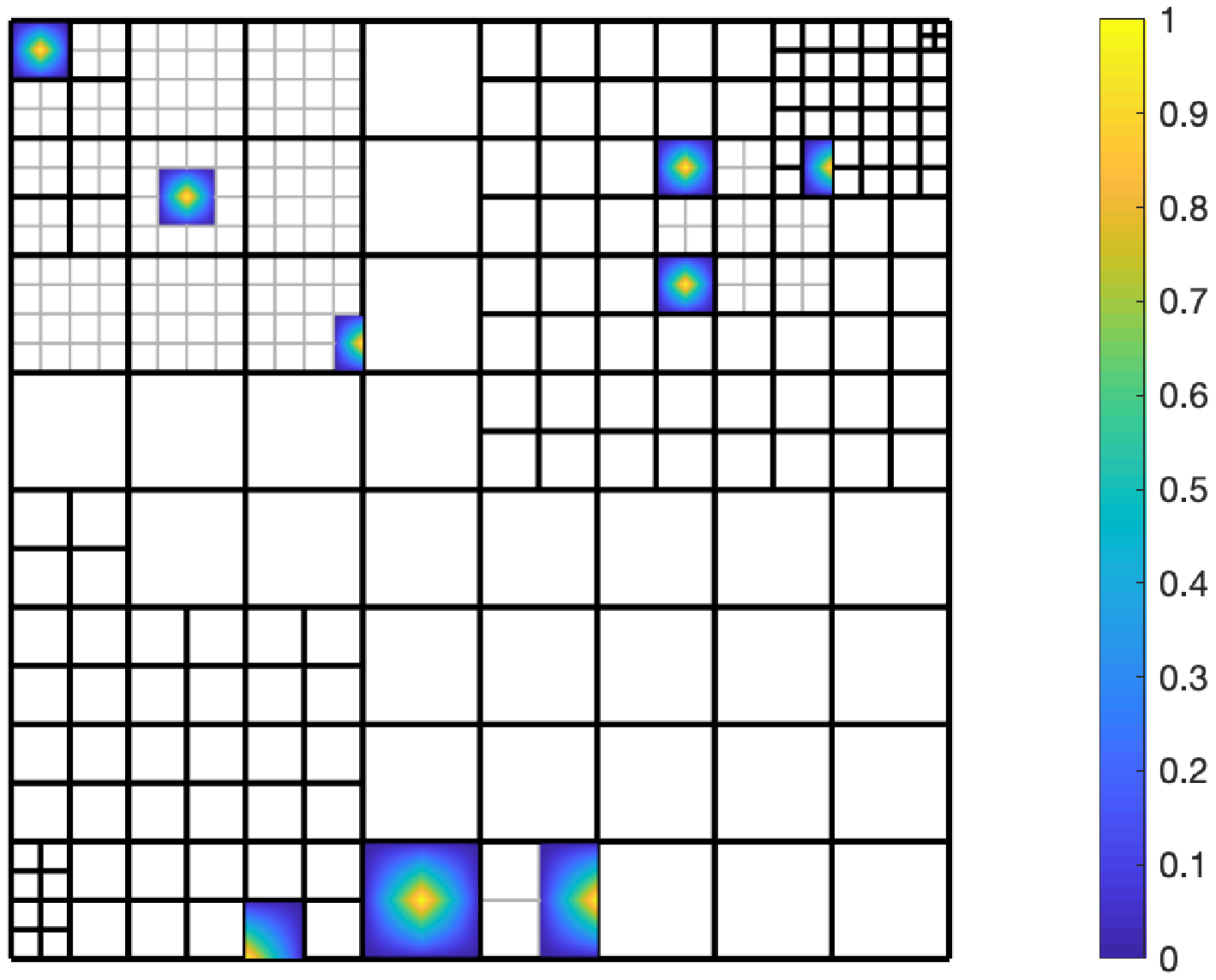}

\vspace{-8mm}
(b) \hspace{70mm} (c)

\caption{\label{fig:partition}
A two-dimensional hierarchical mesh $\widehat\TT_\coarse$ (of Figure~\ref{fig:hiermesh}) is depicted in black.
Assuming that all interior knots have multiplicity $1$, in (a), the support of three hierarchical B-splines of degree $p=2$ is highlighted in blue; two hierarchical B-splines of level $0$ (left and bottom), and one hierarchical B-spline of level $1$ (right).
The corresponding local tensor meshes $\widehat\TT_\ver$ are indicated in grey.
In (b), these three hierarchical B-splines are depicted. For $\overline p=2$, these functions also belong (up to scaling) to the partition of unity $\set{\psi_\ver\circ\F}{\ver\in\VV_\coarse}$ defined in Section~\ref{sec:hierarchical partitions} below.
Finally, in (c), for each of them, three functions of the corresponding local partition of unity $\set{\psi_\vertt\circ \F}{\vertt\in \VV_\coarse^\ver}$ defined in Section~\ref{sec:hierarchical partitions} below over the local tensor meshes are depicted.}
\end{center}
\end{figure}

The hierarchical basis
and the mesh $\widehat\TT_\coarse$ are compatible in the following sense:
For all $B_\ver$, $\ver\in\II_\coarse$, the corresponding support can be written as union of elements in $\widehat\TT_{\uni{\level(B_\ver)}}$, i.e.,
\begin{align*}
\supp(B_\ver)=\bigcup\set{\widehat T\in\widehat\TT_{\uni{\level(B_\ver)}}}{\widehat T\subseteq\supp(B_\ver)}.
\end{align*}
Each such element $\widehat T\in\widehat\TT_{\uni{\level(B_\ver)}}$ with $\widehat T\subseteq\supp(B_\ver)\subseteq \widehat\Omega_\coarse^{\level(B_\ver)}$ satisfies that $\widehat T\in\widehat\TT_\coarse$ or $\widehat T\subseteq\widehat\Omega_\coarse^{\level(B_\ver)+1}$.
In either case, we see that $\widehat T$
can be written as union of elements in $\widehat\TT_\coarse$ with level greater or equal to $\level(B_\ver)$.
Altogether, we have that
\begin{align}\label{eq:supp elements}
\supp(B_\ver)=\bigcup_{\ell\ge \level(B_\ver)}\set{\widehat T\in\widehat\TT_\coarse\cap\widehat\TT_{{\rm uni}(\ell)}}{\widehat T\subseteq\supp(B_\ver)}.
\end{align}
Moreover, $\supp(B_\ver)$ must contain at least one element of $\level(B_\ver)$.
Otherwise one would get the contradiction $\supp(B_\ver)\subseteq\widehat\Omega_\coarse^{\level(B_\ver)+1}$.

We now present the following characterization of hierarchical splines, which has been verified  in \cite[Section~3]{sm16},
\begin{align}\label{eq:hier in pardom defined}
\begin{split}
{\mathbb{S}}^{\p}(\KK_0,m,\widehat\TT_\coarse)=\big\{\widehat v_\coarse:(0,1)^d\to\R:\widehat v_\coarse|_{(0,1)^d\setminus \widehat\Omega_\coarse^{\ell+1}}\in  \mathbb{S}^{\p}(\KK_{\uni{\ell}})|_{(0,1)^d\setminus \widehat\Omega_\coarse^{\ell+1}} \text{ for all }\ell\in \N_0\big\}.
\end{split}
\end{align}
In particular, each hierarchical spline is a $\widehat{\TT}_\coarse$-piecewise tensor-product polynomial of degree $\p$.
Put into words, hierarchical splines are coarse splines on coarse mesh elements, and they are fine splines on fine mesh elements.

Finally, we say that a hierarchical mesh $\widehat\TT_\coarse$ is \textit{finer} than another hierarchical mesh $\widehat\TT_H$ if $\widehat\TT_\coarse$ is obtained from $\widehat\TT_H$ via iterative dyadic bisection.
Formally, this can be stated as $\widehat\Omega_H^\ell\subseteq\widehat\Omega_\coarse^\ell$ for all $\ell\in\N_0$.
In this case, \eqref{eq:hier in pardom defined}  shows that the corresponding hierarchical spline spaces are nested, i.e.,
\begin{align}\label{eq:hierarchical splines nested}
{\mathbb{S}}^{\p}(\KK_0,m,\widehat\TT_H)\subseteq {\mathbb{S}}^{\p}(\KK_0,m,\widehat\TT_\coarse).
\end{align}
In particular, we see that
\begin{align*}
{\mathbb{S}}^{\p}(\KK_{\uni{0}})\subseteq{\mathbb{S}}^{\p}(\KK_0,m,\widehat\TT_\coarse)
\subseteq{\mathbb{S}}^{\p}(\KK_{\uni{L_\coarse-1}}).
\end{align*}

\subsection{Hierarchical splines in the physical domain $\Omega$}
If $\widehat\TT_\coarse$ is a hierarchical mesh in the parameter domain $(0,1)^d$, we set
\begin{align*}
\TT_\coarse\eq\set{\F(\widehat T)}{\widehat T\in\widehat\TT_\coarse},
\end{align*}
where $\F$ is the bi-Lipschitz mapping from Section~\ref{sec:setting}.
In this context, $h\in L^\infty(\Omega)$ denotes the mesh size function defined by $h|_T:=\diam(T)$ for all $T\in\TT_\coarse$.
Moreover, we set
\begin{align*}
{\mathbb{S}}^{\p}(\KK_0,m,\TT_\coarse)\eq\set{\widehat v\circ \bm{F}^{-1}}{\widehat v\in {\mathbb{S}}^{\p}(\KK_0,m,\widehat\TT_\coarse)}.
\end{align*}
The conforming ansatz space for the Galerkin discretization~\eqref{eq:Galerkin} is then defined as
\begin{align}
V_\coarse\eq
{\mathbb{S}}^{\p}(\KK_0,m,\TT_\coarse)\cap H_0^1(\Omega).
\end{align}

\begin{remark}\label{rem:parametrization}
In practice, the parametrization $\F$ is often given in terms of non-uniform rational B-splines (NURBS) along with corresponding polynomial degree $\p_{\F}$ and global knot vector $\KK_{\F}$.
To guarantee good approximation properties of $V_\coarse$, in particular good \emph{a priori} estimates, the functions in ${\mathbb{S}}^{\p}(\KK_0,m,\widehat\TT_\coarse)$ should have the same or lower smoothness as $\F$ along the knot lines on $\widehat\Omega$ corresponding to $\KK_{\F}$.
Such a requirement is not needed for the presented \emph{a posteriori} error analysis to hold. 
Moreover, the form of the oscillation terms~\eqref{eq_osc} and~\eqref{eq:oscillations_glob} for reliability and efficiency, respectively, suggests that the approximation property of $V_\coarse$ has minor influence on the size of these terms, see also Remark~\ref{rem:align}.
\end{remark}

\section{Partitions of unity and patchwise spaces}\label{sec:partitions}

In this section, we prepare the necessary material for defining and analyzing our equilibrated fluxes in the IGA context later.
We particularly design a partition of unity based on hierarchical B-splines and define continuous- and discrete-level local spaces.

\subsection{Partitions of unity based on hierarchical splines}
\label{sec:hierarchical partitions}
We now first construct a partition of unity on $\Omega$ consisting of hierarchical B-splines with the smallest-possible polynomial degree $\overline p$ but sufficient smoothness to be contained in ${\mathbb{S}}^{\p}(\KK_0,m,\TT_\coarse)$. Subsequently, on the local tensor meshes of the support of each of these hierarchical B-splines, we form a partition of unity by piecewise multilinear hat functions with merely $C^0(\Omega)$ continuity.

Let $\overline p\le p$ be a supplementary polynomial degree and let
\begin{align*}
{\overline\KK}_0=(\overline\KK_{1(0)},\dots,\overline\KK_{d(0)})
\end{align*}
 be a fixed  $d$-dimensional vector of $\overline p$-open knot vectors
\begin{align*}
\overline\KK_{i(0)}=(\overline t_{i(0),0},\dots,\overline t_{i(0),\overline N_{i(0)}+\overline p})
\end{align*}
such that $(\overline t_{i(0),\overline p},\dots,\overline t_{i(0),\overline N_{i(0)}})$ is  a subsequence of $(t_{i(0),p},\dots,t_{i(0),N_{i(0)}})$ which is obtained by reducing multiplicities of the latter knots to at least one and
\begin{align*}
0=\overline t_{i(0),0}=\dots=\overline t_{i(0),\overline p}
\quad\text{and}\quad
\overline t_{i(0),\overline N_{i(0)}}=\dots =\overline t_{i(0),\overline N_{i(\coarse)}+\overline p}=1.
\end{align*}
In particular, the  tensor mesh corresponding to $\overline\KK_0$ coincides with the initial tensor-mesh $\widehat\TT_0$ corresponding to $\KK_0$.
To guarantee that
\begin{align}
{\mathbb{S}}^{\overline p}(\overline\KK_{i(0)})\subseteq{\mathbb{S}}^{p}(\KK_{i(0)}),
\end{align}
we further suppose that
\begin{align} \label{eq_ov_p}
p - \# t_{i(0),j} \le \overline p-\overline \#t_{i(0),j}
\end{align}
for all interior knots $t_{i(0),j}$ in $(0,1)$ (which determines the smoothness of the considered splines), where $\overline \#$ denotes the multiplicity within $\overline\KK_{i(0)}$.
Next, we set $\overline\KK_{\uni{0}}\eq\overline\KK_0$ and recursively define  $\overline\KK_{\uni{\ell+1}}$ for $\ell\in\N_0$ as the uniform $h$-refinement of $\overline\KK_{\uni{\ell}}$ with fixed multiplicity $\overline m\in\N$ such that
\begin{align} \label{eq_ov_p_2}
p-m\le \overline p-\overline m.
\end{align}
In words, the knots of $\overline\KK_{\uni{\ell+1}}$ in $[0,1]$ are the knots of $\overline\KK_{\uni{\ell}}$ in $[0,1]$ plus the points $(t_{i(\uni{\ell}),j-1}+t_{i(\uni{\ell}),j})/{2}$ with multiplicity $\overline m$ if $t_{i(\uni{\ell}),j-1}<t_{i(\uni{\ell}),j}$ for $j\in\{\overline p+1,\dots, \overline N_{i(\uni{\ell})}\}$.
If, for example, all interior knots in $\KK_0$ have the same multiplicity $1$ and $m=1$, i.e., the corresponding splines are $C^{p-1}$ along initial as well as new lines, one can only choose $\overline p = p$ and $\overline m =1$.
If all interior knots in $\KK_0$ have the same multiplicity $p$ and $m=p$, i.e., the corresponding splines are only $C^{0}$ along initial as well as new lines, one can choose $\overline p \le p$ and $\overline m \le \overline p$ arbitrarily, which leads us to $\overline p = 1$ and $\overline m=1$.
While the analysis below does not rely on this, we will always choose the smallest-possible polynomial degree $\overline p$ together with $\overline m:=1$.
This is the most sensible choice from a practical as well as theoretical point of view, as the efficiency constant of our estimator depends  on the degree $\overline p$; see Proposition~\ref{thm_ass} and Remark~\ref{rem_p_rob} as well as Proposition~\ref{prop:efficiency} and Remark~\ref{rem:refinement} below.
In Remark~\ref{rem:new}, we further discuss that the choice of $\overline p$ and $\overline m$ has little influence on the data oscillation terms arising in our error estimator.

Let again $\widehat\TT_\coarse$ be a hierarchical mesh with corresponding sets $(\widehat \Omega_\coarse^\ell)_{\ell\in\N_0}$ as in Section~\ref{sec:hierarchical splines parameter}.
Setting $\overline \p\eq(\overline p,\dots,\overline p)$, we define the B-splines $\overline B_{(\uni{\ell},\bm{j},\overline\p)}$, the hierarchical basis $\set{\overline B_\ver}{\ver\in\overline\II_\coarse}$ with index set $\overline\II_\coarse$, the level $\level(\overline B_\ver)$ for $\ver\in\II_\coarse$, and the spanned space of hierarchical splines ${\mathbb{S}}^{\p}(\overline\KK_0,\overline m,\widehat\TT_\coarse)$ as in \eqref{eq:short cHH}--\eqref{eq:hierarchical_splines}.
Again,  \cite[Section~3]{sm16} gives an explicit characterization for the spanned space of hierarchical splines.
Our assumptions on the knot multiplicities, which imply the nestedness $\mathbb{S}^{\overline\p}(\overline\KK_{\uni{\ell}})\subseteq \mathbb{S}^{\p}(\KK_{\uni{\ell}})$ for all $\ell\in\N_0$, thus give that
\begin{align}
{\mathbb{S}}^{\overline\p}({\overline\KK}_0,\overline m,\widehat\TT_\coarse)
\subseteq{\mathbb{S}}^{\p}({\KK}_0, m,\widehat\TT_\coarse).
\end{align}

Now $1\in {\mathbb{S}}^{\overline\p}(\overline\KK_0,\overline m,\widehat\TT_\coarse)$ yields the existence of a partition of unity on the parameter domain
\begin{align*}
1=\sum_{\ver\in\overline\II_\coarse} c_\ver \overline B_\ver.
\end{align*}
One can prove that the coefficients $c_\ver\in\R$ satisfy that
\begin{align}\label{eq_coefs}
    0\le c_\ver\le1
\end{align}
 for all  $\ver\in\overline\II_\coarse$; see, e.g., \cite[Lemma~3.2]{bg16b}.
Consequently, we can define
\begin{align}\label{eq_psi_a_def}
\psi_\ver\eq(c_\ver \overline B_\ver)\circ\F^{-1}\quad\text{for all }\ver\in\VV_\coarse\eq\set{\ver\in\overline\II_\coarse}{c_\ver>0}
\end{align}
and observe that the $\psi_\ver$ form a {\em partition of unity} on the physical domain
\begin{align*}
\sum_{\ver\in\VV_\coarse}\psi_\ver=1 \quad\text{in }\Omega.
\end{align*}
Henceforth, we call $\VV_\coarse$ the set of nodes and $\ver \in \VV_\coarse$ a \emph{node}. For further use, we abbreviate $\omega_\ver \eq {\rm int}(\supp(\psi_\ver))$ as well as $\widehat\omega_\ver\eq\F^{-1}(\omega_\ver)$ for all $\ver\in\VV_\coarse$; we will use the terminology \emph{large patches} for $\omega_\ver$ or $\widehat\omega_\ver$. Figure~\ref{fig:partition} gives an illustrative example.

Below, we will also crucially use a second partition of unity on each large patch $\omega_\ver$. Let $\widehat \TT_\ver$ be the smallest {\em uniform} tensor mesh refinement of $\set{\widehat T\in\widehat\TT_\coarse}{\widehat T\subseteq\supp(B_\ver)}$, i.e.,
\begin{align*}
\widehat \TT_\ver\eq\set{\widehat T\in\widehat\TT_{\uni{\ell_\ver}}}{\widehat T\subseteq\supp(B_\ver)}
\end{align*}
with $\ell_\ver\eq\max\set{\level(\widehat T)}{\widehat T\in \widehat\TT_\coarse \wedge \widehat T\subseteq\supp(B_\ver)}$,
and  $\TT_\ver$ the corresponding mesh of the large patch $\omega_\ver$ in the physical domain; see again Figure~\ref{fig:partition}.
Moreover, let $\mathbb{Q}^{\bm{1}}(\widehat\TT_\ver)$ be the set of all $\widehat\TT_\ver$-piecewise tensor-product polynomials of degree $\bm{1}\eq(1,\dots,1)$ and
\begin{align}\label{eq_PTa}
\mathbb{Q}^{\bm{1}}(\TT_\ver)\eq\set{\widehat v\circ\F^{-1}|_{\omega_{\ver}}}{\widehat v\in \mathbb{Q}^{\bm{1}}(\widehat\TT_\ver)}.
\end{align}
Finally, let $\VV_\coarse^\ver$ be the set of all vertices in the local mesh $\TT_\ver$. We denote by $\psi_\vertt$ the hat function associated with the vertex $\vertt \in \VV_\coarse^\ver$; this is the unique function in $\mathbb{Q}^{\bm{1}}(\TT_\ver)$ $\cap C^0(\oma)$ taking value $1$ in the vertex $\vertt$ and $0$ in all other vertices from $\VV_\coarse^\ver$.
Observe that the $\psi_\vertt$ form a {\em partition of unity} on the large patches $\omega_\ver$
\begin{align*}
\sum_{\vertt\in\VV_\coarse^\ver}\psi_\vertt=1\quad\text{in }\omega_\ver.
\end{align*}
We abbreviate $\omega_\vertt \eq {\rm int}(\supp(\psi_\vertt))$ as well as $\widehat\omega_\vertt\eq\F^{-1}(\omega_\vertt)$ for all $\vertt\in\VV_\coarse^\ver$, for which we use the name \emph{small patches}.
Figure~\ref{fig:partition} gives again an illustrative example.

\subsection{Patchwise Sobolev spaces} \label{sec_patc_spaces}
For a node $\ver\in\VV_\coarse$, define a local Sobolev space on the large patch $\omega_\ver$ as
\begin{align}\label{eq:H1_a}
H_*^1(\omega_\ver)\eq
\begin{cases}
\set{v\in H^1(\omega_\ver)}{\dual{v}{1}_{\omega_\ver}=0}\quad&\text{if }\psi_\ver\in H_0^1(\Omega),\\
\set{v\in H^1(\omega_\ver)}{v=0\text{ on }
\partial\omega_\ver\setminus\psi_\ver^{-1}(\{0\})
}
\quad&\text{else.}
\end{cases}
\end{align}
This is the mean-value-free subspace of $H^1(\omega_\ver)$ in the interior of $\Omega$, and the trace-free (on that part of $\partial\omega_\ver$ where $\psi_\ver$ is nonzero) subspace of $H^1(\omega_\ver)$ adjacent to the boundary of $\Omega$. For vector-valued functions, we will use
\begin{align}\label{eq:H0 div for a}
\H_0(\div,\omega_\ver)\eq
\begin{cases}
\set{\bv\in \H(\div,\omega_\ver)}{\bv{\cdot}\n_{\omega_\ver}=0 \text{ on }\partial\omega_\ver}&\hspace{-2mm}\text{if }\psi_\ver\in H_0^1(\Omega),\\
\set{\bv\in \H(\div,\omega_\ver)}{\bv{\cdot}\n_{\omega_\ver}=0\text{ on }\partial\omega_\ver\cap(\psi_\ver)^{-1}(\{0\})}&\hspace{-2mm}\text{else},\\
\end{cases}
\end{align}
where $\n_{\omega_\ver}$ denotes the outer normal vector on $\partial\omega_\ver$ and $\bv{\cdot}\n_{\omega_\ver}$ is understood in the appropriate weak sense. These are the normal-component-free subspaces of $\H(\div,\omega_\ver)$: everywhere on $\partial \oma$ in the interior of $\Omega$ and on that part of $\partial\omega_\ver$ where $\psi_\ver$ is zero adjacent to the boundary of $\Omega$.

For a node $\ver\in\VV_\coarse$ and a vertex $\vertt\in\VV_\coarse^\ver$, we also define some spaces on the small patches $\omega_\vertt$. In particular, we let
\begin{align}
L_*^2(\omega_\vertt)\eq
\begin{cases}\label{eq:L2 star}
\set{v\in L^2(\omega_\vertt)}{\dual{v}{1}_{\omega_\vertt}=0}\quad&\text{if } \psi_\ver\psi_\vertt\in H_0^1(\Omega),\\
L^2(\omega_\vertt)\quad&\text{else},
\end{cases}
\end{align}
where $\psi_\vertt$, being defined as function on $\omega_{\ver}$, is identified with its extension by zero onto $\Omega$ (which is in general not in $H^1(\Omega)$, cf. Figure~\ref{fig:partition} (c)).
We also define, as in~\eqref{eq:H1_a} and~\eqref{eq:H0 div for a},
\begin{align}\label{eq:H1_b}
H_*^1(\omega_\vertt)\eq
\begin{cases}
\set{v\in H^1(\omega_{\vertt})}{\dual{v}{1}_{\omega_{\vertt}}=0}\quad&\text{if }\psi_\ver\psi_\vertt\in H_0^1(\Omega),\\
\set{v\in H^1(\omega_{\vertt})}{v=0\text{ on }
\partial\omega_{\vertt}\setminus(\psi_\ver\psi_\vertt)^{-1}(\{0\})}
\quad&\text{else}
\end{cases}
\end{align}
and
\begin{align}\label{eq:H0 div for b}
\H_0(\div,\omega_\vertt)\eq
\begin{cases}
\set{\bv\in \H(\div,\omega_\vertt)}{\bv{\cdot}\n_{\omega_\vertt}=0 \text{ on }\partial\omega_\vertt}&\hspace{-2mm}\text{if }\psi_\ver\psi_\vertt\in H_0^1(\Omega),\\
\set{\bv\in \H(\div,\omega_\vertt)}{\bv{\cdot}\n_{\omega_\vertt}=0\text{ on } \partial\omega_\vertt\\ \hspace{5cm} \cap(\psi_\ver\psi_\vertt)^{-1}(\{0\})}&\hspace{-2mm}\text{else}.\\
\end{cases}
\end{align}
Note that these spaces actually depend on both $\ver$ and $\vertt$, where $\ver$ is omitted in our notation.

Finally, for $\ver\in\VV_\coarse$, $\vertt\in\VV_\coarse^\ver$, and $\omega\in\{\omega_\ver,\omega_\vertt\}$, define the Poincar\'e--Friedrichs constant as the minimal constant $C_{\rm PF}(\omega)>0$ such that
\begin{align}\label{eq_PF}
&\norm{v}{\omega}\le  \diam(\omega) C_{\rm PF}(\omega)\norm{\nabla v}{\omega} \quad \text{for all }v\in H_*^1(\omega).
\end{align}
Note that $C_{\rm PF}(\omega)$ only depends on the shape of $\omega$ and $\partial\omega$ where respectively $\psi_\ver$ or $\psi_\ver\psi_\vertt$ is nonzero; if $\psi_\ver\in H_0^1(\Omega)$ or $\psi_\ver\psi_\vertt\in H_0^1(\Omega)$ and for convex $\omega$, there in particular holds $C_{\rm PF}(\omega) \leq 1/\pi$ (``interior'' cases).
In the other, ``boundary'', cases, $C_{\rm PF}(\omega) \leq 1$ when there exists a unit vector $\bm{m}$ such that the straight semi-line of direction $\bm{m}$ originating at (almost) any point in $\omega$ hits the boundary $\partial\omega$ there where respectively $\psi_\ver$ or $\psi_\ver\psi_\vertt$ is nonzero, cf., e.g., \cite{Voh_Poinc_disc_05, vv12} and the references therein.

\subsection{Patchwise discrete subspaces}\label{sec:discrete spaces}
For a node $\ver\in\VV_\coarse$, let us define the $H_*^1(\omega_\ver)$-conforming subspace of the mapped piecewise multilinear functions $\mathbb{Q}^{\bm{1}}(\TT_\ver)$ from~\eqref{eq_PTa} as
\begin{align}\label{eq_Vha}
V_\coarse^\ver\eq \mathbb{Q}^{\bm{1}}(\TT_\ver)\cap H_*^1(\omega_\ver).
\end{align}

Define the vector-valued contravariant Piola transform
\begin{align}\label{eq:piola1}
 \bPhi(\cdot)\eq \big(\det (D\F)^{-1}(D\F)(\cdot)\big)\circ\F^{-1}
\end{align}
and the scalar Piola transform by
\begin{align}\label{eq:piola2}
 \widetilde\Phi(\cdot)\eq \big(\det (D\F)^{-1}(\cdot)\big)\circ\F^{-1},
\end{align}
which satisfy the identity
\begin{align}\label{eq:Piola_eq}
\widetilde\Phi\big(\nabla{\cdot}(\cdot)\big) = \Dv\bPhi(\cdot),
\end{align}
see, e.g., \cite[Chapter~9]{eg21}.

For a node $\ver\in\VV_\coarse$ and a vertex $\vertt\in\VV_\coarse^\ver$, define the meshes of the small patches as $\TT_\vertt\eq\set{T\in\TT_\ver}{T\subseteq\overline\omega_\vertt}$, $\widehat\TT_\vertt\eq\set{\F^{-1}(T)}{T\in\TT_\vertt}$; note that the elements $\widehat T\in\widehat\TT_\vertt$ are rectangles for $d=2$ and rectangular cuboids for $d=3$. Let $\mathbb{Q}^{\widetilde\p}(\widehat \TT_\vertt)$ be the space of all $\widehat\TT_\vertt$-piecewise polynomials of some fixed degree $\widetilde\p=(\widetilde p,\dots,\widetilde p)$ in each coordinate, $\widetilde p\ge0$, and let $L_*^2(\widehat\omega_\vertt)$ be defined as in~\eqref{eq:L2 star} with $\omega_\vertt$ replaced by $\widehat\omega_\vertt$. We then define the local spaces
\begin{subequations}\label{eq_loc_Qab}\begin{align}
\widehat Q_\coarse^{\ver,\vertt} & \eq \mathbb{Q}^{\widetilde\p}(\widehat\TT_\vertt)\cap L_*^2(\widehat\omega_\vertt), \label{eq_wQab} \\
Q_\coarse^{\ver,\vertt} & \eq \set{\widetilde \Phi(\widehat q_\coarse)}{\widehat q_\coarse \in \widehat Q_\coarse^{\ver,\vertt}} = \widetilde \Phi(\widehat Q_\coarse^{\ver,\vertt}). \label{eq_Qab}
\end{align}\end{subequations}
Note that since
\[
\widetilde \Phi(L_*^2(\widehat\omega_\vertt))=L^2_*(\omega_\vertt),
\]
$Q_\coarse^{\ver,\vertt}$ is contained in $L_*^2(\omega_\vertt)$, i.e.,
\begin{align}\label{eq:Q subset}
    Q_\coarse^{\ver,\vertt} \subset L_*^2(\omega_\vertt).
\end{align}

The mean-value constraint in~\eqref{eq:L2 star} (when $\psi_\ver\psi_\vertt\in H_0^1(\Omega)$)
makes $Q_\coarse^{\ver,\vertt}$ a constrained subspace of mapped piecewise polynomials from $\mathbb{Q}^{\widetilde\p}(\widehat\TT_\vertt)$ scaled by the factor $\det (D\F)^{-1}$.
With the set $\widehat Q_\coarse:=\mathbb{Q}^{\widetilde\p}(\widehat\TT_\coarse)$ of all $\widehat\TT_\coarse$-piecewise polynomials of degree $\widetilde\p$, we also define the global (unconstrained) space via the mapping $\widetilde\Phi$ as
\begin{align*}
Q_\coarse\eq
\set{\widetilde \Phi(\widehat q_\coarse)}{\widehat q_\coarse\in \widehat Q_\coarse} = \widetilde \Phi(\widehat Q_\coarse).
\end{align*}

Let
\begin{align*}
\hspace{-3mm} \RT^{\widetilde\p}(\widehat\TT_\vertt)\eq\begin{cases}
\mathbb{Q}^{\widetilde\p+(1,0)}(\widehat\TT_\vertt)\times \mathbb{Q}^{\widetilde\p+(0,1)}(\widehat\TT_\vertt)\quad&\text{ if } d=2,
 \\
 \mathbb{Q}^{\widetilde\p+(1,0,0)}(\widehat\TT_\vertt)\times \mathbb{Q}^{\widetilde\p+(0,1,0)}(\widehat\TT_\vertt)\times  \mathbb{Q}^{\widetilde\p+(0,0,1)}(\widehat\TT_\vertt)&\text{ if } d=3
 \end{cases}
\end{align*}
be the usual broken (elementwise) Raviart--Thomas space on the rectangular/rectangular cuboid mesh $\widehat\TT_\vertt$, see, \eg, \cite[Section~2.4.1]{Bof_Brez_For_MFEs_13}. We then set
\begin{subequations} \label{eq_Vhab_def} \begin{align}
    \widehat\V_\coarse^{\ver,\vertt} & \eq \RT^{\widetilde\p}(\widehat\TT_\vertt) \cap \H_0(\div,\widehat\omega_\vertt), \label{eq_w_Vhab} \\
    \V_\coarse^{\ver,\vertt} & \eq \set{\bPhi(\widehat\bv_\coarse)}{\widehat\bv_\coarse\in \widehat\V_\coarse^{\ver,\vertt}} = \bPhi(\widehat\V_\coarse^{\ver,\vertt}). \label{eq_Vhab}
\end{align}\end{subequations}
Since
\[
    \H_0(\div,\omega_\vertt) = \bPhi( \H_0(\div,\widehat\omega_\vertt)),
\]
we also have

\begin{align}\label{eq:V subset}
    \V_\coarse^{\ver,\vertt} = \RT^{\widetilde\p}(\TT_\vertt)\cap \H_0(\div,\omega_\vertt),
\end{align}
where $\RT^{\widetilde\p}(\TT_\vertt)$ is the space $\RT^{\widetilde\p}(\widehat\TT_\vertt)$ mapped by the Piola transform
\begin{align}\label{eq_RT_map}
\RT^{\widetilde\p}(\TT_\vertt)\eq\set{\bPhi(\widehat\bv_\coarse)}{\widehat\bv_\coarse\in \RT^{\widetilde\p}(\widehat\TT_\vertt)} =\bPhi(\RT^{\widetilde\p}(\widehat\TT_\vertt)).
\end{align}
Crucially, by construction, see~\cite[Section~2.4.1]{Bof_Brez_For_MFEs_13},
\begin{subequations} \label{eq:divs}
\begin{align} \label{eq:div_wV}
    \Dv \widehat \V_\coarse^{\ver,\vertt} =  \widehat Q_\coarse^{\ver,\vertt},
\end{align}
whereas by the Piola transform identity~\eqref{eq:Piola_eq} and definition~\eqref{eq_Qab}, one also has
\begin{align} \label{eq:div_V}
    \Dv \V_\coarse^{\ver,\vertt} =  Q_\coarse^{\ver,\vertt}.
\end{align}
\end{subequations}

\begin{remark}\label{rem:align}
While it is in principle not relevant for the quality of our {\sl a posteriori} estimator whether the knot lines corresponding to $V_\coarse$ are aligned with those of a NURBS parametrization $\F$ (see Remark~\ref{rem:parametrization}), this is essential for $Q_\coarse^{\ver,\vertt}$ and $\RT^{\widetilde\p}(\TT_\vertt)$.
Indeed, the latter two spaces must exhibit good approximation properties to obtain small oscillation terms~\eqref{eq_osc} and~\eqref{eq:oscillations_glob}.
On the other hand, as $Q_\coarse^{\ver,\vertt}$ and $\RT^{\widetilde\p}(\TT_\vertt)$ are discontinuous (piecewise with respect to the mesh $\TT_\vertt$), they do not see/need the regularity of $\F$ over the knot lines. 
We also refer to Remark~\ref{rem:reliable oscillations} and Remark~\ref{rem:efficient oscillations} for conditions under which the oscillations even vanish.
\end{remark}

\section{Abstract assumptions}\label{sec:assumptions}

In this section, we attempt to describe as clearly as possible the underlying principles of a posteriori error analysis by equilibrated fluxes in the present context. For this purpose, we identify six abstract assumptions under which our subsequent analysis can be carried out. We then immediately verify these assumptions in the particular context of Sections~\ref{sec:setting}--\ref{sec:partitions}.

As for the general setting, we merely need to assume:

\begin{assumption}
\label{ass:setting}
$\widehat\Omega$ is an open bounded connected Lipschitz domain in $\R^d$, $\F$ is a bi-Lipschitz mapping, $\Omega \eq \F(\widehat\Omega)$, and $V_\coarse$ is an arbitrary subspace of $H_0^1(\Omega)$.
\end{assumption}

As for the partitions of unity, the minimalist assumptions are (note that we do not require that the partitions are non-negative, i.e., $\psi_\ver,\psi_\vertt\ge0$):

\begin{assumption}
\label{ass:partition of unity}
There is a finite index set $\VV_\coarse$ and functions $\psi_\ver$ such that
\begin{subequations}
\begin{align}
\set{\psi_\ver}{\ver\in\VV_\coarse}\subset W^{1,\infty}(\Omega)
\end{align}
form a partition of unity over $\Omega$ in the sense that
\begin{align}\label{eq:partition of unity}
\sum_{\ver\in\VV_\coarse}\psi_\ver=1 \quad\text{in }\Omega.
\end{align}
The interior $\omega_\ver$ of the support of any $\psi_\ver$ is a connected Lipschitz domain with $|\omega_\ver|>0$.
Moreover,
\begin{align}\label{eq:beta in H01}
\set{\psi_\ver}{\ver\in\VV_\coarse}\cap H_0^1(\Omega)\subset V_\coarse.
\end{align}
\end{subequations}
\end{assumption}

\begin{assumption}
\label{ass:partition of unity2}
For any node $\ver\in\VV_\coarse$, there is a finite set of vertices $\VV_\coarse^\ver$ and functions $\psi_\vertt$ such that
\begin{subequations}
\begin{align}\label{eq_psi_b}
\set{\psi_\vertt}{\vertt\in\VV_\coarse^\ver}\subset W^{1,\infty}(\omega_\ver)
\end{align}
form a partition of unity over $\omega_\ver$ in that
\begin{align}\label{eq:partition of unity2}
\sum_{\vertt\in\VV_\coarse^\ver}\psi_\vertt=1\quad\text{in }\omega_\ver.
\end{align}
The interior $\omega_\vertt$ of the support of any $\psi_\vertt$ is a connected Lipschitz domain with $|\omega_\vertt|>0$.
Moreover, for $H_*^1(\omega_\ver)$ given by~\eqref{eq:H1_a}, all $\psi_\vertt$ such that $\psi_\ver\psi_\vertt\in H_0^1(\Omega)$, where $\psi_{\vertt}$ is identified with its extension by zero onto $\Omega$,  are contained up to additive constants in a finite-dimensional subspace $V_h^\ver\subset H_*^1(\omega_\ver)$, i.e.,
\begin{align}\label{eq:b in H01}
\set{\psi_\vertt}{\vertt\in\VV_\coarse^\ver\wedge \psi_\ver\psi_\vertt\in H_0^1(\Omega)}\subset
\begin{cases}
\set{v_\coarse+c}{v_\coarse \in V_\coarse^\ver, c\in\R}\quad &\text{if }\psi_\ver\in H_0^1(\Omega)\\
V_\coarse^\ver\quad&\text{else}.
\end{cases}
\end{align}
\end{subequations}
\end{assumption}

\begin{remark}
While in theory, there is almost no connection between the functions $\psi_\ver$ and the functions $\psi_\vertt$, in practice, $\psi_\ver$ are mapped piecewise polynomials with high smoothness on some local mesh and the corresponding $\psi_\vertt$ are mapped piecewise polynomials with low smoothness on the same mesh, see Sections~\ref{sec:inex_iga} and~\ref{sec:hierarchical partitions}.
We stress that the choice of these partitions of unity determines the quality of the equilibrated flux estimator through the oscillations terms in Propositions~\ref{prop:reliable} and~\ref{prop:efficiency}.
\end{remark}

Next, we recall the space $H_*^1(\omega_\vertt)$ from~\eqref{eq:H1_b}, the Poincar\'e--Friedrichs inequality~\eqref{eq_PF}, and assume the following set of estimates:
\begin{assumption}\label{ass:CPF}
There exist generic positive constants $C_1,\dots,C_6>0$ such that for all $\ver\in\VV_\coarse$ and $\vertt\in\VV_\coarse^\ver$, it holds that
\begin{subequations}
\begin{align}
\label{eq:beta bounded}
& \norm{\psi_\ver}{\infty,\omega_\ver}\le C_1,
\\
\label{eq:CPFcont beta}
&\norm{\nabla \psi_\ver}{\infty,\omega_\ver} \diam(\omega_\ver) C_{\rm PF}(\omega_\ver)\le C_2,
\\
\label{eq:CPFcont beta b}
&\norm{\nabla \psi_\ver}{\infty,\omega_\vertt}\diam(\omega_\vertt)C_{\rm PF}(\omega_\vertt)\le C_3,
\\
\label{eq:number of bs}
&\sup_{\bx \in \omega_\ver} \#\set{\vertt'\in\VV_\coarse^\ver}{\bx \in \omega_{\vertt'}}\le C_4,
\\
\label{eq:b bounded}
& \norm{\psi_\vertt}{\infty,\omega_\vertt}\le C_5,
\\
\label{eq:CPFcont b}
&\norm{\nabla \psi_\vertt}{\infty,\omega_\vertt} \diam(\omega_\vertt) C_{\rm PF}(\omega_\vertt) \le C_6.
\end{align}
\end{subequations}
\end{assumption}

Recall the space $\H_0(\div,\omega_\vertt)$ from~\eqref{eq:H0 div for b}, the subspace $L_*^2(\omega_\vertt)$ of $L^2(\omega_\vertt)$ containing functions with mean value zero if $\psi_\ver\psi_\vertt\in H_0^1(\Omega)$ from~\eqref{eq:L2 star}, and the similar space $L_*^2(\widehat\omega_\vertt)$ in the parameter domain. Recall also the contravariant Piola transform $\bPhi$ from~\eqref{eq:piola1}.
For local flux equilibration, we will rely on discrete spaces $Q_\coarse^{\ver,\vertt}$ and $\V_\coarse^{\ver,\vertt}$ satisfying the following:

\begin{assumption}\label{ass:QV subsets}
For any node $\ver\in\VV_\coarse$ and any vertex $\vertt\in\VV_\coarse^\ver$, there are finite-dimensional subspaces
\begin{subequations}
\begin{align}\label{eq:QV subsets}
 Q_\coarse^{\ver,\vertt}\subset L_*^2(\omega_\vertt) \quad\text{and}\quad\V_\coarse^{\ver,\vertt} \subset \H_0(\div,\omega_\vertt)
\end{align}
satisfying the compatibility condition
\begin{align}\label{eq:QV compatibility}
\begin{split}
 \Dv \V_\coarse^{\ver,\vertt} & =  Q_\coarse^{\ver,\vertt}.
\end{split}
\end{align}
Moreover, we suppose the existence of a global space $Q_\coarse\subset L^2(\Omega)$
such that
\begin{align}\label{eq:Q inclusion}
Q_\coarse|_{\omega_\vertt} \subseteq Q_{\coarse,\mathrm{c}}^{\ver,\vertt}\eq
\begin{cases}
\set{q_\coarse+\widetilde\Phi(c)}{q_\coarse\in Q_\coarse^{\ver,\vertt},c\in\R}\quad&\text{if }\psi_\ver\psi_\vertt\in H_0^1(\Omega),\\
Q_\coarse^{\ver,\vertt}\quad&\text{else.}
\end{cases}
\end{align}
\end{subequations}
\end{assumption}

\begin{remark}\label{rem_Q_spaces}
Using the Piola transforms $\bPhi$ and $\widetilde\Phi$ from~\eqref{eq:piola1}--\eqref{eq:piola2}, let
\begin{align}\label{eq_Q_h_ab_mean}
\widehat Q_\coarse^{\ver,\vertt} \eq \widetilde\Phi^{-1}(Q_\coarse^{\ver,\vertt}) \subset L_*^2(\widehat\omega_\vertt) =\widetilde\Phi^{-1}(L^2_*(\omega_\vertt))
\end{align}
and
\begin{align}
 \widehat\V_\coarse^{\ver,\vertt} \eq \bPhi^{-1}(\V_\coarse^{\ver,\vertt}) \subset \H_0(\div,\widehat\omega_\vertt)=\bPhi^{-1}( \H_0(\div,\omega_\vertt)).
\end{align}
From~\eqref{eq:QV compatibility} and~\eqref{eq:Piola_eq}, we in particular have
\[
    \Dv \widehat \V_\coarse^{\ver,\vertt} =  \widehat Q_\coarse^{\ver,\vertt},
\]as in Section~\ref{sec:discrete spaces}.
The role of the global space $Q_\coarse\subset L^2(\Omega)$ will be prominent below: please note that it is related to neither the node $\ver$, nor to the vertex $\vertt$; this forces the local spaces $Q_{\coarse,\mathrm{c}}^{\ver,\vertt}$ to contain patch-independent ``base-blocks''.
In practice, $Q_\coarse|_{\omega_\vertt} = Q_{\coarse,\mathrm{c}}^{\ver,\vertt}$ only for uniform mesh refinement but not, for example, in the setting of Figure~\ref{fig:partition}, where a strict inclusion holds, since the local spaces $Q_{\coarse,\mathrm{c}}^{\ver,\vertt}$ stem from the local uniform fine meshes $\TT_\ver$ (in grey in Figure~\ref{fig:partition}), whereas $Q_\coarse$ is related to the (mapped) hierarchical mesh $\TT_\coarse$ (in black in Figure~\ref{fig:partition}).
\end{remark}

Finally, we will essentially employ the following finite dimension to infinite dimension extension property.

\begin{assumption}\label{ass:min RT to cont}
There exist a generic constant $C_{\rm st}\geq 1$, as well as a superspace $\RT_\coarse^{\ver,\vertt} \subset [L^2(\omega_\vertt)]^d$ verifying $\V_\coarse^{\ver,\vertt} =\RT_\coarse^{\ver,\vertt} \cap \H_0(\div,\omega_\vertt)$
for all nodes $\ver\in\VV_\coarse$ and vertices $\vertt\in\VV_\coarse^\ver$, such that
\begin{align}\label{eq:min RT to cont}
\min_{\substack{\bv_\coarse\in\V_\coarse^{\ver,\vertt}\\
 \Dv\bv_\coarse=g_\coarse}}
\norm{\bv_\coarse + \ttau_\coarse}{\omega_\vertt}
\le
C_{\rm st} \min_{\substack{\bv \in \H_0(\div,\omega_\vertt)\\
\Dv\bv=g_\coarse}}
\norm{\bv+ \ttau_\coarse}{\omega_\vertt}
\end{align}
for all $g_\coarse\in \Dv\V_\coarse^{\ver,\vertt} =Q_\coarse^{\ver,\vertt}$ and all $\ttau_\coarse \in \RT_\coarse^{\ver,\vertt}$.
\end{assumption}

We now verify that the above assumptions are satisfied in our IGA context:

\begin{proposition}\label{thm_ass} Assumptions~\ref{ass:setting}--\ref{ass:min RT to cont} are satisfied in the context of Sections~\ref{sec:setting}--\ref{sec:partitions} with the choice $\RT_\coarse^{\ver,\vertt} = \RT^{\widetilde\p}(\TT_\vertt)$.
 In particular, the constants in Assumption~\ref{ass:CPF} can be taken as $C_1=1$, $C_4=2^d$, $C_5=1$ and such that $C_2$, $C_3$, $C_6$, and $C_{\rm st}$ only depend on the shapes in $\widehat \TT_{\uni{0}}$ as well as the mapping $\F$ via $\max\{\norm{D\F}{\infty,\widehat\Omega},\norm{(D\F)^{-1}}{\infty,\widehat\Omega}\}$; $C_2$ and $C_3$ additionally depend on the supplementary polynomial degree $\overline p$ from~\eqref{eq_ov_p}--\eqref{eq_ov_p_2}.
\end{proposition}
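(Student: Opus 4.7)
The plan is to verify the six abstract assumptions one by one, reusing the constructions from Sections~\ref{sec:setting}--\ref{sec:partitions}. Assumption~\ref{ass:setting} is immediate from the setup in Section~\ref{sec:setting}. For Assumption~\ref{ass:partition of unity}, the partition property $\sum_\ver \psi_\ver = 1$ is the defining identity~\eqref{eq_psi_a_def}; the fact that each $\omega_\ver = \F(\widehat\omega_\ver)$ is a connected Lipschitz domain follows from the connectedness and Lipschitz regularity of $\supp(\overline B_\ver)$ (a product of one-dimensional intervals, made of tensor-mesh elements via~\eqref{eq:supp elements}) together with the bi-Lipschitz property of $\F$; and the inclusion~\eqref{eq:beta in H01} follows from the nesting ${\mathbb{S}}^{\overline\p}(\overline\KK_0,\overline m,\widehat\TT_\coarse)\subseteq {\mathbb{S}}^{\p}(\KK_0,m,\widehat\TT_\coarse)$ guaranteed by~\eqref{eq_ov_p}--\eqref{eq_ov_p_2}, so that $\psi_\ver = (c_\ver \overline B_\ver)\circ\F^{-1}\in {\mathbb{S}}^{\p}(\KK_0,m,\TT_\coarse)$. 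Assumption~\ref{ass:partition of unity2} is verified analogously using the hat functions $\psi_\vertt$ on the uniform local mesh $\TT_\ver$: the partition identity is standard, and the inclusion~\eqref{eq:b in H01} is built into the definition of $V_\coarse^\ver$ in~\eqref{eq_Vha}; the only point requiring a small argument is to check that when $\psi_\ver\notin H_0^1(\Omega)$ but $\psi_\ver\psi_\vertt\in H_0^1(\Omega)$, the extension of $\psi_\vertt$ by zero vanishes on $\partial\omega_\ver\setminus\psi_\ver^{-1}(\{0\})$, which follows from the support structure of the hat function.

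Assumption~\ref{ass:CPF} reduces to scaling estimates on reference patches. The bounds $C_1=1$ and $C_5=1$ follow from $0\le c_\ver \le 1$ together with $0\le B_\ver\le 1$ (cf.~\eqref{eq_coefs} and~\eqref{eq_scaling}), and from the fact that $\psi_\vertt$ is a nodal hat function. The overlap bound~\eqref{eq:number of bs} with $C_4=2^d$ follows from the tensor-product structure: at any point $\bx\in\omega_\ver$ there are at most $2^d$ vertices $\vertt'\in\VV_\coarse^\ver$ of the local mesh $\TT_\ver$ whose hat support contains $\bx$. For the gradient estimates~\eqref{eq:CPFcont beta}, \eqref{eq:CPFcont beta b}, \eqref{eq:CPFcont b}, the strategy is to pull back through $\F$ to $\widehat\omega_\ver$ or $\widehat\omega_\vertt$, then rescale each reference patch to a fixed shape; on such a unit patch the gradient of an $\overline p$-degree univariate B-spline is bounded by a constant depending only on $\overline p$ and on the shapes present in $\widehat\TT_{\uni{0}}$, and the Poincaré--Friedrichs constant $C_{\rm PF}$ is bounded in terms of the shape only. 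The resulting constants $C_2$, $C_3$, $C_6$ thus depend only on $\overline p$, on the reference shapes, and on $\max\{\Vert D\F\Vert_{\infty,\widehat\Omega},\Vert(D\F)^{-1}\Vert_{\infty,\widehat\Omega}\}$.

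For Assumption~\ref{ass:QV subsets}, take $Q_\coarse^{\ver,\vertt}$ and $\V_\coarse^{\ver,\vertt}$ as defined in~\eqref{eq_loc_Qab} and~\eqref{eq_Vhab_def}. The inclusions in $L_*^2(\omega_\vertt)$ and $\H_0(\div,\omega_\vertt)$ are recorded in~\eqref{eq:Q subset} and~\eqref{eq:V subset}, and the compatibility identity~\eqref{eq:QV compatibility} is~\eqref{eq:div_V}. For the global space, set $Q_\coarse\eq \widetilde\Phi(\mathbb{Q}^{\widetilde\p}(\widehat\TT_\coarse))$; to establish~\eqref{eq:Q inclusion} it suffices, after cancelling $\widetilde\Phi$, to show that $\mathbb{Q}^{\widetilde\p}(\widehat\TT_\coarse)|_{\widehat\omega_\vertt}\subseteq \mathbb{Q}^{\widetilde\p}(\widehat\TT_\vertt)$ in the mean-value case (and similarly otherwise, where the addition of $\widetilde\Phi(c)$ simply absorbs a mean-value constant). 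This holds because $\widehat\TT_\vertt$ is by definition a uniform refinement at the finest level occurring in $\supp(B_\ver)\supseteq\widehat\omega_\vertt$, hence refines $\widehat\TT_\coarse$ on $\widehat\omega_\vertt$; piecewise tensor-product polynomials on the coarser mesh are also piecewise polynomials on the finer one.

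The main technical obstacle is Assumption~\ref{ass:min RT to cont}, the stability of the discrete minimization by mapped Raviart--Thomas fields against the continuous minimization over $\H_0(\div,\omega_\vertt)$. Taking $\RT_\coarse^{\ver,\vertt}\eq\RT^{\widetilde\p}(\TT_\vertt)$ so that $\V_\coarse^{\ver,\vertt}=\RT_\coarse^{\ver,\vertt}\cap\H_0(\div,\omega_\vertt)$ by~\eqref{eq:V subset}, the plan is to first pass to the reference patch via the Piola transforms~\eqref{eq:piola1}--\eqref{eq:piola2}, which preserves the relevant norms up to factors bounded by $\max\{\Vert D\F\Vert_{\infty,\widehat\Omega},\Vert(D\F)^{-1}\Vert_{\infty,\widehat\Omega}\}$, and then invoke the broken polynomial extension property on the reference rectangular/rectangular-cuboid patch $\widehat\omega_\vertt$, which is precisely the content of Appendix~\ref{sec:extension}. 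The resulting constant $C_{\rm st}$ is independent of the polynomial degree $\widetilde p$ and depends only on the reference shapes and on $\F$.
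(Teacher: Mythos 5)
Your proposal is correct and follows essentially the same route as the paper: Assumptions~\ref{ass:setting}--\ref{ass:partition of unity2} are dispatched by construction, Assumption~\ref{ass:CPF} by scaling estimates and B-spline/hat-function bounds (the paper cites precisely such estimates from the literature rather than re-deriving them), Assumption~\ref{ass:QV subsets} by the identities already recorded in Section~\ref{sec:discrete spaces}, and Assumption~\ref{ass:min RT to cont} by Piola pull-back to the parameter domain and the broken extension property of Appendix~\ref{sec:extension}. One small imprecision: in your paragraph for Assumption~\ref{ass:CPF} you bundle $C_2,C_3,C_6$ together and say they ``depend only on $\overline p$, on the reference shapes, and on $\max\{\Vert D\F\Vert_{\infty,\widehat\Omega},\Vert(D\F)^{-1}\Vert_{\infty,\widehat\Omega}\}$''; however, \eqref{eq:CPFcont b} involves $\nabla\psi_\vertt$, where $\psi_\vertt$ is a mapped piecewise multilinear hat function and not a $\overline p$-degree B-spline, so $C_6$ in fact does not depend on $\overline p$ — this is exactly what the proposition asserts, and your rescaling argument does give it once you note $\Vert\nabla\psi_\vertt\Vert_{\infty,\omega_\vertt}\lesssim\diam(\omega_\vertt)^{-1}$ with a hidden constant independent of $\overline p$.
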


\begin{proof} Assumptions~\ref{ass:setting}--\ref{ass:partition of unity2} are immediately satisfied with the choices made in Sections~\ref{sec:setting}--\ref{sec:partitions}, in particular fixing the space $V_\coarse^\ver$ following~\eqref{eq_Vha}.

We now verify Assumption~\ref{ass:CPF}. Thanks to~\eqref{eq_scaling} and~\eqref{eq_coefs}, inequality~\eqref{eq:beta bounded} is satisfied with the constant $C_1=1$, whereas~\eqref{eq:number of bs} and~\eqref{eq:b bounded} hold easily with respectively $C_4=2^d$ and $C_5=1$. Next, \eqref{eq:CPFcont beta}, \eqref{eq:CPFcont beta b}, and~\eqref{eq:CPFcont b} follow from the facts that $\norm{\nabla\psi_\ver}{\infty,\omega_\ver} \lesssim \overline p \,\diam(\omega_\ver)^{-1}$, $\diam(\omega_\vertt) \leq \diam(\omega_\ver)$, and $\norm{\nabla\psi_\vertt}{\infty,\omega_\vertt} \lesssim \diam(\omega_\vertt)^{-1}$, see, e.g., \cite[Equation~(2.7)]{bbsv14}, where we also use that $C_{\rm PF}(\omega_\ver), C_{\rm PF}(\omega_\vertt)\lesssim 1$ (see, e.g., \cite[Corollary~2.2]{vv12}).
Here, $A\lesssim B$ means that $A\le C B$ for a hidden constant $C>0$ depending only on the shapes of the elements in $\widehat\TT_{\uni{0}}$ as well as on $\max\{\norm{D\F}{\infty,\widehat\Omega},\norm{(D\F)^{-1}}{\infty,\widehat\Omega}\}$.

We now turn to Assumption~\ref{ass:QV subsets}. Properties~\eqref{eq:QV subsets} and~\eqref{eq:Q inclusion} are trivially satisfied for the spaces defined in Section~\ref{sec:discrete spaces}, see in particular~\eqref{eq:Q subset} and~\eqref{eq:V subset}, whereas the compatibility condition~\eqref{eq:QV compatibility} follows from the construction requirement of the Raviart--Thomas space in the parameter domain~\eqref{eq:div_wV} by~\eqref{eq:Piola_eq} and definition~\eqref{eq_Qab}, see~\eqref{eq:div_V}.

We finally address Assumption~\ref{ass:min RT to cont}. For the spaces defined in Section~\ref{sec:discrete spaces}, where we take $\RT_\coarse^{\ver,\vertt} = \RT^{\widetilde\p}(\TT_\vertt)$, this result is proved in the parameter domain
in~\cite[Theorems~5 and~7]{Brae_Pill_Sch_p_rob_09} in two space dimensions; in three space dimensions, one needs to rely instead on~\cite{Cost_Daug_Demk_ext_08}, cf.~\cite[Theorem~2.5 and Corollary~3.3]{ev20} building on~\cite{Cost_McInt_Bog_Poinc_10, Demk_Gop_Sch_ext_III_12}.
We give a proof in the physical domain in Appendix~\ref{sec:extension}, where the resulting constant $C_{\rm st}$ depends only on the shapes of the elements in $\widehat\TT_{\uni{0}}$ and $\max\{\norm{D\F}{\infty,\widehat\Omega},\norm{(D\F)^{-1}}{\infty,\widehat\Omega}\}$.
We stress that $C_{\rm st}$ is independent of the polynomial degree $\widetilde p$.
\end{proof}

\begin{remark}\label{rem_p_rob} Recall that the supplementary polynomial degree $\overline p$ from Section~\ref{sec:hierarchical partitions} depends on the considered smoothness but not necessarily on the polynomial degree $p$. In this sense, $C_2, C_3$ in Proposition~\ref{thm_ass} are independent of the polynomial degree $p$. We recall in particular that $\overline p = 1$ can be taken for $C^{0}$ splines, and in general $\overline p = k+1$ for $C^k$ splines. We admit, though, that $\overline p = p$ for $C^{p-1}$ splines, see the discussion in Section~\ref{sec:hierarchical partitions}. \end{remark}

\begin{remark}\label{rem:naive_approach}
We mention that the abstract Assumptions~\ref{ass:setting}--\ref{ass:QV subsets} are also satisfied in the setting of Sections~\ref{sec:standard_equi} and~\ref{sec:standard_iga}.
Indeed, one can simply choose one function $\psi_\vertt:=1$ on each patch $\omega_\ver$ so that $\omega_\vertt = \omega_\ver$, $V_\coarse^\ver := \{0\}$, $\V_\coarse^{\ver,\vertt}:= \RT^{\widetilde \p}(\TT_\ver) \cap \H_0(\div,\omega_\ver)$ with $\RT^{\widetilde \p}(\TT_\ver)$ from \eqref{eq:RT1}, $Q_\coarse^{\ver,\vertt} := \mathbb{Q}^{\widetilde \p}(\TT_\ver) \cap L^2_*(\omega_\ver) $ and $Q_\coarse := \mathbb{Q}^{\widetilde \p}(\TT_\coarse)$. For Assumption~\ref{ass:min RT to cont}, in turn, to our knowledge, there only is a rigorous proof in the setting of Section~\ref{sec:standard_equi} with ``small'' vertex patches $\oma$, and not in Section~\ref{sec:standard_iga} with ``large'' vertex patches $\oma$.
\end{remark}

\section{Inexpensive equilibration}\label{sec:equilibration}
Let the abstract assumptions of Section~\ref{sec:assumptions} be satisfied, let $u$ solve~\eqref{eq:variational}, and let $u_\coarse$ solve~\eqref{eq:Galerkin}.
The partition of unity functions $\psi_\ver$ from Assumption~\ref{ass:partition of unity}, in our setting the hierarchical B-splines $\psi_\ver$ of polynomial degree $\overline \p=(\overline p,\dots,\overline p)$ and multiplicity $\overline m$ from~\eqref{eq_psi_a_def}, lead to
\begin{align}\label{eq:Galerkin for beta}
\dual{\nabla u_\coarse}{\nabla \psi_\ver}_{\omega_\ver} =\dual{f}{\psi_\ver}_{\omega_\ver}\quad\text{for all }\ver\in \VV_\coarse \text{ with }\psi_\ver\in H_0^1(\Omega),
\end{align}
which is an immediate consequence of~\eqref{eq:Galerkin} and~\eqref{eq:beta in H01}. Thus, in view of~\eqref{eq:variational},
\begin{align}\label{eq:GO}
\dual{\nabla (u-u_\coarse)}{\nabla \psi_\ver}_{\omega_\ver} =0\quad\text{for all }\ver\in \VV_\coarse \text{ with }\psi_\ver\in H_0^1(\Omega).
\end{align}
This orthogonality is sufficient to localize the error $\norm{\nabla (u-u_\coarse)}{\Omega}$ (or, equivalently, the dual norm of the residual) over the large patches $\omega_\ver$, see~\cite{Blech_Mal_Voh_loc_res_NL_20} and the references therein, and then a flux equilibration can be easily devised. The issue, however, is that this leads to an expensive equilibration with {\em higher-order} (related to the polynomial degree $p$) mixed finite spaces on the {\em large patches} $\omega_\ver$, see Section~\ref{sec:standard_iga}.
Our goal below is to design a much {\em less expensive equilibration}, with some inexpensive (typically piecewise multilinear) solve on the large patches $\omega_\ver$ followed by higher-order (related to $p$) mixed finite element solves on the {\em small patches} $\omega_\vertt$ only, extending the construction from Section~\ref{sec:inex_iga} to the present general setting. In order to achieve it, we crucially rely on the discrete patchwise spaces from Section~\ref{sec:assumptions}/Section~\ref{sec:discrete spaces}.

\subsection{Inexpensive (lowest-order) residual lifting on the large patches $\omega_\ver$}

Our first step is to construct a discrete residual function $r_\coarse^\ver\in V_\coarse^\ver$, where $V_\coarse^\ver$ is the local space from Assumption~\ref{ass:partition of unity2}. Recall that in the context of hierarchical B-splines, $V_\coarse^\ver$ merely consists of mapped piecewise multilinear functions, see~\eqref{eq_PTa} and~\eqref{eq_Vha}.
\begin{definition}\label{def_ra}
For all nodes $\ver\in\VV_\coarse$, let $r_\coarse^\ver\in V_\coarse^\ver$ be such that
\begin{align}\label{eq:rh beta}
\dual{\nabla r_\coarse^\ver}{\nabla v_\coarse}_{\omega_\ver}
=\dual{f}{v_\coarse \psi_\ver}_{\omega_\ver}- \dual{\nabla u_\coarse}{\nabla(v_\coarse \psi_\ver)}_{\omega_\ver}
\quad\text{for all }v_\coarse\in V_\coarse^\ver.
\end{align}
\end{definition}

Figure~\ref{fig:procedure}, steps 1)--3), gives an illustration for
\eqref{eq:rh beta} in the spline setting, which is an {\em inexpensive} scalar-valued lowest-order local problem on the larger patches $\omega_\ver$, lifting the $\psi_\ver$-weighted residual, cf.~\cite{Cars_Funk_full_rel_FEM_00, Brae_Pill_Sch_p_rob_09, ev15, Blech_Mal_Voh_loc_res_NL_20}. We note that~\eqref{eq:rh beta} is a finite-dimensional version of the problem: find $r^\ver\in H_*^1(\omega_\ver)$ such that
\begin{align}\label{eq:r beta}
\dual{\nabla r^\ver}{\nabla v}_{\omega_\ver}
=\dual{f}{v\psi_\ver}_{\omega_\ver}- \dual{\nabla u_\coarse}{\nabla(v\psi_\ver)}_{\omega_\ver}
\quad\text{for all }v\in H_*^1(\omega_\ver).
\end{align}
This can be equivalently written as: find $r^\ver\in H_*^1(\omega_\ver)$ such that
\begin{align}\label{eq:r beta_2}
\dual{\nabla r^\ver + \psi_\ver \nabla u_\coarse}{\nabla v}_{\omega_\ver}
=\dual{f\psi_\ver - \nabla u_\coarse {\cdot} \nabla \psi_\ver}{v}_{\omega_\ver}
\quad\text{for all }v\in H_*^1(\omega_\ver).
\end{align}
From~\eqref{eq:r beta_2}, we see that $-(\nabla r^\ver + \psi_\ver \nabla u_\coarse)$ lies in $\H_0(\div,\omega_\ver)$ with the divergence equal to $f \psi_\ver - \nabla u_\coarse {\cdot} \nabla \psi_\ver$.
It can be also characterized by
\begin{align}\label{eq:r beta_3}
\argmin{\substack{\bv \in \H_0(\div,\omega_\ver)\\
\Dv \bv = f\psi_\ver -\nabla u_\coarse{\cdot}\nabla \psi_\ver}}
\norm{\bv + \psi_\ver \nabla u_\coarse}{\omega_\ver},
\end{align}
which is the infinite-dimensional paradigm of~\eqref{eq:sigma_a}.

\subsection{Projection operators for general bi-Lipschitz mappings $\F$} \label{sec_proj}

In order to proceed, we need to define some projection operators. Recall the local space $Q_\coarse^{\ver,\vertt}$ from Assumption~\ref{ass:QV subsets} as well as $\widehat Q_\coarse^{\ver,\vertt}$ from~\eqref{eq_Q_h_ab_mean}.
For a given $g\in L^2(\omega_\vertt)$, we define $\Upsilon_{Q_\coarse^{\ver,\vertt}}g \in Q_\coarse^{\ver,\vertt}$ as the Petrov--Galerkin projection of $g$ into $Q_\coarse^{\ver,\vertt}$ with the test space $\widehat Q_\coarse^{\ver,\vertt}\circ\F^{-1} = \set{\widehat q_\coarse \circ \F^{-1}}{\widehat q_\coarse\in \widehat Q_\coarse^{\ver,\vertt}}$, i.e.,
\begin{align}\label{eq:projection_Q_a_b_char}
 \dual{\Upsilon_{Q_\coarse^{\ver,\vertt}}g}{q_\coarse}_{\omega_\vertt} = \dual{g}{q_\coarse}_{\omega_\vertt} \quad\text{for all }q_\coarse\in \widehat Q_\coarse^{\ver,\vertt}\circ\F^{-1}.
\end{align}
When the mapping $\F$ is affine, the scaling factor $\det (D\F)^{-1}$ in~\eqref{eq:piola2} is constant, so that the trial space $Q_\coarse^{\ver,\vertt}$ and the test space $\widehat Q_\coarse^{\ver,\vertt}\circ\F^{-1}$ become the same. In such a case, $\Upsilon_{Q_\coarse^{\ver,\vertt}}$ is simply the $L^2(\omega_\vertt)$-orthogonal (Galerkin) projection.
Let
\begin{align*}
    \widehat Q_{\coarse,\mathrm{c}}^{\ver,\vertt} & \eq \set{\widetilde \Phi^{-1}(q_\coarse)}{q_\coarse\in Q_{\coarse,\mathrm{c}}^{\ver,\vertt}} = \widetilde \Phi^{-1}(Q_{\coarse,\mathrm{c}}^{\ver,\vertt}), \\
    \widehat Q_\coarse & \eq \set{\widetilde \Phi^{-1}(q_\coarse)}{q_\coarse\in Q_\coarse}=\widetilde \Phi^{-1}(Q_\coarse).
\end{align*}
For $g\in L^2(\omega_\vertt)$ and $g\in L^2(\Omega)$, respectively,
we define
$\Upsilon_{Q_{\coarse,\mathrm{c}}^{\ver,\vertt}}g \in Q_{\coarse,\mathrm{c}}^{\ver,\vertt}$
and
$\Upsilon_{Q_{\coarse}}g \in Q_{\coarse}$
analogously, i.e.,
\begin{subequations}\label{eq:projections_Q}\begin{align}
 \dual{\Upsilon_{Q_{\coarse,\mathrm{c}}^{\ver,\vertt}}g}{q_\coarse}_{\omega_\vertt} & = \dual{g}{q_\coarse}_{\omega_\vertt} \quad\text{for all }q_\coarse\in \widehat Q_{\coarse,\mathrm{c}}^{\ver,\vertt}\circ\F^{-1}, \label{eq:projection_Q_a_b_c}\\
 \dual{\Upsilon_{Q_\coarse}g}{q_\coarse}_{\Omega} & = \dual{g}{q_\coarse}_{\Omega} \quad\text{for all }q_\coarse\in \widehat Q_\coarse\circ\F^{-1}. \label{eq:projection_Q}
\end{align} \end{subequations}

\subsection{Equilibrated flux on the small patches $\omega_\vertt$}

Let $r_\coarse^\ver$ be given by Definition~\ref{def_ra} and recall the projection operator $\Upsilon_{Q_\coarse^{\ver,\vertt}}$ from Section~\ref{sec_proj}. Then our equilibrated flux on the small patches $\omega_\vertt$ is given by:
\begin{definition}\label{def_flux_ab}
For all nodes $\ver\in\VV_\coarse$ and all vertices $\vertt\in\VV_\coarse^\ver$, let
\begin{align}\label{eq:sigma beta b}
\ssigma_\coarse^{\ver,\vertt}\eq\argmin{\substack{\bv_\coarse\in\V_\coarse^{\ver,\vertt}\\
\Dv\bv_\coarse=\Upsilon_{Q_\coarse^{\ver,\vertt}}(f\psi_\ver \psi_\vertt-\nabla u_\coarse{\cdot}\nabla(\psi_\ver \psi_\vertt) -\nabla r_\coarse^\ver{\cdot}\nabla \psi_\vertt)}}
\norm{\bv_\coarse+\psi_\vertt(\psi_\ver \nabla u_\coarse+\nabla r_\coarse^\ver)}{\omega_\vertt}.
\end{align}
\end{definition}
Figure~\ref{fig:procedure}, steps 4)--5), gives an illustration in the spline setting. From~\eqref{eq:r beta_2}, we know that $-(\nabla r^\ver + \psi_\ver \nabla u_\coarse)$ lies in $\H_0(\div,\omega_\ver)$ with the divergence equal to $f \psi_\ver - \nabla u_\coarse {\cdot} \nabla \psi_\ver$. Thus, its cut-off by $\psi_\vertt$, $-\psi_\vertt(\psi_\ver \nabla u_\coarse + \nabla r^\ver)$, lies in $\H_0(\div,\omega_\vertt)$ with the divergence equal to $f\psi_\ver \psi_\vertt-\nabla u_\coarse{\cdot}\nabla(\psi_\ver \psi_\vertt) -\nabla r^\ver{\cdot}\nabla \psi_\vertt$. Similarly to~\eqref{eq:r beta_3}, it can be characterized implicitly by
\begin{align} \label{eq:r beta_2_dual}
\argmin{\substack{\bv \in \H_0(\div,\omega_\vertt)\\
\Dv \bv = f\psi_\ver \psi_\vertt-\nabla u_\coarse{\cdot}\nabla(\psi_\ver \psi_\vertt) -\nabla r^\ver{\cdot}\nabla \psi_\vertt}}
\norm{\bv + \psi_\vertt ( \psi_\ver \nabla u_\coarse + \nabla r^\ver)}{\omega_\vertt}.
\end{align}
The flux $\ssigma_\coarse^{\ver,\vertt}$ from~\eqref{eq:sigma beta b} is then its discrete approximation.

The following lemma guarantees the existence and uniqueness of the minimizer of Definition~\ref{def_flux_ab}:

\begin{lemma}\label{lem:minimizer}
There exists a unique minimizer $\ssigma_\coarse^{\ver,\vertt}$ of~\eqref{eq:sigma beta b}.
\end{lemma}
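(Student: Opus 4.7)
The plan is to recognize \eqref{eq:sigma beta b} as the $L^2(\omega_\vertt)$-projection of the fixed element $-\psi_\vertt(\psi_\ver\nabla u_\coarse+\nabla r_\coarse^\ver)$ onto a non-empty, closed, convex affine subspace of the finite-dimensional space $\V_\coarse^{\ver,\vertt}$, and then invoke the Hilbert-space projection theorem. There are essentially only two things to verify: that the datum on the right-hand side of the divergence constraint is well-defined and lies in $Q_\coarse^{\ver,\vertt}$, and that the constraint set is non-empty.

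First I would check that the argument of $\Upsilon_{Q_\coarse^{\ver,\vertt}}$ makes sense in $L^2(\omega_\vertt)$. By Assumptions~\ref{ass:partition of unity} and~\ref{ass:partition of unity2}, $\psi_\ver\in W^{1,\infty}(\Omega)$ and $\psi_\vertt\in W^{1,\infty}(\omega_\ver)$; combined with $f\in L^2(\Omega)$, $\nabla u_\coarse\in[L^2(\Omega)]^d$, and $\nabla r_\coarse^\ver\in[L^2(\omega_\ver)]^d$ (where $r_\coarse^\ver\in V_\coarse^\ver\subset H_*^1(\omega_\ver)$ by Definition~\ref{def_ra}), this shows
\[
f\psi_\ver \psi_\vertt-\nabla u_\coarse{\cdot}\nabla(\psi_\ver \psi_\vertt) -\nabla r_\coarse^\ver{\cdot}\nabla \psi_\vertt \in L^2(\omega_\vertt).
\]
Hence, using the definition~\eqref{eq:projection_Q_a_b_char}, the projection
\[
g_\coarse \eq \Upsilon_{Q_\coarse^{\ver,\vertt}}\bigl(f\psi_\ver \psi_\vertt-\nabla u_\coarse{\cdot}\nabla(\psi_\ver \psi_\vertt) -\nabla r_\coarse^\ver{\cdot}\nabla \psi_\vertt\bigr)
\]
is well-defined and belongs to $Q_\coarse^{\ver,\vertt}$ by construction.

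Next, and this is the key structural point, I would appeal to the compatibility condition~\eqref{eq:QV compatibility} from Assumption~\ref{ass:QV subsets}, which states $\Dv\V_\coarse^{\ver,\vertt}=Q_\coarse^{\ver,\vertt}$. Since $g_\coarse\in Q_\coarse^{\ver,\vertt}$, there is at least one $\widetilde\bv_\coarse\in\V_\coarse^{\ver,\vertt}$ with $\Dv\widetilde\bv_\coarse=g_\coarse$. The feasible set of~\eqref{eq:sigma beta b} is therefore the affine subspace $\widetilde\bv_\coarse+\ker(\Dv|_{\V_\coarse^{\ver,\vertt}})$, which is non-empty, finite-dimensional (hence closed), and convex.

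To finish, I would note that the objective $\bv_\coarse\mapsto\norm{\bv_\coarse+\psi_\vertt(\psi_\ver\nabla u_\coarse+\nabla r_\coarse^\ver)}{\omega_\vertt}$ is the $L^2(\omega_\vertt)$-distance from a fixed vector field, which is strictly convex and coercive on any affine subspace. The projection theorem in the Hilbert space $[L^2(\omega_\vertt)]^d$ therefore yields a unique minimizer $\ssigma_\coarse^{\ver,\vertt}$ in the closed convex feasible set. I do not expect any serious obstacle: the only non-trivial input is the surjectivity~\eqref{eq:QV compatibility}, which is exactly what the abstract framework of Section~\ref{sec:assumptions} was arranged to provide, and which is verified in our concrete IGA setting through~\eqref{eq:div_V}.
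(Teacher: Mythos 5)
Your proof is correct and matches the paper's argument: the paper likewise observes that $\Upsilon_{Q_\coarse^{\ver,\vertt}}g \in Q_\coarse^{\ver,\vertt} = \Dv\V_\coarse^{\ver,\vertt}$ makes the feasible set non-empty and then invokes standard convexity arguments for existence and uniqueness. The paper additionally spells out the equivalent Euler--Lagrange characterization as a square linear system and checks that zero data forces the zero solution, but this is just an alternative phrasing of the same projection argument you give.
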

\begin{proof}
Let us abbreviate
\begin{align}\label{eq_g_def}
    g \eq f\psi_\ver \psi_\vertt-\nabla u_\coarse{\cdot}\nabla(\psi_\ver \psi_\vertt) -\nabla r_\coarse^\ver{\cdot}\nabla \psi_\vertt
\end{align}
and
\[
\ttau\eq\psi_\vertt(\psi_\ver \nabla u_\coarse+\nabla r_\coarse^\ver).
\]
By definition of $\Upsilon_{Q_\coarse^{\ver,\vertt}}$, $\Upsilon_{Q_\coarse^{\ver,\vertt}}g \in Q_\coarse^{\ver,\vertt} =  \nabla{\cdot}\V_\coarse^{\ver,\vertt}$, so that the minimization set is nonempty, and existence and uniqueness follow by standard convexity arguments, see, \eg, \cite[Proposition~1.2]{Eke_Tem_con_anal_99}.
In more details, problem~\eqref{eq:sigma beta b} is equivalent to finding $\ssigma_\coarse^{\ver,\vertt}$ with $\nabla{\cdot}\ssigma_\coarse^{\ver,\vertt}=\Upsilon_{Q_\coarse^{\ver,\vertt}}g$ such that
\begin{align}\label{eq:computation of sigma}
\dual{\ssigma_\coarse^{\ver,\vertt}}{\bv_\coarse}_{\omega_\vertt}= -\dual{\ttau}{\bv_\coarse}_{\omega_\vertt}\quad\text{for all } \bv_\coarse\in\V_\coarse^{\ver,\vertt}\text{ with }\nabla{\cdot}\bv_\coarse=0,
\end{align}
which is a square linear system. Existence and uniqueness thus follow when $\ssigma_\coarse^{\ver,\vertt}=0$ for zero data. Let thus $g=0$ and $\ttau=\nv$. Since $\Upsilon_{Q_\coarse^{\ver,\vertt}}0=0$ follows from~\eqref{eq:projection_Q_a_b_char}, we can take $\bv_\coarse=\ssigma_\coarse^{\ver,\vertt}$ in~\eqref{eq:computation of sigma}, which implies  $\norm{\ssigma_\coarse^{\ver,\vertt}}{\omega_\vertt}=0$ and thus $\ssigma_\coarse^{\ver,\vertt}=0$.
\end{proof}

Recalling~\eqref{eq:projection_Q_a_b_char} and~\eqref{eq:projection_Q_a_b_c}, the following is an important extension of Lemma~\ref{lem:minimizer}:

\begin{lemma}\label{lem:easier projection}
The projection $\Upsilon_{Q_\coarse^{\ver,\vertt}}$ in~\eqref{eq:sigma beta b} can be replaced by $\Upsilon_{Q_{\coarse,\mathrm{c}}^{\ver,\vertt}}$, i.e.,
\begin{align}\label{eq:a_b_c}
(\Upsilon_{Q_\coarse^{\ver,\vertt}}-\Upsilon_{Q_{\coarse,\mathrm{c}}^{\ver,\vertt}})(f\psi_\ver \psi_\vertt-\nabla u_\coarse{\cdot}\nabla(\psi_\ver \psi_\vertt) -\nabla r_\coarse^\ver{\cdot}\nabla \psi_\vertt) = 0.
\end{align}
\end{lemma}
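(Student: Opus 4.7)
My plan is to reduce the claim to a single scalar identity: that the argument $g \eq f\psi_\ver\psi_\vertt-\nabla u_\coarse\cdot\nabla(\psi_\ver\psi_\vertt)-\nabla r_\coarse^\ver\cdot\nabla\psi_\vertt$ has vanishing mean value on $\omega_\vertt$. The case $\psi_\ver\psi_\vertt\notin H_0^1(\Omega)$ is trivial since $Q_{\coarse,\mathrm{c}}^{\ver,\vertt}=Q_\coarse^{\ver,\vertt}$ by \eqref{eq:Q inclusion} and both projections coincide by definition. For the case $\psi_\ver\psi_\vertt\in H_0^1(\Omega)$, I observe that $\widehat Q_{\coarse,\mathrm{c}}^{\ver,\vertt}=\widehat Q_\coarse^{\ver,\vertt}\oplus\R$, so $\Upsilon_{Q_{\coarse,\mathrm{c}}^{\ver,\vertt}}$ adds, relative to $\Upsilon_{Q_\coarse^{\ver,\vertt}}$, one test equation against the constant $1$ and one degree of freedom $\widetilde\Phi(c)$ in the image. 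Writing $\Upsilon_{Q_{\coarse,\mathrm{c}}^{\ver,\vertt}}g=q+\widetilde\Phi(c)$ with $q\in Q_\coarse^{\ver,\vertt}$ and $c\in\R$, a change of variables gives $(\widetilde\Phi(c),\widehat r\circ\F^{-1})_{\omega_\vertt}=c\int_{\widehat\omega_\vertt}\widehat r=0$ for every $\widehat r\in\widehat Q_\coarse^{\ver,\vertt}\subset L_*^2(\widehat\omega_\vertt)$, which forces $q=\Upsilon_{Q_\coarse^{\ver,\vertt}}g$. Testing the extra equation with the constant $1$ then yields $c|\widehat\omega_\vertt|=(g,1)_{\omega_\vertt}$, since $\Upsilon_{Q_\coarse^{\ver,\vertt}}g\in L_*^2(\omega_\vertt)$. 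Hence the lemma reduces to $(g,1)_{\omega_\vertt}=0$.

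\textbf{Key identity.} To verify $(g,1)_{\omega_\vertt}=0$, I first extend $\psi_\vertt$ by zero from $\omega_\vertt$ to $\omega_\ver$; by \eqref{eq_psi_b} this extension remains in $W^{1,\infty}(\omega_\ver)$, so I may enlarge the domain of integration:
\begin{align*}
(g,1)_{\omega_\vertt}=\int_{\omega_\ver}\bigl[f\psi_\ver\psi_\vertt-\nabla u_\coarse\cdot\nabla(\psi_\ver\psi_\vertt)-\nabla r_\coarse^\ver\cdot\nabla\psi_\vertt\bigr].
\end{align*}
I then distinguish the two subcases of \eqref{eq:b in H01}. If $\psi_\ver\in H_0^1(\Omega)$, I write $\psi_\vertt=v_\coarse+c_0$ with $v_\coarse\in V_\coarse^\ver$ and $c_0\in\R$; the contribution of $v_\coarse$ vanishes by testing \eqref{eq:rh beta} with $v_\coarse$, while the contribution of $c_0$ equals $c_0\int_{\omega_\ver}[f\psi_\ver-\nabla u_\coarse\cdot\nabla\psi_\ver]$, which vanishes by the Galerkin orthogonality \eqref{eq:Galerkin for beta}. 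If $\psi_\ver\notin H_0^1(\Omega)$, then \eqref{eq:b in H01} gives $\psi_\vertt\in V_\coarse^\ver$ directly, and testing \eqref{eq:rh beta} with $v_\coarse=\psi_\vertt$ immediately produces cancellation.

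\textbf{Main obstacle.} The computations themselves are light; the delicacy lies in keeping track of the two partition-of-unity regimes together with the Piola correction $\widetilde\Phi(c)$. One must carefully exploit that the test space $\widehat Q_\coarse^{\ver,\vertt}$ is constrained to $L_*^2(\widehat\omega_\vertt)$ precisely so that the constant perturbation $\widetilde\Phi(c)$ is orthogonal to it in the physical inner product, which is what makes the system decouple into ``mean-free part equals $\Upsilon_{Q_\coarse^{\ver,\vertt}}g$'' plus ``constant part controlled by $(g,1)_{\omega_\vertt}$''. Once that decoupling is in place, the assumptions \eqref{eq:b in H01}, \eqref{eq:rh beta}, and \eqref{eq:Galerkin for beta} fit together so that the remaining mean-value identity comes out essentially for free.
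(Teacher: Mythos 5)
Your proposal is correct and follows essentially the same route as the paper: you reduce the claim to the Neumann compatibility identity $(g,1)_{\omega_\vertt}=0$ and then verify it via the local problem~\eqref{eq:rh beta} together with the Galerkin orthogonality~\eqref{eq:Galerkin for beta}, distinguishing the subcases $\psi_\ver\in H_0^1(\Omega)$ and $\psi_\ver\notin H_0^1(\Omega)$ exactly as the paper does. Your explicit decoupling $\Upsilon_{Q_{\coarse,\mathrm{c}}^{\ver,\vertt}}g=q+\widetilde\Phi(c)$ and the substitution $\psi_\vertt=v_\coarse+c_0$ are simply more detailed renderings of the same steps the paper carries out more tersely (there with the mean-free test function $\psi_\vertt-|\omega_\ver|^{-1}(\psi_\vertt,1)_{\omega_\ver}$).
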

\begin{proof}
From~\eqref{eq:Q inclusion}, the spaces $Q_\coarse^{\ver,\vertt}$ and $Q_{\coarse,\mathrm{c}}^{\ver,\vertt}$ only differ if $\psi_\ver\psi_\vertt\in H_0^1(\Omega)$, the difference only being transformed constants that remove the constraint.
Let thus $\psi_\ver\psi_\vertt\in H_0^1(\Omega)$ and recall the notation~\eqref{eq_g_def}. From~$\V_\coarse^{\ver,\vertt} \subset \H_0(\div,\omega_\vertt)$ (recall~\eqref{eq:H0 div for b}) and~\eqref{eq:sigma beta b}, $0=\dual{\Dv\ssigma_\coarse^{\ver,\vertt}}{1}_{\omega_\vertt} = \dual{\Upsilon_{Q_\coarse^{\ver,\vertt}}g}{1}_{\omega_\vertt}$.
Taking into account~\eqref{eq:projection_Q_a_b_char} and~\eqref{eq:projection_Q_a_b_c},
we thus only need to verify the Neumann compatibility condition $\dual{g}{1}_{\omega_\vertt}=0$.
Hence, test $g$ with $1$ and obtain that
\begin{align}\label{eq:aux1:minimizer}
\dual{g}{1}_{\omega_\vertt}&=\dual{f\psi_\ver}{\psi_\vertt}_{\omega_\vertt}
-\dual{\nabla u_\coarse{\cdot}\nabla\psi_\ver}{\psi_\vertt}_{\omega_\vertt} - \dual{\psi_\ver\nabla u_\coarse}{\nabla \psi_\vertt}_{\omega_\vertt}
- \dual{\nabla r_\coarse^\ver}{\nabla \psi_\vertt}_{\omega_\vertt}.
\end{align}
We consider two cases.

First, let $\psi_\ver\in H_0^1(\Omega)$.
Then, we use that $\supp(\psi_\vertt)\subseteq\supp(\psi_\ver)$ from~\eqref{eq_psi_b}, \eqref{eq:rh beta} with the test function $v_\coarse=\psi_\vertt-|\omega_\ver|^{-1}\dual{\psi_\vertt}{1}_{\omega_\ver}$ having zero mean value on $\omega_\ver$  (which is possible due to~\eqref{eq:b in H01}), and~\eqref{eq:Galerkin for beta}  to see for the last term in~\eqref{eq:aux1:minimizer} that
\begin{eqnarray*}
\dual{\nabla r_\coarse^\ver}{\nabla \psi_\vertt}_{\omega_\vertt}&=& \dual{\nabla r_\coarse^\ver}{\nabla (\psi_\vertt-|\omega_\ver|^{-1}\dual{\psi_\vertt}{1}_{\omega_\ver})}_{\omega_\ver}
\\
 &\reff{eq:rh beta}=&
\dual{f\psi_\ver-\nabla u_\coarse{\cdot}\nabla\psi_\ver}{\psi_\vertt-|\omega_\ver|^{-1}\dual{\psi_\vertt}{1}_{\omega_\ver}}_{\omega_\ver} - \dual{\psi_\ver\nabla u_\coarse}{\nabla \psi_\vertt}_{\omega_\ver}
\\
&\reff{eq:Galerkin for beta}=&
\dual{f\psi_\ver-\nabla u_\coarse{\cdot}\nabla\psi_\ver}{\psi_\vertt}_{\omega_\vertt} - \dual{\psi_\ver\nabla u_\coarse}{\nabla \psi_\vertt}_{\omega_\vertt}.
\end{eqnarray*}
With~\eqref{eq:aux1:minimizer}, we then see that $\dual{g}{1}_{\omega_\vertt}=0$.

It remains to consider $\psi_\ver\not\in H_0^1(\Omega)$.
In this case, we obtain again with $\supp(\psi_\vertt)\subseteq\supp(\psi_\ver)$ and~\eqref{eq:rh beta} with the test function $v_\coarse=\psi_\vertt$ (which is possible due to~\eqref{eq:b in H01}) that
\begin{align*}
\dual{\nabla r_\coarse^\ver}{\nabla \psi_\vertt}_{\omega_\vertt}&= \dual{\nabla r_\coarse^\ver}{\nabla \psi_\vertt}_{\omega_\ver}
\reff{eq:rh beta}=
\dual{f\psi_\ver-\nabla u_\coarse{\cdot}\nabla\psi_\ver}{\psi_\vertt}_{\omega_\ver} - \dual{\psi_\ver\nabla u_\coarse}{\nabla \psi_\vertt}_{\omega_\ver}
\\
&=\dual{f\psi_\ver-\nabla u_\coarse{\cdot}\nabla\psi_\ver}{\psi_\vertt}_{\omega_\vertt} - \dual{\psi_\ver\nabla u_\coarse}{\nabla \psi_\vertt}_{\omega_\vertt}.
\end{align*}
With~\eqref{eq:aux1:minimizer}, we again see that $\dual{g}{1}_{\omega_\vertt}=0$, which concludes the proof.
\end{proof}

\subsection{Equilibrated flux on the large patches and the final equilibrated flux}

As in the simplified setting of Section~\ref{sec:inex_iga}, our equilibrated flux on the large patches $\omega_\ver$ and the final equilibrated flux are given by:

\begin{definition}\label{def_flux_a} From~\eqref{eq:sigma beta b}, define the patchwise fluxes, for all $\ver\in\VV_\coarse$,
\begin{subequations}\label{eq_flux}
\begin{align}\label{eq_flux_a}
 \ssigma_\coarse^\ver\eq\sum_{\vertt\in\VV_\coarse^\ver} \ssigma_\coarse^{\ver,\vertt}
\end{align}
and the equilibrated flux
\begin{align}\label{eq_flux_tot}
\ssigma_\coarse\eq\sum_{\ver\in\VV_\coarse}\ssigma_\coarse^\ver.
\end{align}
\end{subequations}
\end{definition}

\begin{remark}\label{rem2:naive_approach}
With the definitions of Remark~\ref{rem:naive_approach}, the discrete residual function $r_\coarse^\ver$ from~\eqref{eq:rh beta} is simply zero, so that, up to the projection~$\Upsilon_{Q_\coarse^{\ver,\vertt}}$, the local equilibrated flux $\ssigma_\coarse^{\ver,\vertt}$ from~\eqref{eq:sigma beta b} coincides with $\ssigma_\coarse^\ver$ from~\eqref{eq_flux_a} and \eqref{eq:sigma_a}.
\end{remark}

We first discuss the contributions $\ssigma_\coarse^\ver$, which can be seen as discrete approximations of $-(\nabla r^\ver + \psi_\ver \nabla u_\coarse)$ from~\eqref{eq:r beta_3}. Recalling~\eqref{eq:H0 div for a}, we have.

\begin{lemma}\label{lem:equiflux in beta}
For all $\ver\in\VV_\coarse$, it holds that $\ssigma_\coarse^\ver\in \H_{0}(\div,\omega_\ver)$ and
\begin{align}\label{eq:equiflux in beta}
\dual{\nabla{\cdot}\ssigma_\coarse^\ver}{q_\coarse}_{\omega_\ver}= \dual{f\psi_\ver-\nabla u_\coarse{\cdot}\nabla\psi_\ver}{q_\coarse}_{\omega_\ver}\quad\text{for all }q_\coarse\in \widehat Q_\coarse\circ\F^{-1}.
\end{align}
\end{lemma}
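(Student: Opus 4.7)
My plan is to handle the two claims of the lemma---that $\ssigma_\coarse^\ver\in\H_0(\div,\omega_\ver)$ and that it realizes the stated divergence relation---by decomposing $\ssigma_\coarse^\ver$ via~\eqref{eq_flux_a} into its patchwise contributions $\ssigma_\coarse^{\ver,\vertt}$ and analyzing each piece separately. First I would argue that each $\ssigma_\coarse^{\ver,\vertt}\in\V_\coarse^{\ver,\vertt}\subset\H_0(\div,\omega_\vertt)$, extended by zero to $\omega_\ver$, yields an $\H(\div,\omega_\ver)$ function: by~\eqref{eq:H0 div for b} its normal trace vanishes wherever $\psi_\ver\psi_\vertt=0$, and the hat function $\psi_\vertt$ vanishes on all of $\partial\omega_\vertt\setminus\partial\omega_\ver$, ruling out any spurious normal-trace jump across interior interfaces of the small patches inside $\omega_\ver$.

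Next, I would verify the boundary condition defining $\H_0(\div,\omega_\ver)$ in~\eqref{eq:H0 div for a} by a split into cases. If $\psi_\ver\in H_0^1(\Omega)$, then $\psi_\ver=0$ on $\partial\omega_\ver$, hence $\psi_\ver\psi_\vertt=0$ on the portion of $\partial\omega_\vertt$ lying on $\partial\omega_\ver$, and~\eqref{eq:H0 div for b} forces the normal trace of every $\ssigma_\coarse^{\ver,\vertt}$ to vanish there; summing, $\ssigma_\coarse^\ver\cdot\n_{\omega_\ver}=0$ on the whole of $\partial\omega_\ver$, as required. In the complementary case $\psi_\ver\notin H_0^1(\Omega)$, the same argument applied only on $\partial\omega_\ver\cap\psi_\ver^{-1}(\{0\})$ recovers the second alternative of~\eqref{eq:H0 div for a}.

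For the divergence identity~\eqref{eq:equiflux in beta}, I would first invoke Lemma~\ref{lem:easier projection} to replace $\Upsilon_{Q_\coarse^{\ver,\vertt}}$ by $\Upsilon_{Q_{\coarse,\mathrm{c}}^{\ver,\vertt}}$ in the divergence prescription of~\eqref{eq:sigma beta b}. For any test function $q_\coarse\in\widehat Q_\coarse\circ\F^{-1}$, the inclusion~\eqref{eq:Q inclusion} ensures that $q_\coarse|_{\omega_\vertt}$ belongs to the admissible test space of $\Upsilon_{Q_{\coarse,\mathrm{c}}^{\ver,\vertt}}$ appearing in~\eqref{eq:projection_Q_a_b_c}, whence
\begin{align*}
\dual{\Dv\ssigma_\coarse^{\ver,\vertt}}{q_\coarse}_{\omega_\vertt}
=\dual{f\psi_\ver\psi_\vertt-\nabla u_\coarse{\cdot}\nabla(\psi_\ver\psi_\vertt)-\nabla r_\coarse^\ver{\cdot}\nabla\psi_\vertt}{q_\coarse}_{\omega_\vertt}.
\end{align*}
Summing over $\vertt\in\VV_\coarse^\ver$, the zero-extension property from the first paragraph identifies the left-hand side with $\dual{\Dv\ssigma_\coarse^\ver}{q_\coarse}_{\omega_\ver}$, while the partition-of-unity property~\eqref{eq:partition of unity2}, giving $\sum_\vertt\psi_\vertt=1$ and $\sum_\vertt\nabla\psi_\vertt=\nv$ on $\omega_\ver$, collapses the right-hand side to exactly $\dual{f\psi_\ver-\nabla u_\coarse{\cdot}\nabla\psi_\ver}{q_\coarse}_{\omega_\ver}$.

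The only mildly delicate step is the normal-trace bookkeeping on interfaces between adjacent small patches $\omega_\vertt$ and on their intersections with $\partial\omega_\ver$, but this is settled uniformly by the single observation that $\psi_\vertt=0$ on the whole of $\partial\omega_\vertt\setminus\partial\omega_\ver$; the remainder is essentially algebra around Lemma~\ref{lem:easier projection} and the two partition-of-unity identities.
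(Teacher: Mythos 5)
Your proof is correct and mirrors the paper's own argument: it handles $\H_0(\div,\omega_\ver)$-membership by the same case analysis and the same observation that $\psi_\vertt$ vanishes on $\partial\omega_\vertt\setminus\partial\omega_\ver$ (with $\supp(\psi_\vertt)\subseteq\supp(\psi_\ver)$), and it derives~\eqref{eq:equiflux in beta} by the same chain of using Lemma~\ref{lem:easier projection}, the inclusion~\eqref{eq:Q inclusion}, and the partition of unity~\eqref{eq:partition of unity2}. The only cosmetic difference is that you make the zero-extension argument across small-patch interfaces a little more explicit, which the paper leaves implicit.
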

\begin{proof}
By Definition~\ref{def_flux_ab}, we have $\ssigma_\coarse^{\ver,\vertt}\in \V_\coarse^{\ver,\vertt}\subset \H_0(\div,\omega_\vertt)$ for all $\vertt\in\VV_\coarse^\ver$. If $\psi_\ver\in H_0^1(\Omega)$, then $\psi_\ver\psi_\vertt\in H_0^1(\Omega)$, and $\ssigma_\coarse^{\ver,\vertt}{\cdot}\n_{\omega_\vertt}=0$ on $\partial\omega_\vertt$ from~\eqref{eq:H0 div for b}, so that $\ssigma_\coarse^{\ver,\vertt}{\cdot}\n_{\omega_\ver}=0$ on $\partial\omega_\ver$ by virtue of $\supp(\psi_\vertt)\subseteq\supp(\psi_\ver)$, which we assume in~\eqref{eq_psi_b}. If $\psi_\ver\not\in H_0^1(\Omega)$, then $\psi_\ver\psi_\vertt\in H_0^1(\Omega)$ may still hold, in which case the previous reasoning applies. If $\psi_\ver\not\in H_0^1(\Omega)$ and $\psi_\ver\psi_\vertt\not\in H_0^1(\Omega)$, then the homogeneous Neumann (no flow) boundary condition only applies on $\partial\omega_\vertt\cap(\psi_\ver\psi_\vertt)^{-1}(\{0\})$ in~\eqref{eq:H0 div for b}, and, in the sum of contributions over all $\vertt\in\VV_\coarse^\ver$, on $\partial\omega_\ver\cap(\psi_\ver)^{-1}(\{0\})$. Thus $\ssigma_\coarse^\ver\in \H_0(\div,\omega_\ver)$.

To see~\eqref{eq:equiflux in beta}, we use again definitions~\eqref{eq:sigma beta b} and~\eqref{eq_flux_a}, along with the partition of unity property~\eqref{eq:partition of unity2} and~\eqref{eq:a_b_c}. Let $q_\coarse\in \widehat Q_\coarse\circ\F^{-1}$ be fixed. Then
\begin{align*}
\dual{\nabla{\cdot}\ssigma_\coarse^\ver}{q_\coarse}_{\omega_\ver}
& =
\sum_{\vertt\in\VV_\coarse^\ver} \dual{\nabla{\cdot}\ssigma_\coarse^{\ver,\vertt}}{q_\coarse}_{\omega_\vertt}  \\
& \refff{eq:sigma beta b}{eq:a_b_c}=\sum_{\vertt\in\VV_\coarse^\ver}
\dual{\Upsilon_{Q_{\coarse,\mathrm{c}}^{\ver,\vertt}}(f\psi_\ver \psi_\vertt -\nabla u_\coarse{\cdot}\nabla(\psi_\ver \psi_\vertt)-\nabla r_\coarse^\ver{\cdot}\nabla \psi_\vertt)}{q_\coarse}_{\omega_\vertt}\\
& \refff{eq:projection_Q_a_b_c}{eq:Q inclusion}=\sum_{\vertt\in\VV_\coarse^\ver}
\dual{f\psi_\ver \psi_\vertt -\nabla u_\coarse{\cdot}\nabla(\psi_\ver \psi_\vertt)-\nabla r_\coarse^\ver{\cdot}\nabla \psi_\vertt}{q_\coarse}_{\omega_\vertt}\\
& \reff{eq:partition of unity2}= \dual{f\psi_\ver-\nabla u_\coarse{\cdot}\nabla\psi_\ver}{q_\coarse}_{\omega_\ver},
\end{align*}
where we have crucially used Lemma~\ref{lem:easier projection} (on the second line) and the inclusion property~\eqref{eq:Q inclusion} (on the third line).
\end{proof}

We now show that $\ssigma_\coarse$ is indeed an equilibrated flux:

\begin{lemma}
\label{lem:equiflux}
It holds that $\ssigma_\coarse\in \H(\div,\Omega)$ with
\begin{align}\label{eq:equiflux}
\dual{\Dv\ssigma_\coarse}{q_\coarse}_{\Omega} = \dual{f}{q_\coarse}_{\Omega} \quad\text{for all }q_\coarse\in \widehat Q_\coarse\circ\F^{-1},
\end{align}
or equivalently, by virtue of~\eqref{eq:projection_Q}, $\Upsilon_{Q_\coarse}(f-\Dv\ssigma_\coarse)=0$.
\end{lemma}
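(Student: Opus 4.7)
My plan is to combine Lemma~\ref{lem:equiflux in beta} with the partition-of-unity property~\eqref{eq:partition of unity}. The overall structure is: first verify the regularity $\ssigma_\coarse \in \H(\div,\Omega)$ from the patchwise regularity of the $\ssigma_\coarse^\ver$; second, test $\Dv\ssigma_\coarse$ against an arbitrary $q_\coarse \in \widehat Q_\coarse\circ\F^{-1}$, split into contributions over patches, apply Lemma~\ref{lem:equiflux in beta}, and collapse the sum via $\sum_\ver \psi_\ver = 1$ and hence $\sum_\ver \nabla\psi_\ver = \nv$.

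For the regularity step, Lemma~\ref{lem:equiflux in beta} gives $\ssigma_\coarse^\ver \in \H_0(\div,\omega_\ver)$. In the interior case $\psi_\ver \in H_0^1(\Omega)$, the normal trace vanishes on all of $\partial\omega_\ver$, so the zero-extension to $\Omega$ lies in $\H(\div,\Omega)$. In the boundary case, the normal trace only vanishes on $\partial\omega_\ver \cap (\psi_\ver)^{-1}(\{0\})$, but the complementary part of $\partial\omega_\ver$ is contained in $\partial\Omega$ (since $\psi_\ver$ vanishes wherever $\partial\omega_\ver$ meets the interior of $\Omega$), and so the zero-extension still belongs to $\H(\div,\Omega)$. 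Summing over $\ver \in \VV_\coarse$ in~\eqref{eq_flux_tot} yields $\ssigma_\coarse \in \H(\div,\Omega)$, with distributional divergence equal, on each $\omega_\ver$, to $\Dv\ssigma_\coarse^\ver$ (the contributions from neighbouring $\ver'$ adding via their own patchwise divergences).

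For the divergence identity, fix $q_\coarse \in \widehat Q_\coarse\circ\F^{-1}$ and compute
\begin{align*}
\dual{\Dv\ssigma_\coarse}{q_\coarse}_\Omega
&= \sum_{\ver \in \VV_\coarse} \dual{\Dv\ssigma_\coarse^\ver}{q_\coarse}_{\omega_\ver}
\stackrel{\eqref{eq:equiflux in beta}}{=} \sum_{\ver\in\VV_\coarse} \dual{f\psi_\ver - \nabla u_\coarse\cdot\nabla\psi_\ver}{q_\coarse}_{\omega_\ver} \\
&= \dual{f\sum_{\ver\in\VV_\coarse}\psi_\ver - \nabla u_\coarse\cdot\nabla\Bigl(\sum_{\ver\in\VV_\coarse}\psi_\ver\Bigr)}{q_\coarse}_\Omega
\stackrel{\eqref{eq:partition of unity}}{=} \dual{f}{q_\coarse}_\Omega,
\end{align*}
where each sum may be extended from $\omega_\ver$ to $\Omega$ by zero thanks to the support of $\psi_\ver$, and in the final step $\sum_\ver\psi_\ver = 1$ in $\Omega$ gives both $\sum_\ver \psi_\ver = 1$ and $\sum_\ver\nabla\psi_\ver = \nv$ a.e. in $\Omega$ (recall $\psi_\ver \in W^{1,\infty}(\Omega)$). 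The equivalent formulation $\Upsilon_{Q_\coarse}(f - \Dv\ssigma_\coarse) = 0$ follows immediately from~\eqref{eq:projection_Q}, since $\Dv\ssigma_\coarse \in L^2(\Omega)$ and the test set $\widehat Q_\coarse\circ\F^{-1}$ is precisely the one that defines $\Upsilon_{Q_\coarse}$.

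The proof is essentially a bookkeeping exercise; I expect no genuine obstacle, as Lemma~\ref{lem:easier projection} and Lemma~\ref{lem:equiflux in beta} already absorb the technical work (the latter was where the lowest-order solve~\eqref{eq:rh beta} on large patches plays its role). The only mildly delicate point is to confirm, in the boundary-touching case, that the patch contributions $\ssigma_\coarse^\ver$ glue correctly into $\H(\div,\Omega)$—but this is exactly what is encoded by the choice of $\H_0(\div,\omega_\ver)$ in~\eqref{eq:H0 div for a}.
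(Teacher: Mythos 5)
Your proof is correct and follows essentially the same route as the paper's: glue the patchwise fluxes via the $\H_0(\div,\omega_\ver)$ boundary conditions to get $\ssigma_\coarse \in \H(\div,\Omega)$, then chain Lemma~\ref{lem:equiflux in beta} with the partition-of-unity identity to collapse the sum to $\dual{f}{q_\coarse}_\Omega$. You merely spell out the zero-extension argument in more detail than the paper, which states it as immediate.
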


\begin{proof}
Definition~\ref{def_flux_a}, \eqref{eq:H0 div for a}, and Lemma~\ref{lem:equiflux in beta} immediately imply that $\ssigma_\coarse\in \H(\div,\Omega)$.
To see the second point, we use again Lemma~\ref{lem:equiflux in beta} and the partition of unity property~\eqref{eq:partition of unity}, giving
\begin{align*}
\dual{\nabla{\cdot}\ssigma_\coarse}{q_\coarse}_{\Omega}\reff{eq_flux_tot}=\sum_{\ver\in\VV_\coarse}\dual{\nabla{\cdot}\ssigma_\coarse^\ver}{q_\coarse}_{\omega_\ver}
\reff{eq:equiflux in beta}=\sum_{\ver\in\VV_\coarse}\dual{f\psi_\ver-\nabla u_\coarse{\cdot}\nabla \psi_\ver}{q_\coarse}_{\omega_\ver}
\reff{eq:partition of unity}=\dual{f}{q_\coarse}_{\Omega},
\end{align*}
which concludes the proof.
\end{proof}

\section{A posteriori error estimates}\label{sec:a_post}

We are now ready to present our a posteriori error estimates.

\subsection{Reliability}

With the dual norm
\begin{align*}
\norm{\cdot}{H^{-1}(\Omega)}\eq\sup_{\substack{v\in H_0^1(\Omega)\\ \norm{\nabla v}{\Omega}=1}}\dual{\cdot}{v}_\Omega,
\end{align*}
one immediately  gets the following reliability result:

\begin{proposition}\label{prop:reliable_abs} Let the abstract assumptions of Section~\ref{sec:assumptions} be satisfied, let $u$ solve~\eqref{eq:variational}, and let $u_\coarse$ solve~\eqref{eq:Galerkin}. Let the equilibrated flux be given by Definitions~\ref{def_ra}, \ref{def_flux_ab}, and~\ref{def_flux_a}. Then
\begin{align}\label{eq:reliability}
\norm{\nabla (u-u_\coarse)}{\Omega}\le \norm{\ssigma_\coarse+\nabla u_\coarse}{\Omega} +\norm{(1-\Upsilon_{Q_\coarse})(f-\nabla{\cdot}\ssigma_\coarse)}{H^{-1}(\Omega)}.
\end{align}
\end{proposition}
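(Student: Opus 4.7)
The plan is to follow the standard Prager--Synge-type argument, combining duality with the Green identity and exploiting the partial equilibration of $\ssigma_\coarse$ established in Lemma~\ref{lem:equiflux}. The only tweak is that equilibration here holds only in a projected sense, so the residual must be split into its projected (which vanishes) and unprojected (which yields the oscillation term) parts.

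First, I would characterize the error energy norm by duality, writing
\[
\norm{\nabla(u-u_\coarse)}{\Omega} = \sup_{\substack{v\in H_0^1(\Omega)\\ \norm{\nabla v}{\Omega}=1}} \dual{\nabla(u-u_\coarse)}{\nabla v}_\Omega,
\]
and then rewrite the duality pairing using the weak formulation~\eqref{eq:variational}:
\[
\dual{\nabla(u-u_\coarse)}{\nabla v}_\Omega = \dual{f}{v}_\Omega - \dual{\nabla u_\coarse}{\nabla v}_\Omega.
\]
Since $\ssigma_\coarse\in\H(\div,\Omega)$ by Lemma~\ref{lem:equiflux} and $v\in H_0^1(\Omega)$, the Green formula gives $\dual{\ssigma_\coarse}{\nabla v}_\Omega = -\dual{\Dv\ssigma_\coarse}{v}_\Omega$. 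Adding and subtracting $\dual{\ssigma_\coarse}{\nabla v}_\Omega$ then yields the key identity
\[
\dual{\nabla(u-u_\coarse)}{\nabla v}_\Omega = -\dual{\ssigma_\coarse+\nabla u_\coarse}{\nabla v}_\Omega + \dual{f-\Dv\ssigma_\coarse}{v}_\Omega.
\]

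Next, the first term is controlled by Cauchy--Schwarz, producing exactly $\norm{\ssigma_\coarse+\nabla u_\coarse}{\Omega}$ once $\norm{\nabla v}{\Omega}=1$. For the residual term, I would invoke the equilibration property $\Upsilon_{Q_\coarse}(f-\Dv\ssigma_\coarse)=0$ from Lemma~\ref{lem:equiflux}, which by definition~\eqref{eq:projection_Q} of the Petrov--Galerkin projection means that $\dual{\Upsilon_{Q_\coarse}(f-\Dv\ssigma_\coarse)}{w}_\Omega = 0$ for any $w$; hence trivially
\[
\dual{f-\Dv\ssigma_\coarse}{v}_\Omega = \dual{(1-\Upsilon_{Q_\coarse})(f-\Dv\ssigma_\coarse)}{v}_\Omega,
\]
and this is bounded by $\norm{(1-\Upsilon_{Q_\coarse})(f-\Dv\ssigma_\coarse)}{H^{-1}(\Omega)}$ by the very definition of the dual norm (again using $\norm{\nabla v}{\Omega}=1$). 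Taking the supremum over admissible $v$ and collecting the two bounds finishes the proof.

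Essentially the argument is routine; the only conceptual subtlety is that $\Upsilon_{Q_\coarse}$ is a Petrov--Galerkin (not $L^2$-orthogonal) projection, so one must be careful that the identity $\Upsilon_{Q_\coarse} g = 0 \Leftrightarrow \dual{g}{q}_\Omega=0$ for all $q\in \widehat Q_\coarse\circ\F^{-1}$ really holds. This follows immediately from the definition~\eqref{eq:projection_Q} by evaluating $\dual{\Upsilon_{Q_\coarse} g}{q}_\Omega$, so there is no real obstacle. Thus reliability is obtained essentially for free once the flux has been constructed; all the work lies in Definitions~\ref{def_ra}--\ref{def_flux_a} and Lemma~\ref{lem:equiflux}, which the present proof may quote as black boxes.
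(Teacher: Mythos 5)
Your proof is correct and is essentially identical to the paper's: both use duality, rewrite the pairing via the weak formulation, insert $\ssigma_\coarse$ using Green's identity, split off the first term by Cauchy--Schwarz, and replace $f-\Dv\ssigma_\coarse$ with $(1-\Upsilon_{Q_\coarse})(f-\Dv\ssigma_\coarse)$ by invoking $\Upsilon_{Q_\coarse}(f-\Dv\ssigma_\coarse)=0$ from Lemma~\ref{lem:equiflux}. Your remark on the Petrov--Galerkin nature of the projection is a nice clarification but does not change the route.
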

\begin{proof}
The weak solution definition~\eqref{eq:variational}, the fact that $\ssigma_\coarse\in \H(\div,\Omega)$, the Green theorem, and Lemma~\ref{lem:equiflux} show
\begin{eqnarray*}
\norm{\nabla(u-u_\coarse)}{\Omega}&=&\sup_{\substack{v\in H_0^1(\Omega)\\ \norm{\nabla v}{\Omega}=1}} \big( \dual{f}{v}_\Omega - \dual{\nabla u_\coarse}{\nabla v}_\Omega\big)
\\
&=&\sup_{\substack{v\in H_0^1(\Omega)\\ \norm{\nabla v}{\Omega}=1}} \big( \dual{\nabla{\cdot}\ssigma_\coarse}{v}_\Omega - \dual{\nabla u_\coarse}{\nabla v}_\Omega+\dual{f-\Dv\ssigma_\coarse}{v}_\Omega\big)
\\
&=&\sup_{\substack{v\in H_0^1(\Omega)\\ \norm{\nabla v}{\Omega}=1}} \big(-\dual{\ssigma_\coarse+\nabla u_\coarse}{\nabla v}_\Omega +\dual{(1-\Upsilon_{Q_\coarse})(f-\nabla{\cdot}\ssigma_\coarse)}{v}_\Omega\big)
\\
&\le& \norm{\ssigma_\coarse+\nabla u_\coarse}{\Omega} +\norm{(1-\Upsilon_{Q_\coarse})(f-\nabla{\cdot}\ssigma_\coarse)}{H^{-1}(\Omega)}.
\end{eqnarray*}
\end{proof}

Let $\norm{\cdot}{2}$ denote the spectral norm of a square matrix.
In the particular situation of Section~\ref{sec:discrete spaces}, the $\norm{\cdot}{H^{-1}(\Omega)}$ in~\eqref{eq:reliability} can be further estimated by a computable weighted $L^2$-norm, forming a data oscillation term.

\begin{proposition}\label{prop:reliable}
Suppose that $\widehat Q_\coarse \circ\F^{-1}$ contains the space of piecewise constants with respect to some mesh $\TT_\coarse$ of $\Omega$, as is the case in Section~\ref{sec:discrete spaces}.
Assume that for all $T\in\TT_\coarse$,  $\widehat T\eq\F^{-1}(T)$ is convex.
Then, there holds that
\begin{subequations}
\begin{align}\label{eq:reliability2}
 \norm{\nabla (u-u_\coarse)}{\Omega}\le \norm{\ssigma_\coarse+\nabla u_\coarse}{\Omega} + \osc_\coarse^{\rm rel},
\end{align}
where
\begin{align} \label{eq_osc}
\osc_\coarse^{\rm rel} \eq \Bigg(\sum_{T\in\TT_\coarse} \osc_\coarse^{\rm rel}(T)^2\Bigg)^{1/2} \text{ with }\,
\osc_\coarse^{\rm rel}(T)\eq\frac{C_{\rm rel}}{\pi}\,\diam(\widehat T)  \norm{(1-\Upsilon_{Q_\coarse})(f-\nabla{\cdot}\ssigma_\coarse)}{T}
\end{align}
and
\begin{align}
C_{\rm rel} \eq \norm{\det(D\F)}{\infty,\widehat \Omega}^{1/2}\, \norm{\det(D\F)^{-1}}{\infty,\widehat \Omega} ^{1/2} \,\sup_{\widehat x\in\widehat \Omega}\norm{D\F(\widehat x)}{2}.
\end{align}
\end{subequations}
\end{proposition}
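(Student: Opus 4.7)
The starting point is the abstract reliability bound of Proposition~\ref{prop:reliable_abs}, so the entire task reduces to bounding the dual-norm term $\norm{(1-\Upsilon_{Q_\coarse})(f-\nabla{\cdot}\ssigma_\coarse)}{H^{-1}(\Omega)}$ by the weighted $L^2$ quantity $\osc_\coarse^{\rm rel}$. The plan is to localize this dual norm elementwise over $\TT_\coarse$ using the orthogonality built into the Petrov--Galerkin projection~\eqref{eq:projection_Q}, and then invoke a scaled Poincaré inequality on each convex $\widehat T=\F^{-1}(T)$ after pulling back to the parameter domain.

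The first step is to abbreviate $g\eq f-\nabla{\cdot}\ssigma_\coarse$ and fix an arbitrary $v\in H_0^1(\Omega)$ with $\norm{\nabla v}{\Omega}=1$. Since the assumption that $\widehat Q_\coarse\circ\F^{-1}$ contains the piecewise constants on $\TT_\coarse$ places $1_T$ in the test space, \eqref{eq:projection_Q} gives $\dual{(1-\Upsilon_{Q_\coarse})g}{1}_T=0$ for every $T\in\TT_\coarse$. Hence, splitting the integral over the elements, I may subtract from $v$ on each $T$ an arbitrary constant $c_T\in\R$ without changing the value:
\begin{align*}
\dual{(1-\Upsilon_{Q_\coarse})g}{v}_\Omega=\sum_{T\in\TT_\coarse}\dual{(1-\Upsilon_{Q_\coarse})g}{v-c_T}_T.
\end{align*}
Applying Cauchy--Schwarz elementwise reduces the task to estimating $\norm{v-c_T}{T}$ by $\nabla v$ on $T$ with the weight $\diam(\widehat T)/\pi$ times the geometric factor $C_{\rm rel}$.

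The core estimate is the following scaled Poincaré inequality. Pulling back with $\widehat v\eq v\circ\F\in H^1(\widehat T)$ and choosing $c_T$ as the mean value of $\widehat v$ over $\widehat T$, a change of variables yields
\begin{align*}
\norm{v-c_T}{T}^2\le\norm{\det(D\F)}{\infty,\widehat\Omega}\,\norm{\widehat v-c_T}{\widehat T}^2.
\end{align*}
Since $\widehat T$ is convex, the Payne--Weinberger inequality gives $\norm{\widehat v-c_T}{\widehat T}\le\pi^{-1}\diam(\widehat T)\norm{\widehat\nabla\widehat v}{\widehat T}$. Using $\widehat\nabla\widehat v=(D\F)^\top(\nabla v\circ\F)$ together with the spectral-norm bound $|\widehat\nabla\widehat v|\le\sup_{\widehat x\in\widehat\Omega}\norm{D\F(\widehat x)}{2}\,|\nabla v\circ\F|$ and a second change of variables, I obtain
\begin{align*}
\norm{\widehat\nabla\widehat v}{\widehat T}^2\le\sup_{\widehat x\in\widehat\Omega}\norm{D\F(\widehat x)}{2}^2\,\norm{\det(D\F)^{-1}}{\infty,\widehat\Omega}\,\norm{\nabla v}{T}^2.
\end{align*}
Collecting the three geometric factors exactly reproduces $C_{\rm rel}$.

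Finally, I insert $\norm{v-c_T}{T}\le C_{\rm rel}\pi^{-1}\diam(\widehat T)\norm{\nabla v}{T}$ back into the localized Cauchy--Schwarz bound and apply the discrete Cauchy--Schwarz over the elements, together with $\sum_T\norm{\nabla v}{T}^2=1$, to get $\dual{(1-\Upsilon_{Q_\coarse})g}{v}_\Omega\le\osc_\coarse^{\rm rel}$. Taking the supremum over $v$ gives $\norm{(1-\Upsilon_{Q_\coarse})g}{H^{-1}(\Omega)}\le\osc_\coarse^{\rm rel}$, and combining with Proposition~\ref{prop:reliable_abs} yields \eqref{eq:reliability2}. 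The only mildly delicate point is tracking the two Jacobian weights so that the resulting constant is the symmetric $\norm{\det(D\F)}{\infty}^{1/2}\norm{\det(D\F)^{-1}}{\infty}^{1/2}$ stated in $C_{\rm rel}$; everything else is a direct assembly of orthogonality, Poincaré on convex sets, and a change of variables.
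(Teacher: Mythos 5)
Your proof is correct and follows essentially the same route as the paper: start from Proposition~\ref{prop:reliable_abs}, localize the $H^{-1}$ pairing over $\TT_\coarse$ using the orthogonality of $(1-\Upsilon_{Q_\coarse})(f-\Dv\ssigma_\coarse)$ to piecewise constants, apply Cauchy--Schwarz, and bound $\norm{v-c_T}{T}$ by a scaled Poincar\'e inequality pulled back to the convex parameter element $\widehat T$. The only cosmetic difference is that you re-derive the change-of-variables Poincar\'e bound from Payne--Weinberger and the chain rule, whereas the paper invokes \cite[Corollary~2.2]{vv12} for exactly that estimate (and the paper streamlines the localization by noting $\Upsilon_{Q_\coarse}(f-\Dv\ssigma_\coarse)=0$ via Lemma~\ref{lem:equiflux} rather than invoking the Petrov--Galerkin orthogonality elementwise, but these are equivalent here).
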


\begin{proof}
Let $v\in H_0^1(\Omega)$ with $\norm{\nabla v}{\Omega}=1$ and let $v_\coarse$ be its $L^2(\Omega)$-orthogonal projection onto the space of $\TT_\coarse$-piecewise constants.
Recall from Lemma~\ref{lem:equiflux} that $\Upsilon_{Q_\coarse}(f-\nabla{\cdot}\ssigma_\coarse)=0$.
Hence, the Cauchy--Schwarz inequality and the Poincar\'e inequality as in~\eqref{eq_PF} (with constant $C_{\rm P}(T)$) show that
\begin{align*}
 &\dual{(1-\Upsilon_{Q_\coarse})(f-\nabla{\cdot}\ssigma_\coarse)}{v}_\Omega
 = \dual{f-\nabla{\cdot}\ssigma_\coarse}{v-v_\coarse}_\Omega
 = \sum_{T\in\TT_\coarse}  \dual{f-\nabla{\cdot}\ssigma_\coarse}{v-v_\coarse}_T
 \\
 &\quad\le \sum_{T\in\TT_\coarse}  \norm{f-\nabla{\cdot}\ssigma_\coarse}{T}\,\norm{v-v_\coarse}{T}
 \le \sum_{T\in\TT_\coarse}  \norm{f-\nabla{\cdot}\ssigma_\coarse}{T}\, \diam(T)C_{\rm P}(T)\norm{\nabla v}{T}
 \\
 &\quad \le \Bigg(\sum_{T\in\TT_\coarse} \diam(T)^2 C_{\rm P}(T)^2 \norm{f-\nabla{\cdot}\ssigma_\coarse}{T}^2\Bigg)^{1/2}.
\end{align*}
According to~\cite[Corollary~2.2]{vv12}, it further holds for $\widehat T\eq\F^{-1}(T)$ that
\begin{align*}
 \diam(T)C_{\rm P}(T) \le \diam(\widehat T)\, C_{\rm P}(\widehat T)\, \norm{\det(D\F)}{\infty,\widehat T}^{1/2}\, \norm{\det(D\F)^{-1}}{\infty,\widehat T} ^{1/2}
  \,\sup_{\widehat x\in\widehat T}\norm{D\F(\widehat x)}{2},
\end{align*}
where $C_{\rm P}(\widehat T)$ denotes the Poincar\'e constant of $\widehat T$, which is smaller or equal to $1/\pi$ as we assumed that $\widehat T$ is convex.
\end{proof}

\begin{remark}\label{rem:reliable oscillations}
If there also holds the converse inclusion in~\eqref{eq:Q inclusion}, i.e., $Q_{\coarse,\mathrm{c}}^{\ver,\vertt}\subseteq Q_\coarse$ (where elements in $Q_{\coarse,\mathrm{c}}^{\ver,\vertt}$ are extended by zero outside of $\omega_\vertt$), then $\nabla{\cdot}\ssigma_\coarse\in Q_\coarse$ and thus $(1-\Upsilon_{Q_\coarse})\Dv\ssigma_\coarse=0$.
For the spaces of Section~\ref{sec:discrete spaces}, the converse inclusion is satisfied for uniform refinements $\TT_\coarse$ of $\TT_0$, but not in general.
In this case, the second term in~\eqref{eq:reliability2} is, for smooth $f$, of order $\mathcal{O}(h^{\widetilde p+2})$, where $h$ denotes the maximal diameter in $\TT_\coarse$, cf. \cite[Equation~(3.12b)]{Dol_Ern_Voh_hp_16} and~\cite[Remark~3.6]{ev15}.
\end{remark}

\subsection{Efficiency}

To prove efficiency, we will crucially rely on the patchwise Sobolev spaces from Section~\ref{sec_patc_spaces} and Assumption~\ref{ass:min RT to cont}. We will employ the residual function $r^\ver\in H_*^1(\omega_\ver)$ from~\eqref{eq:r beta}.
The next proposition states local efficiency of the equilibrated flux estimator from Proposition~\ref{prop:reliable}:

\begin{proposition}\label{prop:efficient}
Let $\TT_\coarse$ be a mesh of $\Omega$, as is the case in Section~\ref{sec:discrete spaces}.
For all $T \in \TT_\coarse$, it holds that
\begin{subequations}
\begin{align}
\begin{split}
\norm{\ssigma_\coarse + \nabla u_\coarse}{T}
\le   \sum_{\substack{\ver\in\VV_\coarse\\|\omega_\ver\cap T|>0}} \Big(2\sqrt{1+(C_1+C_2)^2}\, C_4 C_{\rm veff}  \norm{\nabla (u-u_\coarse)}{\omega_\ver}  \\
+ \sqrt{2 C_4}C_{\rm veff} \, \osc_\coarse^{\rm eff}(\omega_\ver,T)\Big),
\end{split}
\end{align}
where
\begin{align}
\notag
&\osc_\coarse^{\rm eff}(\omega_\ver,T)\eq \Bigg(\sum_{\substack{\vertt\in\VV_\coarse^\ver\\ |\omega_\vertt\cap T|>0}}
\big(\diam(\omega_\vertt)^2  C_{\rm PF}(\omega_\vertt)^2 \norm{(1-\Upsilon_{Q_{\coarse,\mathrm{c}}^{\ver,\vertt}})(f\psi_\ver\psi_\vertt-\nabla u_\coarse{\cdot}\nabla(\psi_\ver \psi_\vertt))}{\omega_\vertt} ^2
\\
\label{eq:oscillations}
&\hspace{75mm}+\norm{(1-\Pi_{\RT_\coarse^{\ver,\vertt}}) (\psi_\ver\psi_\vertt\nabla u_\coarse) }{\omega_\vertt}^2  \big)\Bigg)^{1/2}
\end{align}
and $\Pi_{\RT_\coarse^{\ver,\vertt}}:[L^2(\omega_\vertt)]^{d}\to \RT_\coarse^{\ver,\vertt}$ denotes the $L^2$-orthogonal projection.
With $C_{\F}:=\norm{\det(D\F)}{\infty,\widehat\Omega}\,\norm{\det(D\F)^{-1}}{\infty,\widehat\Omega}$, the constant $C_{\rm veff}$ is explicitly given as
\begin{align}
C_{\rm veff} := 2 C_{\rm st}\max\big\{C_3 C_5 + C_1 (C_5+C_6), \,C_5+(C_5+C_6) + C_{\F} C_6\big\}.
\end{align}
\end{subequations}
In the setting of Sections~\ref{sec:setting}--\ref{sec:partitions}, all involved constants depend themselves only on the space dimension $d$, the polynomial degree $\overline p$ from Section~\ref{sec:hierarchical partitions} (which itself depends only on the considered smoothness, see Remark~\ref{rem_p_rob}), and $\max\{\norm{D\F}{\infty,\widehat\Omega},\norm{(D\F)^{-1}}{\infty,\widehat\Omega}\}$.
They do \emph{not} depend on the polynomial degrees $p$ and $\widetilde p$.
\end{proposition}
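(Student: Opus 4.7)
My plan is to pursue the standard $p$-robust efficiency argument, adapted to the two-level partition setting: localize the error by the two partitions of unity, bound the discrete residual lift by Galerkin orthogonality, and reduce each small-patch contribution to a continuous constrained minimization via Assumption~\ref{ass:min RT to cont}, which is then estimated by $H(\div)$-duality against $\nabla(u-u_\coarse)$ plus oscillations.

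More precisely, I would start from $\sum_{\ver}\psi_\ver=1$ to write
\[
\ssigma_\coarse + \nabla u_\coarse = \sum_{\ver\in\VV_\coarse}\bigl(\ssigma_\coarse^\ver + \psi_\ver\nabla u_\coarse\bigr),
\]
then insert the inner partition $\sum_\vertt\psi_\vertt=1$ on $\omega_\ver$ together with $\sum_\vertt\psi_\vertt\nabla r_\coarse^\ver = \nabla r_\coarse^\ver$ to get
\[
\ssigma_\coarse^\ver + \psi_\ver\nabla u_\coarse = \sum_{\vertt\in\VV_\coarse^\ver}\bigl(\ssigma_\coarse^{\ver,\vertt} + \psi_\vertt(\psi_\ver\nabla u_\coarse+\nabla r_\coarse^\ver)\bigr) - \nabla r_\coarse^\ver.
\]
The triangle inequality on $T$ then produces the two ingredients I must bound: $\|\nabla r_\coarse^\ver\|_T$, and the small-patch quantities $\|\ssigma_\coarse^{\ver,\vertt}+\psi_\vertt(\psi_\ver\nabla u_\coarse+\nabla r_\coarse^\ver)\|_{\omega_\vertt}$ (the latter is precisely the optimal value of the minimization~\eqref{eq:sigma beta b}).

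Next, I would bound $\|\nabla r_\coarse^\ver\|_{\omega_\ver}$ by testing~\eqref{eq:rh beta} with $v_\coarse=r_\coarse^\ver$ (using~\eqref{eq:b in H01} for the adjustment by a constant if $\psi_\ver\in H_0^1(\Omega)$), noting $r_\coarse^\ver\psi_\ver\in H_0^1(\Omega)$, and substituting the weak formulation $\dual{f}{r_\coarse^\ver\psi_\ver}_\Omega=\dual{\nabla u}{\nabla(r_\coarse^\ver\psi_\ver)}_\Omega$ for~\eqref{eq:variational}, which yields
\[
\|\nabla r_\coarse^\ver\|_{\omega_\ver}^2 = \dual{\nabla(u-u_\coarse)}{\nabla(r_\coarse^\ver\psi_\ver)}_{\omega_\ver}.
\]
The product-rule bound $\|\nabla(r_\coarse^\ver\psi_\ver)\|\le (C_1+C_2)\|\nabla r_\coarse^\ver\|_{\omega_\ver}$ from~\eqref{eq:beta bounded}--\eqref{eq:CPFcont beta} combined with Poincar\'e--Friedrichs~\eqref{eq_PF} then gives $\|\nabla r_\coarse^\ver\|_{\omega_\ver}\le (C_1+C_2)\|\nabla(u-u_\coarse)\|_{\omega_\ver}$.

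For the small-patch terms I use the triangle inequality with $\ttau_\coarse:=\Pi_{\RT_\coarse^{\ver,\vertt}}\bigl(\psi_\vertt(\psi_\ver\nabla u_\coarse+\nabla r_\coarse^\ver)\bigr)$ to split off the flux oscillation $\|(1-\Pi_{\RT_\coarse^{\ver,\vertt}})\psi_\vertt(\psi_\ver\nabla u_\coarse+\nabla r_\coarse^\ver)\|_{\omega_\vertt}$ (whose $\nabla r_\coarse^\ver$ part is absorbed by the degree choice, leaving only the $\psi_\ver\psi_\vertt\nabla u_\coarse$ contribution in~\eqref{eq:oscillations}). On the remaining discrete minimization with $\ttau_\coarse\in\RT_\coarse^{\ver,\vertt}$, Assumption~\ref{ass:min RT to cont} yields the crucial $p$-robust factor $C_{\rm st}$ and replaces it by the continuous minimization over $\H_0(\div,\omega_\vertt)$. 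The compatibility $\dual{g}{1}_{\omega_\vertt}=0$ (already checked in Lemma~\ref{lem:easier projection}) allows me to apply the standard $H(\div)$-duality
\[
\min_{\substack{\bv\in\H_0(\div,\omega_\vertt)\\\Dv\bv=\Upsilon_{Q_\coarse^{\ver,\vertt}}g}} \|\bv+\ttau_\coarse\|_{\omega_\vertt} = \sup_{\substack{v\in H_*^1(\omega_\vertt)\\\|\nabla v\|_{\omega_\vertt}=1}}\bigl(\dual{\Upsilon_{Q_\coarse^{\ver,\vertt}}g}{v}_{\omega_\vertt}-\dual{\ttau_\coarse}{\nabla v}_{\omega_\vertt}\bigr).
\]
Using Lemma~\ref{lem:easier projection}, adding and subtracting $g$ and $\psi_\vertt(\psi_\ver\nabla u_\coarse+\nabla r_\coarse^\ver)$, the sup splits into: (i) the divergence oscillation, handled by subtracting the mean of $v$ (allowed since $Q_{\coarse,\mathrm{c}}^{\ver,\vertt}$ contains transformed constants in the interior case) and applying $\|v-\overline v\|\le\diam(\omega_\vertt)C_{\rm PF}(\omega_\vertt)\|\nabla v\|$; (ii) the flux oscillation; and (iii) the main term $\dual{f\psi_\ver\psi_\vertt}{v}-\dual{\nabla u_\coarse}{\nabla(\psi_\ver\psi_\vertt v)} - \dual{\nabla r_\coarse^\ver}{\nabla(\psi_\vertt v)}$. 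In the main term, $\psi_\ver\psi_\vertt v\in H_0^1(\Omega)$ (using~\eqref{eq:H1_b}), so the weak formulation converts the first two contributions into $\dual{\nabla(u-u_\coarse)}{\nabla(\psi_\ver\psi_\vertt v)}_{\omega_\vertt}$, bounded using~\eqref{eq:beta bounded}--\eqref{eq:CPFcont b} by $(C_3C_5+C_1(C_5+C_6))\|\nabla(u-u_\coarse)\|_{\omega_\vertt}$; the third contribution is bounded by Cauchy--Schwarz by $(C_5+C_6)\|\nabla r_\coarse^\ver\|_{\omega_\vertt}$. These are the two arguments of the max in $C_{\rm veff}$ (with the second incorporating the Jacobian factor $C_{\F}C_6$ arising when tracking the divergence oscillation constant through the Piola transform).

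To conclude, I sum over $\vertt$ with $|\omega_\vertt\cap T|>0$ using the overlap bound~\eqref{eq:number of bs}: a Cauchy--Schwarz with at most $C_4$ nonzero terms plus the finite overlap $\sum_\vertt\|\cdot\|_{\omega_\vertt}^2\le C_4\|\cdot\|_{\omega_\ver}^2$ produces the outer factor $C_4$ in front of $\|\nabla(u-u_\coarse)\|_{\omega_\ver}$ and the factor $\sqrt{C_4}$ in front of $\osc_\coarse^{\rm eff}(\omega_\ver,T)$; absorbing $\|\nabla r_\coarse^\ver\|_{\omega_\vertt}\le\|\nabla r_\coarse^\ver\|_{\omega_\ver}\le (C_1+C_2)\|\nabla(u-u_\coarse)\|_{\omega_\ver}$ and grouping via $\sqrt{a^2+b^2}$ produces the $\sqrt{1+(C_1+C_2)^2}$ prefactor; the extra factor $\sqrt{2}$ stems from splitting flux and divergence oscillations. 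Finally, the $\|\nabla r_\coarse^\ver\|_T$ contribution from the outer decomposition is absorbed into the same coefficient. The main obstacle I anticipate is the careful bookkeeping in step three: identifying exactly which terms in the sup-duality expression are absorbed by the projections $\Pi_{\RT_\coarse^{\ver,\vertt}}$ and $\Upsilon_{Q_{\coarse,\mathrm{c}}^{\ver,\vertt}}$ (so that the $\nabla r_\coarse^\ver$-terms do not contaminate the clean oscillations~\eqref{eq:oscillations}), and tracking the Piola-transform-induced constants so that $C_{\F}$ appears only where needed in the second entry of the max defining $C_{\rm veff}$.
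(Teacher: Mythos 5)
Your plan reproduces the paper's proof almost step by step: the same two-level partition-of-unity decomposition, the same Cauchy--Schwarz use of~\eqref{eq:number of bs}, the same reduction of each small-patch term via Assumption~\ref{ass:min RT to cont} to a continuous minimization treated by primal-dual duality, and the same treatment of the divergence oscillation via the Poincar\'e--Friedrichs inequality. Two differences are worth noting. First, you bound $\norm{\nabla r_\coarse^\ver}{\omega_\ver}$ directly by testing~\eqref{eq:rh beta} with $v_\coarse=r_\coarse^\ver$; the paper instead passes through the continuous lift $r^\ver$ of~\eqref{eq:r beta} and Galerkin best approximation, $\norm{\nabla r_\coarse^\ver}{\omega_\ver}\le\norm{\nabla r^\ver}{\omega_\ver}$, then estimates the latter by a sup-argument. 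Both give $(C_1+C_2)\norm{\nabla(u-u_\coarse)}{\omega_\ver}$; your direct route is a harmless (slight) simplification. (The parenthetical about ``adjustment by a constant if $\psi_\ver\in H_0^1(\Omega)$'' is superfluous here: $r_\coarse^\ver\in V_\coarse^\ver$ is already admissible as a test function.)

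There is, however, one genuine gap: you claim the $\nabla r_\coarse^\ver$ contribution to the flux oscillation, $\norm{(1-\Pi_{\RT_\coarse^{\ver,\vertt}})(\psi_\vertt\nabla r_\coarse^\ver)}{\omega_\vertt}$, is ``absorbed by the degree choice'', i.e.\ that $\psi_\vertt\nabla r_\coarse^\ver\in\RT_\coarse^{\ver,\vertt}$. This is not true in the generality of Proposition~\ref{prop:efficient} and in particular not for the isogeometric setting that the result targets: $\nabla r_\coarse^\ver=(D\F)^{-\top}(\nabla\widehat r_\coarse^\ver)\circ\F^{-1}$ involves the factor $(D\F)^{-\top}$, while $\RT^{\widetilde\p}(\TT_\vertt)$ is the contravariant Piola image of $\RT^{\widetilde\p}(\widehat\TT_\vertt)$; these structures match only when $\F$ is affine and $\widetilde p$ is sufficiently large (cf.\ Remark~\ref{rem:efficient oscillations}), which is precisely \emph{not} the situation in the paper's numerical experiments with a NURBS map and $\widetilde p\in\{p+1,p+2\}$. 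The paper instead keeps this as a bounded term, $\norm{(1-\Pi_{\RT_\coarse^{\ver,\vertt}})(\psi_\vertt\nabla r_\coarse^\ver)}{\omega_\vertt}\le C_5\norm{\nabla r_\coarse^\ver}{\omega_\vertt}$ by~\eqref{eq:b bounded}, and absorbs it together with the other $\norm{\nabla r_\coarse^\ver}{\omega_\vertt}$ contributions into the second branch of the $\max$ defining $C_{\rm veff}$ — that is where the extra ``$C_5$'' in $C_5+(C_5+C_6)+C_\F C_6$ comes from. As it stands, your argument would produce a constant with this $C_5$ missing, and the claimed absorption is unjustified; simply replacing ``vanishes by the degree choice'' with the bound above repairs the proof and recovers the stated constant.
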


\begin{proof}
We prove the assertion in seven steps.

\noindent{\bf Step~1:}
Definition~\eqref{eq_flux_tot}, the partition of unity property~\eqref{eq:partition of unity}, and the triangle inequality show that
\begin{align*}
\norm{\ssigma_\coarse+\nabla u_\coarse }{T}&\refff{eq_flux_tot}{eq:partition of unity}=\Bigg\|\sum_{\substack{\ver\in\VV_\coarse\\|\omega_\ver\cap T|>0}}\big(\ssigma_\coarse^\ver+\psi_\ver\nabla u_\coarse\big)\Bigg\|_T
\le \sum_{\substack{\ver\in\VV_\coarse\\|\omega_\ver\cap T|>0}} \norm{\ssigma_\coarse^\ver+\psi_\ver\nabla u_\coarse}{T}.
\end{align*}

\noindent{\bf Step~2:}
Next, we bound each summand separately.
Let $\ver\in\VV_\coarse$ with $|\omega_\ver\cap T|>0$.
Then, definition~\eqref{eq_flux_a} and the partition of unity property~\eqref{eq:partition of unity2} together with the Cauchy--Schwarz inequality give that
\begin{align}\begin{split}
\norm{\ssigma_\coarse^\ver+\psi_\ver\nabla u_\coarse}{T}& \refff{eq_flux_a}{eq:partition of unity2}= \Bigg\|\sum_{\substack{\vertt\in\VV_\coarse^\ver\\ |\omega_\vertt\cap T|>0}} \big(\ssigma_\coarse^{\ver,\vertt}+\psi_\ver \psi_\vertt\nabla u_\coarse\big)\Bigg\|_T\\
&\reff{eq:number of bs}\le \sqrt{C_4}
\Bigg(\sum_{\substack{\vertt\in\VV_\coarse^\ver\\ |\omega_\vertt\cap T|>0}}\norm{\ssigma_\coarse^{\ver,\vertt} + \psi_\ver \psi_\vertt\nabla u_\coarse}{T}^2\Bigg)^{1/2}\\
&\reff{eq:b bounded}\le \sqrt{2 C_4} \Bigg(\!\!\sum_{\substack{\vertt\in\VV_\coarse^\ver\\ |\omega_\vertt\cap T|>0}}\big(\norm{\ssigma_\coarse^{\ver,\vertt}+\psi_\vertt(\psi_\ver \nabla u_\coarse + \nabla r_\coarse^\ver)}{T}^2+ C_5^2\norm{\nabla r_\coarse^\ver}{\omega_\vertt}^2\big)\!\!\Bigg)^{1/2}.
\end{split}
\label{eq:after C1}
\end{align}

\noindent{\bf Step~3:}
We estimate the first summand of~\eqref{eq:after C1}.
To this end, let $\vertt\in\VV_\coarse^\ver$ with $|\omega_\vertt\cap T|>0$.
As in Lemma~\ref{lem:minimizer}, we abbreviate $g \eq f\psi_\ver \psi_\vertt-\nabla u_\coarse{\cdot}\nabla(\psi_\ver \psi_\vertt) -\nabla r_\coarse^\ver{\cdot}\nabla \psi_\vertt$ as well as $\ttau\eq\psi_\vertt(\psi_\ver \nabla u_\coarse + \nabla r_\coarse^\ver)$.
Then, definition~\eqref{eq:sigma beta b} (or its equivalent formulation~\eqref{eq:computation of sigma}, which allows to stick in the projector $\Pi_{\RT_\coarse^{\ver,\vertt}}$) with~$\V_\coarse^{\ver,\vertt}\subseteq\RT_\coarse^{\ver,\vertt}$ together with~\eqref{eq:min RT to cont} give that
\begin{align*}
&\norm{\ssigma_\coarse^{\ver,\vertt}+\psi_\vertt(\psi_\ver \nabla u_\coarse + \nabla r_\coarse^\ver)}{T}
\le
\norm{\ssigma_\coarse^{\ver,\vertt}+\ttau}{\omega_\vertt}
\\
&\qquad\,\reff{eq:sigma beta b}=\,
\min_{\substack{\bv_\coarse\in\V_\coarse^{\ver,\vertt}\\
 \Dv\bv_\coarse=\Upsilon_{Q_\coarse^{\ver,\vertt}}(g)}}
\norm{\bv_\coarse + \Pi_{\RT_\coarse^{\ver,\vertt}} (\ttau)}{\omega_\vertt}
\reff{eq:min RT to cont}\le
C_{\rm st} \min_{\substack{\bv \in \H_0(\div,\omega_\vertt)\\
\Dv\bv=\Upsilon_{Q_\coarse^{\ver,\vertt}}(g)}}
\norm{\bv+ \Pi_{\RT_\coarse^{\ver,\vertt}} (\ttau) }{\omega_\vertt}
\\
&\qquad\reff{eq:b bounded}\le
C_{\rm st} \Bigg(\min_{\substack{\bv \in \H_0(\div,\omega_\vertt)\\
\Dv\bv=\Upsilon_{Q_\coarse^{\ver,\vertt}}(g)}}
\norm{\bv+ \ttau }{\omega_\vertt}
+ \norm{(1-\Pi_{\RT_\coarse^{\ver,\vertt}}) (\psi_\ver\psi_\vertt\nabla u_\coarse) }{\omega_\vertt}
+ C_5\norm{\nabla r_\coarse^\ver}{\omega_\vertt}\Bigg).
\end{align*}
Let $r^{\ver,\vertt} \in H_*^1(\omega_\vertt)$ solve
\[
\dual{\nabla r^{\ver,\vertt}}{\nabla v}_{\omega_\vertt}
=\dual{\Upsilon_{Q_\coarse^{\ver,\vertt}}(g)}{v}_{\omega_\vertt}- \dual{\ttau}{\nabla v}_{\omega_\vertt}
\quad\text{for all }v\in H_*^1(\omega_\vertt).
\]
Then a standard primal-dual equivalence gives, as in, e.g., \cite[Corollary~3.6]{ev20},
\begin{align*}
\min_{\substack{\bv \in \H_0(\div,\omega_\vertt)\\
\Dv\bv=\Upsilon_{Q_\coarse^{\ver,\vertt}}(g)}}
\norm{\bv+ \ttau }{\omega_\vertt}
= \norm{\nabla r^{\ver,\vertt}}{\omega_\vertt} =  \sup_{\substack{v\in H_*^1(\omega_\vertt)\\ \norm{\nabla v}{\omega_\vertt}=1}}
\big(\dual{\Upsilon_{Q_\coarse^{\ver,\vertt}} (g)}{v}_{\omega_\vertt} -\dual{\ttau}{\nabla v}_{\omega_\vertt}\big).
\end{align*}
Thus,
\begin{align}
& \min_{\substack{\bv \in \H_0(\div,\omega_\vertt)\\
\Dv\bv=\Upsilon_{Q_\coarse^{\ver,\vertt}}(g)}}
\norm{\bv+ \ttau }{\omega_\vertt}
=  \sup_{\substack{v\in H_*^1(\omega_\vertt)\\ \norm{\nabla v}{\omega_\vertt}=1}}
\big( -\dual{\ttau}{\nabla v}_{\omega_\vertt}
+\dual{g}{v}_{\omega_\vertt}
-\dual{(1-\Upsilon_{Q_\coarse^{\ver,\vertt}})g}{v}_{\omega_\vertt}\big)
\notag
\\
&\,\,\,\,=\,\,\,\, \sup_{\substack{v\in H_*^1(\omega_\vertt)\\ \norm{\nabla v}{\omega_\vertt}=1}}
\big(\dual{f\psi_\ver-\nabla u_\coarse {\cdot}\nabla \psi_\ver}{v\psi_\vertt}_{\omega_\vertt} - \dual{\psi_\ver\nabla u_\coarse + \nabla r_\coarse^\ver}{\nabla(v\psi_\vertt)}_{\omega_\vertt}
\label{eq:without osc}
\\
&\qquad\qquad-\dual{(1-\Upsilon_{Q_\coarse^{\ver,\vertt}})(f\psi_\ver\psi_\vertt - \nabla u_\coarse{\cdot}\nabla (\psi_\ver \psi_\vertt) -  \nabla r_\coarse^\ver{\cdot}\nabla \psi_\vertt)}{v}_{\omega_\vertt}\big).
\label{eq:with osc}
\end{align}

\noindent{\bf Step~4:} Recalling~\eqref{eq:H1_b}, we note that for all $v\in H_*^1(\omega_\vertt)$,
\begin{align*}
\dual{f\psi_\ver}{v\psi_\vertt}_{\omega_\vertt} & = \dual{f}{\underbrace{v\psi_\ver\psi_\vertt}_{\in H_0^1(\Omega)}}_{\omega_\vertt} \reff{eq:variational}=\dual{\nabla u}{\nabla (v\psi_\ver \psi_\vertt)}_{\omega_\vertt}\\
& = \dual{\nabla u {\cdot}\nabla \psi_\ver}{v\psi_\vertt}_{\omega_\vertt} + \dual{\psi_\ver \nabla u}{\nabla (v\psi_\vertt)}_{\omega_\vertt}.
\end{align*}
With the Poincar\'e--Friedrichs inequality~\eqref{eq_PF} and Assumption~\ref{ass:CPF}, the latter equality allows us to further estimate the term in~\eqref{eq:without osc}
\begin{eqnarray*}
&&\hspace{-12mm}\sup_{\substack{v\in H_*^1(\omega_\vertt)\\ \norm{\nabla v}{\omega_\vertt}=1}}
\big(\dual{f\psi_\ver-\nabla u_\coarse{\cdot}\nabla \psi_\ver}{v\psi_\vertt}_{\omega_\vertt} - \dual{\psi_\ver\nabla u_\coarse + \nabla r_\coarse^\ver}{\nabla(v\psi_\vertt)}_{\omega_\vertt}\big)
\\
&=&\sup_{\substack{v\in H_*^1(\omega_\vertt)\\ \norm{\nabla v}{\omega_\vertt}=1}}
\big(\dual{\nabla (u-u_\coarse){\cdot}\nabla \psi_\ver}{v\psi_\vertt}_{\omega_\vertt} + \dual{\psi_\ver\nabla (u-u_\coarse) - \nabla r_\coarse^\ver}{\nabla (v\psi_\vertt)}_{\omega_\vertt}\big)
\\
&\le& \norm{\nabla (u-u_\coarse)}{\omega_\vertt} \norm{\nabla \psi_\ver}{\infty,\omega_\vertt} \sup_{\substack{v\in H_*^1(\omega_\vertt)\\ \norm{\nabla v}{\omega_\vertt}=1}} \norm{v\psi_\vertt}{\omega_\vertt}
\\
&&\quad+ \big(\norm{\psi_\ver}{\infty,\omega_\vertt} \norm{\nabla (u-u_\coarse)}{\omega_\vertt}+ \norm{\nabla r_\coarse^\ver}{\omega_\vertt}\big) \sup_{\substack{v\in H_*^1(\omega_\vertt)\\ \norm{\nabla v}{\omega_\vertt}=1}} \norm{\nabla (v\psi_\vertt)}{\omega_\vertt}
\\
&\le& \norm{\nabla (u-u_\coarse)}{\omega_\vertt} \big(\norm{\nabla\psi_\ver}{\infty,\omega_\vertt} \norm{\psi_\vertt}{\infty,\omega_\vertt}\diam(\omega_\vertt) C_{\rm PF}(\omega_\vertt) \big)
\\
&&\quad+ \big(\norm{\psi_\ver}{\infty,\omega_\vertt} \norm{\nabla (u-u_\coarse)}{\omega_\vertt}+ \norm{\nabla r_\coarse^\ver}{\omega_\vertt}\big)\big(\norm{\nabla \psi_\vertt}{\infty,\omega_\vertt}\diam(\omega_\vertt) C_{\rm PF}(\omega_\vertt)+\norm{\psi_\vertt}{\infty,\omega_\vertt}\big)
\\
&\stackrel{\ref{ass:CPF}}\le&
(C_3 C_5 + C_1 (C_5+C_6))\norm{\nabla(u-u_\coarse)}{\omega_\vertt}
+ (C_5+C_6) \norm{\nabla r_\coarse^\ver}{\omega_\vertt}.
\end{eqnarray*}

\noindent{\bf Step~5:}
To estimate the term in~\eqref{eq:with osc},
we use that $\Upsilon_{Q_\coarse^{\ver,\vertt}}g = \Upsilon_{Q_{\coarse,\mathrm{c}}^{\ver,\vertt}}g$ from Lemma~\ref{lem:easier projection}, \eqref{eq:piola2}, \eqref{eq:projection_Q_a_b_c}, the Poincar\'e--Friedrichs inequality~\eqref{eq_PF}, and~\eqref{eq:CPFcont b} to infer that
\begin{eqnarray*}
&& \hspace{-15mm}\sup_{\substack{v\in H_*^1(\omega_\vertt)\\ \norm{\nabla v}{\omega_\vertt}=1}}
\dual{(1-\Upsilon_{Q_\coarse^{\ver,\vertt}})(f\psi_\ver\psi_\vertt - \nabla u_\coarse{\cdot}\nabla (\psi_\ver \psi_\vertt) -  \nabla r_\coarse^\ver{\cdot}\nabla \psi_\vertt)}{v}_{\omega_\vertt}
\\
&\refff{eq:piola2}{eq:projection_Q_a_b_c}\le &
\big(\norm{(1-\Upsilon_{Q_{\coarse,\mathrm{c}}^{\ver,\vertt}})(f\psi_\ver\psi_\vertt-\nabla u_\coarse{\cdot}\nabla(\psi_\ver \psi_\vertt))}{\omega_\vertt}
\\
&&+\norm{\det(D\F)}{\infty,\widehat\omega_\vertt}\norm{\det(D\F)^{-1}}{\infty,\widehat\omega_\vertt} \norm{\nabla r_\coarse^\ver}{\omega_\vertt} \norm{\nabla \psi_\vertt}{\infty,\omega_\vertt} \big)
\diam(\omega_\vertt)  C_{\rm PF}(\omega_\vertt)
\\
&\reff{eq:CPFcont b}\le&\diam(\omega_\vertt)  C_{\rm PF}(\omega_\vertt) \norm{(1-\Upsilon_{Q_{\coarse,\mathrm{c}}^{\ver,\vertt}})(f\psi_\ver\psi_\vertt-\nabla u_\coarse{\cdot}\nabla(\psi_\ver \psi_\vertt))}{\omega_\vertt}
+ C_{\F}C_6\norm{\nabla r_\coarse^\ver}{\omega_\vertt}.
\end{eqnarray*}
Together with Steps~3--4 and the Cauchy--Schwarz inequality, we conclude that
\begin{align}\label{eq_patch_eff} \begin{split}
&\norm{\ssigma_\coarse^{\ver,\vertt}+\psi_\vertt(\psi_\ver \nabla u_\coarse + \nabla r_\coarse^\ver)}{T}^2
\\
&\quad \le C_{\rm veff}^2\Big(\norm{\nabla (u-u_\coarse)}{\omega_\vertt}^2 +  \norm{\nabla r_\coarse^\ver}{\omega_\vertt}^2
+ \norm{(1-\Pi_{\RT_\coarse^{\ver,\vertt}}) (\psi_\ver\psi_\vertt\nabla u_\coarse) }{\omega_\vertt}^2
\\
&\qquad + \diam(\omega_\vertt)^{2}  C_{\rm PF}(\omega_\vertt)^{2} \norm{(1-\Upsilon_{Q_{\coarse,\mathrm{c}}^{\ver,\vertt}})(f\psi_\ver\psi_\vertt-\nabla u_\coarse{\cdot}\nabla(\psi_\ver \psi_\vertt))}{\omega_\vertt}^2\Big).
\end{split} \end{align}

\noindent{\bf Step~6:}
As a final auxiliary step, we bound $\norm{\nabla r_\coarse^\ver}{\omega_\ver}$.
Since $r_\coarse^\ver$ from~\eqref{eq:rh beta} is the Galerkin approximation of $r^\ver$ from~\eqref{eq:r beta}, and crucially employing~\eqref{eq:H1_a}, we see with the variational formulation~\eqref{eq:variational} that
\begin{eqnarray*}
\norm{\nabla r_\coarse^\ver}{\omega_\ver}
&\le& \norm{\nabla r^\ver}{\omega_\ver}
\refff{eq:r beta}{eq:H1_a}=\sup_{\substack{v\in H_*^1(\omega_\ver)\\ \norm{\nabla v}{\omega_\ver}=1 }}\big(\dual{f}{\underbrace{v\psi_\ver}_{\in H_0^1(\Omega)}}_{\omega_\ver}-\dual{\nabla u_\coarse}{\nabla(v\psi_\ver)}_{\omega_\ver}\big)
\\
&\reff{eq:variational}=&\sup_{\substack{v\in H_*^1(\omega_\ver)\\ \norm{\nabla v}{\omega_\ver}=1 }} \dual{\nabla(u- u_\coarse)}{\nabla(v\psi_\ver)}_{\omega_\ver}
\le \norm{\nabla(u- u_\coarse)}{\omega_\ver} \sup_{\substack{v\in H_*^1(\omega_\ver)\\ \norm{\nabla v}{\omega_\ver}=1 }} \norm{\nabla(v \psi_\ver)}{\omega_\ver}.
\end{eqnarray*}
Relying on the Poincar\'e inequality~\eqref{eq_PF} and with~\eqref{eq:CPFcont beta} and~\eqref{eq:beta bounded}, we can  bound the supremum
\begin{align} \label{CPa}
 \sup_{\substack{v\in H_*^1(\omega_\ver)\\ \norm{\nabla v}{\omega_\ver}=1 }} \norm{\nabla(v\psi_\ver)}{\omega_\ver}
 &\le  \sup_{\substack{v\in H_*^1(\omega_\ver)\\ \norm{\nabla v}{\omega_\ver}=1 }} \big(\norm{\nabla\psi_\ver}{\infty,\omega_\ver}\norm{v}{\omega_\ver} + \norm{\psi_\ver}{\infty,\omega_\ver}\norm{\nabla v}{\omega_\ver}\big)\refff{eq:beta bounded}{eq:CPFcont beta}\le C_1+C_2.
\end{align}

\noindent{\bf Step~7:}
Putting all steps together, also using that $C_5 \leq C_{\rm veff}$, we obtain that
\begin{align*}
& \norm{\ssigma_\coarse+\nabla u_\coarse}{T} \\
&\stackrel{1,2}\le \sum_{\substack{\ver\in\VV_\coarse\\|\omega_\ver\cap T|>0}} \!\!\!
\sqrt{2 C_4} \Bigg(\!\!\sum_{\substack{\vertt\in\VV_\coarse^\ver\\ |\omega_\vertt\cap T|>0}}\big(\norm{\ssigma_\coarse^{\ver,\vertt} + \psi_\vertt (\psi_\ver\nabla u_\coarse + \nabla r_\coarse^\ver)}{T}^2+C_5^2\norm{\nabla r_\coarse^\ver}{\omega_\vertt}^2\big)\!\Bigg)^{1/2} \\
&\stackrel{\eqref{eq_patch_eff}}\le
\sqrt{2 C_4} \sum_{\substack{\ver\in\VV_\coarse\\|\omega_\ver\cap T|>0}} \!\!\! \Bigg(\!\! \sum_{\substack{\vertt\in\VV_\coarse^\ver\\ |\omega_\vertt\cap T|>0}}   \Big(C_{\rm veff}^2\big(\norm{\nabla (u-u_\coarse)}{\omega_\vertt}^2
+\norm{\nabla r_\coarse^\ver}{\omega_\vertt}^2 \\
& \quad + \norm{(1-\Pi_{\RT_\coarse^{\ver,\vertt}}) (\psi_\ver\psi_\vertt\nabla u_\coarse) }{\omega_\vertt}^2
\\
&\quad + \diam(\omega_\vertt)^2  C_{\rm PF}(\omega_\vertt)^2 \norm{(1-\Upsilon_{Q_{\coarse,\mathrm{c}}^{\ver,\vertt}})(f\psi_\ver\psi_\vertt-\nabla u_\coarse{\cdot}\nabla(\psi_\ver \psi_\vertt))}{\omega_\vertt}^2\big) + C_5^2\norm{\nabla r_\coarse^\ver}{\omega_\vertt}^2\Big)\!\Bigg)^{1/2}
\\
&\refff{eq:number of bs}{eq:oscillations} \le
2\, C_4 C_{\rm veff}\sum_{\substack{\ver\in\VV_\coarse\\|\omega_\ver\cap T|>0}} \!\!\!
\big(\norm{\nabla (u-u_\coarse)}{\omega_\ver}^2
+\norm{\nabla r_\coarse^\ver}{\omega_\ver}^2\big)^{1/2}
+ \sqrt{2 C_4} C_{\rm veff} \sum_{\substack{\ver\in\VV_\coarse\\|\omega_\ver\cap T|>0}} \!\!\! \osc_\coarse^{\rm eff}(\omega_\ver,T)
\\
&\stackrel{6}\le
2\sqrt{1+(C_1+C_2)^2}\, C_4 C_{\rm veff}\sum_{\substack{\ver\in\VV_\coarse\\|\omega_\ver\cap T|>0}} \!\!\!
\norm{\nabla (u-u_\coarse)}{\omega_\ver}
+\sqrt{2 C_4} C_{\rm veff} \sum_{\substack{\ver\in\VV_\coarse\\|\omega_\ver\cap T|>0}} \!\!\! \osc_\coarse^{\rm eff}(\omega_\ver,T),
\end{align*}
which concludes the proof.
\end{proof}

\begin{remark}\label{rem:efficient oscillations}
In the situation of Section~\ref{sec:partitions}, $\osc_\coarse^{\rm eff}(\omega_\ver,T)$ of~\eqref{eq:oscillations} vanishes if $\F$ is affine, $\TT_\coarse$ is a uniform refinement of $\TT_0$, $f$ is a $\TT_{\coarse}$-piecewise polynomial of some degree $\bm{q}=(q,\dots,q)$ with $q\ge0$, and $\widetilde p \ge \max\{q+\overline p +1, p+\overline p+1 \}$.  Note that $\widetilde p \ge p+\overline p+1$ in particular implies that $\psi_\ver \psi_\vertt\nabla u_\coarse+\psi_\vertt\nabla r_\coarse^\ver \in \RT^{\widetilde\p}(\TT_\vertt)$ that we take for $\RT_\coarse^{\ver,\vertt}$.
\end{remark}

We now turn towards the global efficiency. In order to achieve robustness with respect to the strength of the hierarchical refinement (the number of hanging nodes), we do not straightforwardly use the element-related result of Proposition~\ref{prop:efficient} but rather resort to its patch-related variant.
With an assumption on the maximal overlap by the patches $\omega_\ver$ (not limiting the strength of the hierarchical refinement, see Remark~\ref{rem:refinement}), our global efficiency result is:

\begin{proposition}\label{prop:efficiency}
Let $\TT_\coarse$ be a mesh of $\Omega$, as is the case in Section~\ref{sec:discrete spaces}.
Let $C_{\rm over}>0$ be a constant such that
\begin{subequations}
\begin{align}\label{eq:overlap}
\sup_{\bx \in \Omega} \#\set{\ver\in\VV_\coarse}{\bx \in \omega_\ver}\le C_{\rm over}.
\end{align}
Then, there holds that
\begin{align}\label{eq:efficiency}\begin{split}
\norm{\ssigma_\coarse + \nabla u_\coarse}{\Omega}^2
\le  4 C_{\rm over}^2 C_4^2 C_{\rm veff}^2 ((C_1+C_2)^2+1) \norm{\nabla (u-u_\coarse)}{\Omega}^2
 \\
+ 4 C_{\rm over} C_4 C_{\rm veff}^2 \sum_{\ver\in\VV_\coarse}\osc_\coarse^{\rm eff}(\omega_\ver)^2,
\end{split}\end{align}
where
\begin{align}\label{eq:oscillations_glob}
\begin{split}
\osc_\coarse^{\rm eff}(\omega_\ver)^2\eq \sum_{\vertt\in\VV_\coarse^\ver}
\big(\diam(\omega_\vertt)^2  C_{\rm PF}(\omega_\vertt)^2 \norm{(1-\Upsilon_{Q_{\coarse,\mathrm{c}}^{\ver,\vertt}})(f\psi_\ver\psi_\vertt-\nabla u_\coarse{\cdot}\nabla(\psi_\ver \psi_\vertt))}{\omega_\vertt} ^2
\\
+\norm{(1-\Pi_{\RT_\coarse^{\ver,\vertt}}) (\psi_\ver\psi_\vertt\nabla u_\coarse) }{\omega_\vertt}^2  \big).
\end{split}
\end{align}
\end{subequations}
In the setting of Sections~\ref{sec:setting}--\ref{sec:partitions}, all involved constants except of $C_{\rm over}$ depend themselves only on the space dimension $d$, the polynomial degree $\overline p$ from Section~\ref{sec:hierarchical partitions} (which itself depends only on the considered smoothness, see Remark~\ref{rem_p_rob}), and $\max\{\norm{D\F}{\infty,\widehat\Omega},\norm{(D\F)^{-1}}{\infty,\widehat\Omega}\}$.
They do \emph{not} depend on the polynomial degrees $p$ and $\widetilde p$. $C_{\rm over}$ is discussed in Remark~\ref{rem:refinement} below.
\end{proposition}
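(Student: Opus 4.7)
The plan is to aggregate the patchwise bounds already assembled in the proof of Proposition~\ref{prop:efficient}, but to work at the level of large patches $\omega_\ver$ rather than single elements $T$. This variant is the natural one for a global estimate, because it makes the hierarchical refinement strength enter only through the large-patch overlap constant $C_{\rm over}$ (and through $C_4$ for small patches), rather than through an additional element-wise overlap that could blow up on strongly graded meshes.

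The first step follows Step~1 of the proof of Proposition~\ref{prop:efficient}: using~\eqref{eq_flux_tot} and the partition of unity~\eqref{eq:partition of unity}, I write
\begin{align*}
\ssigma_\coarse + \nabla u_\coarse = \sum_{\ver\in\VV_\coarse}\bigl(\ssigma_\coarse^\ver + \psi_\ver \nabla u_\coarse\bigr),
\end{align*}
with each summand supported in $\omega_\ver$. A pointwise Cauchy--Schwarz inequality combined with~\eqref{eq:overlap} (at most $C_{\rm over}$ nonzero summands at any point) gives
\begin{align*}
\norm{\ssigma_\coarse + \nabla u_\coarse}{\Omega}^2 \le C_{\rm over} \sum_{\ver\in\VV_\coarse}\norm{\ssigma_\coarse^\ver + \psi_\ver \nabla u_\coarse}{\omega_\ver}^2.
\end{align*}

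Next, I bound each patch contribution. Repeating Step~2 of the proof of Proposition~\ref{prop:efficient} on the whole patch $\omega_\ver$ rather than on a single element $T$, i.e., splitting $\ssigma_\coarse^\ver + \psi_\ver \nabla u_\coarse = \sum_\vertt (\ssigma_\coarse^{\ver,\vertt} + \psi_\ver \psi_\vertt \nabla u_\coarse)$ via~\eqref{eq_flux_a} and~\eqref{eq:partition of unity2}, inserting $\pm\psi_\vertt \nabla r_\coarse^\ver$, and using~\eqref{eq:number of bs}--\eqref{eq:b bounded}, I obtain
\begin{align*}
\norm{\ssigma_\coarse^\ver + \psi_\ver \nabla u_\coarse}{\omega_\ver}^2
\le 2 C_4 \sum_{\vertt\in\VV_\coarse^\ver}\Bigl(\norm{\ssigma_\coarse^{\ver,\vertt}+\psi_\vertt(\psi_\ver\nabla u_\coarse + \nabla r_\coarse^\ver)}{\omega_\vertt}^2 + C_5^2\norm{\nabla r_\coarse^\ver}{\omega_\vertt}^2\Bigr).
\end{align*}
The inner norms are then controlled by the patch estimate~\eqref{eq_patch_eff}, which bounds each $\norm{\ssigma_\coarse^{\ver,\vertt}+\psi_\vertt(\psi_\ver \nabla u_\coarse + \nabla r_\coarse^\ver)}{\omega_\vertt}^2$ by $C_{\rm veff}^2$ times $\norm{\nabla(u-u_\coarse)}{\omega_\vertt}^2 + \norm{\nabla r_\coarse^\ver}{\omega_\vertt}^2$ plus the two oscillation quantities that make up $\osc_\coarse^{\rm eff}(\omega_\ver)^2$ (summed over $\vertt$).

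To collapse to the patch level, I sum over $\vertt \in \VV_\coarse^\ver$ and apply~\eqref{eq:number of bs} in reverse to get $\sum_\vertt \norm{\nabla(u-u_\coarse)}{\omega_\vertt}^2 \le C_4 \norm{\nabla(u-u_\coarse)}{\omega_\ver}^2$ and analogously for $\nabla r_\coarse^\ver$. Then Step~6 of the proof of Proposition~\ref{prop:efficient}, see~\eqref{CPa}, gives $\norm{\nabla r_\coarse^\ver}{\omega_\ver} \le (C_1+C_2)\norm{\nabla(u-u_\coarse)}{\omega_\ver}$. Absorbing the stray $C_5^2$ into $C_{\rm veff}^2$ via $C_5 \le C_{\rm veff}$, the patch bound
\begin{align*}
\norm{\ssigma_\coarse^\ver + \psi_\ver \nabla u_\coarse}{\omega_\ver}^2
\le 4 C_4^2 C_{\rm veff}^2 \bigl((C_1+C_2)^2 + 1\bigr)\norm{\nabla(u-u_\coarse)}{\omega_\ver}^2 + 4 C_4 C_{\rm veff}^2 \, \osc_\coarse^{\rm eff}(\omega_\ver)^2
\end{align*}
emerges. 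Substituting this back into the global inequality of the first step, and applying~\eqref{eq:overlap} once more to get $\sum_\ver \norm{\nabla(u-u_\coarse)}{\omega_\ver}^2 \le C_{\rm over}\norm{\nabla(u-u_\coarse)}{\Omega}^2$, immediately yields~\eqref{eq:efficiency}; the claimed $p$-, $\widetilde p$-independence of the constants is inherited verbatim from Proposition~\ref{prop:efficient}. There is no real obstacle here: the only care needed is bookkeeping the constants so that $C_{\rm over}$ enters quadratically in the error term (one factor from the large-patch aggregation of $\ssigma_\coarse + \nabla u_\coarse$, one from the sum of $\norm{\nabla(u-u_\coarse)}{\omega_\ver}^2$) but only linearly in the oscillation term, which is aggregated only once.
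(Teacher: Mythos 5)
Your proof is correct and follows the same route as the paper's: the first step ($C_{\rm over}$ from the large-patch overlap), the small-patch splitting via~\eqref{eq_flux_a} and~\eqref{eq:number of bs}, the insertion of $\pm\psi_\vertt\nabla r_\coarse^\ver$, the application of~\eqref{eq_patch_eff}, and the control of $\norm{\nabla r_\coarse^\ver}{\omega_\ver}$ via Step~6 of Proposition~\ref{prop:efficient} all match. The only cosmetic difference is that you first establish a clean patch-level bound on $\norm{\ssigma_\coarse^\ver + \psi_\ver\nabla u_\coarse}{\omega_\ver}^2$ before summing over $\ver$, whereas the paper carries the double sum over $\ver$ and $\vertt$ to the end and collapses both overlap estimates at once; the constants come out identically.
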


\begin{proof}
Proceeding as in the proof of Proposition~\ref{prop:efficient} while relying on Definition~\ref{def_flux_a} and the partitions of unity~\eqref{eq:partition of unity} and~\eqref{eq:partition of unity2} together with the finite overlap assumptions~\eqref{eq:number of bs} and~\eqref{eq:overlap}, we see
\begin{align*}
 \norm{\ssigma_\coarse + \nabla u_\coarse}{\Omega}^2
 & \refff{eq:partition of unity}{eq_flux_tot}= \Bigg\|\sum_{\ver\in\VV_\coarse}\big(\ssigma_\coarse^\ver+\psi_\ver\nabla u_\coarse\big)\Bigg\|_\Omega^2 \reff{eq:overlap}\leq C_{\rm over} \sum_{\ver\in\VV_\coarse} \norm{\ssigma_\coarse^\ver+\psi_\ver\nabla u_\coarse}{\omega_\ver}^2 \\
 & \refff{eq:partition of unity2}{eq_flux_a}= C_{\rm over} \sum_{\ver\in\VV_\coarse} \Bigg\|\sum_{\vertt\in\VV_\coarse^\ver} \big(\ssigma_\coarse^{\ver,\vertt}+\psi_\ver \psi_\vertt\nabla u_\coarse\big)\Bigg\|_{\omega_\ver}^2 \\
 & \quad \reff{eq:number of bs}\leq C_{\rm over} C_4 \sum_{\ver\in\VV_\coarse} \sum_{\vertt\in\VV_\coarse^\ver} \norm{\ssigma_\coarse^{\ver,\vertt}+\psi_\ver \psi_\vertt\nabla u_\coarse}{\omega_\vertt}^2.
\end{align*}
We now employ~\eqref{eq_patch_eff} plus the triangle and the Cauchy--Schwarz inequalities. Also using~\eqref{eq:b bounded} and the fact that $C_5 \leq C_{\rm veff}$, this leads to
\[
\norm{\ssigma_\coarse + \nabla u_\coarse}{\Omega}^2 \leq C_{\rm over} C_4 4 C_{\rm veff}^2 \sum_{\ver\in\VV_\coarse} \Bigg(\sum_{\vertt\in\VV_\coarse^\ver}
\big(\norm{\nabla (u-u_\coarse)}{\omega_\vertt}^2
+\norm{\nabla r_\coarse^\ver}{\omega_\vertt}^2\big)
+ \osc_\coarse^{\rm eff}(\omega_\ver)^2\Bigg),
\]
and we are left to treat the first two terms. The finite overlap assumptions~\eqref{eq:number of bs} and~\eqref{eq:overlap} again imply
\[
\sum_{\ver\in\VV_\coarse} \sum_{\vertt\in\VV_\coarse^\ver}
\norm{\nabla (u-u_\coarse)}{\omega_\vertt}^2 \reff{eq:number of bs}\leq C_4 \sum_{\ver\in\VV_\coarse} \norm{\nabla (u-u_\coarse)}{\omega_\ver}^2 \reff{eq:overlap}\leq C_{\rm over} C_4 \norm{\nabla (u-u_\coarse)}{\Omega}^2.
\]
Reasoning similarly and also employing the two estimates from Step~6 of the proof of Proposition~\ref{prop:efficient}, we see
\begin{align*}
\sum_{\ver\in\VV_\coarse} \sum_{\vertt\in\VV_\coarse^\ver}
\norm{\nabla r_\coarse^\ver}{\omega_\vertt}^2 & \reff{eq:number of bs}\leq C_4 \sum_{\ver\in\VV_\coarse} \norm{\nabla r_\coarse^\ver}{\omega_\ver}^2 \leq C_4 (C_1+C_2)^2 \sum_{\ver\in\VV_\coarse} \norm{\nabla(u- u_\coarse)}{\omega_\ver}^2 \\
& \reff{eq:overlap}\leq C_{\rm over} C_4 (C_1+C_2)^2 \norm{\nabla (u-u_\coarse)}{\Omega}^2,
\end{align*}
which altogether gives~\eqref{eq:efficiency}.
\end{proof}

\begin{remark}\label{rem:refinement}
For (scaled) hierarchical B-splines $\psi_\ver$ of degree $\overline p$ on $\overline{\mathcal{H}}$-admissible (i.e., $\mathcal{H}$-admissible with respect to $\overline{p}$, $\overline\KK_{0}$, and $\overline{m}$) meshes $\widehat\TT_\coarse$ of class $\mu$ introduced along with suitable refinement algorithms in~\cite{ghp17} for $\mu=2$ and in~\cite{bgv18} for $\mu\ge2$, the upper bound $C_{\rm over}$ from~\eqref{eq:overlap} only depends on the polynomial degree $\overline p$ (which itself only depends on the considered smoothness), the initial knot vector $\overline\KK_0$, the multiplicity $\overline{m}$, and the grading parameter $\mu$, allowing for meshes with arbitrarily many hanging nodes. Meshes satisfying assumption~\eqref{eq:overlap} with the constant $C_{\rm over}$ only depending on the polynomial degree $\overline p$ are considered in the numerics Section~\ref{sec:numerics} below.
\end{remark}

\begin{remark} \label{rem:new}
We further mention that the degree $\overline p$ and the smoothness $\overline p - \overline m$ for (scaled) hierarchical B-splines $\psi_\ver$, or more precisely the approximation power of the spanned space, do not directly affect the oscillations~\eqref{eq:oscillations_glob} and thus the efficiency of the estimator.
Similarly, the chosen $\overline p$ and $\overline m$ do not directly affect the oscillations~\eqref{eq_osc} and thus the reliability of the estimator. 
Indeed, as discussed in Remarks~\ref{rem:align} and~\ref{rem:efficient oscillations}, the presence and size of data oscillations is rather induced by the approximation properties of the spaces $Q_\coarse^{\ver,\vertt}$ and $\RT^{\widetilde\p}(\TT_\vertt)$ which are discontinuous (piecewise with respect to $\TT_\vertt$).
\end{remark}

\section{Numerical experiments}\label{sec:numerics}

We consider problem~\eqref{eq:Poisson} on the quarter ring depicted in Figure~\ref{fig:qring},
\begin{align*}
 \Omega\eq\set{r(\cos(\varphi),\sin(\varphi))}{r\in(1/2,1)\wedge\varphi\in(0,\pi/2)}
\end{align*}
with NURBS parametrization $\F$ as in \cite[Section~6.3]{ghp17},
and prescribe the exact solution
\begin{align} \label{eq_sol}
 u(x,y)=xy\sin(4\pi(x^2+y^2)).
\end{align}
For polynomial degrees $p\in\{1,\dots,5\}$ and multiplicities $m\in\{1,p\}$, we define the initial knot vectors by
\begin{align*}
 \KK_{1(0)}\eq\KK_{2(0)} \eq \big(\underbrace{0,\dots,0}_{(p+1) \text{-times}}, \underbrace{1/2,\dots,1/2}_{m \text{-times}}, \underbrace{1,\dots,1}_{(p+1) \text{-times}}\big),
 \end{align*}
leading to piecewise $p$-degree polynomials with $C^{p-m}$ smoothness.
As the corresponding polynomial degree $\overline p$ for the partition of unity by the $\psi_\ver$ in Section~\ref{sec:hierarchical partitions}, we choose $\overline  p\eq p+1-m$
and the corresponding multiplicity $\overline m \eq \overline p - p +m=1$ following~\eqref{eq_ov_p_2}, so that the $\psi_\ver$ are mapped piecewise $\overline  p$-degree polynomials of class $C^{\overline p- 1}$.
The polynomial degree $\widetilde p$ for the flux equilibration in Section~\ref{sec:discrete spaces} is chosen in $\{p+1,p+2\}$.
Following Remark~\ref{rem:efficient oscillations}, ignoring temporarily the source term $f$, this would only imply $\osc_\coarse^{\rm eff}(\omega_\ver,T) = \osc_\coarse^{\rm eff}(\omega_\ver) = 0$ in Propositions~\ref{prop:efficient} and~\ref{prop:efficiency} if $\F$ was affine (which is not the case here) and $\widetilde p \ge p+\overline p+1$ (which is only the case here for $m=p$, \ie, $C^0$, but not higher-smoothness splines; note that in the case of maximum $C^{p-1}$ smoothness, we should use $\widetilde p \ge p+\overline p+1 = 2p +1$ theoretically, but we merely employ $\widetilde p = p +1$ or $\widetilde p = p +2$ numerically). Nevertheless, both choices $\widetilde p \in \{p+1,p+2\}$ seem to perform numerically well in the considered test case also for high-smoothness cases, up to $m =1$, corresponding to $C^{p-1}$ splines.

\begin{figure}[t]
\begin{center}
\includegraphics[width=0.2\textwidth]{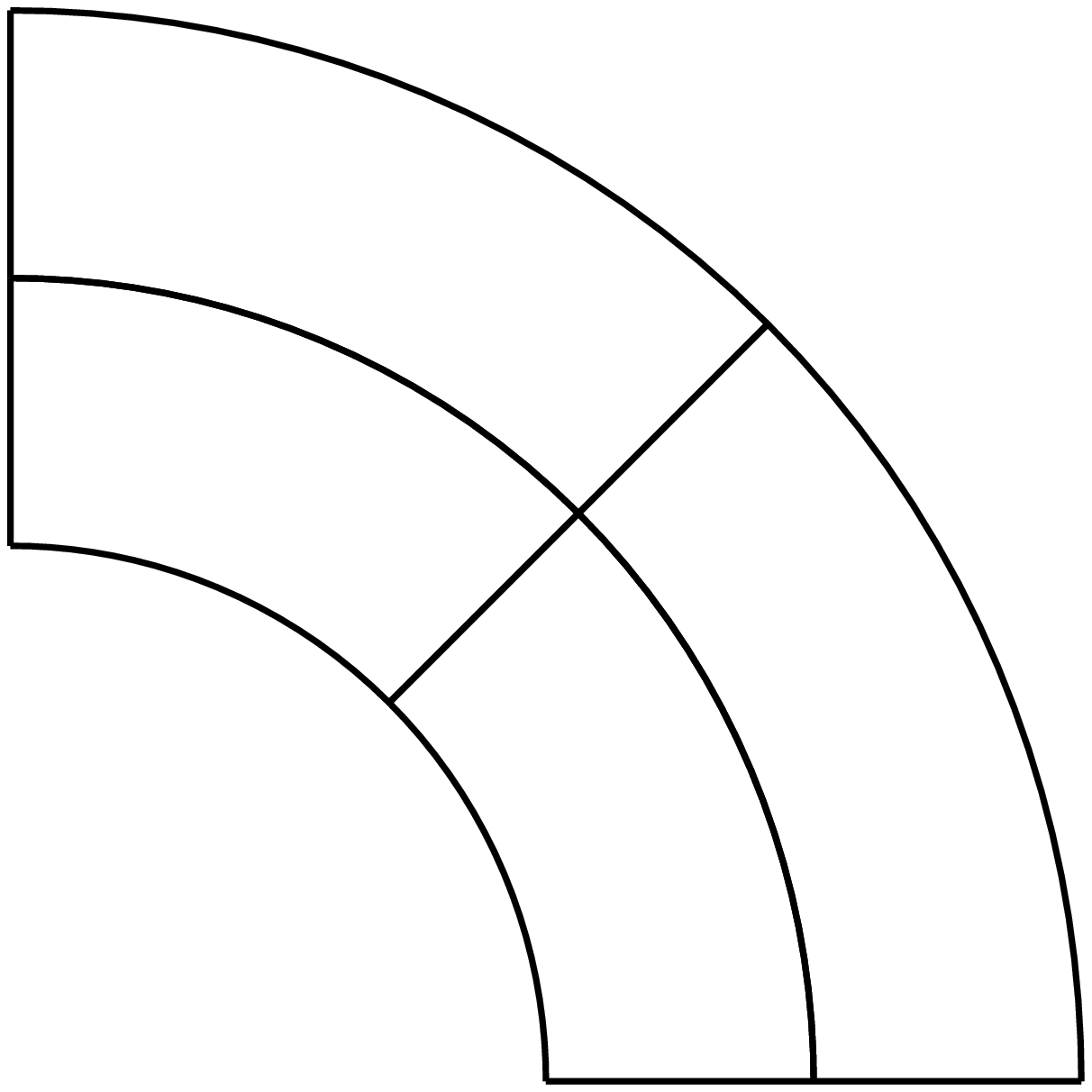}
\includegraphics[width=0.2\textwidth]{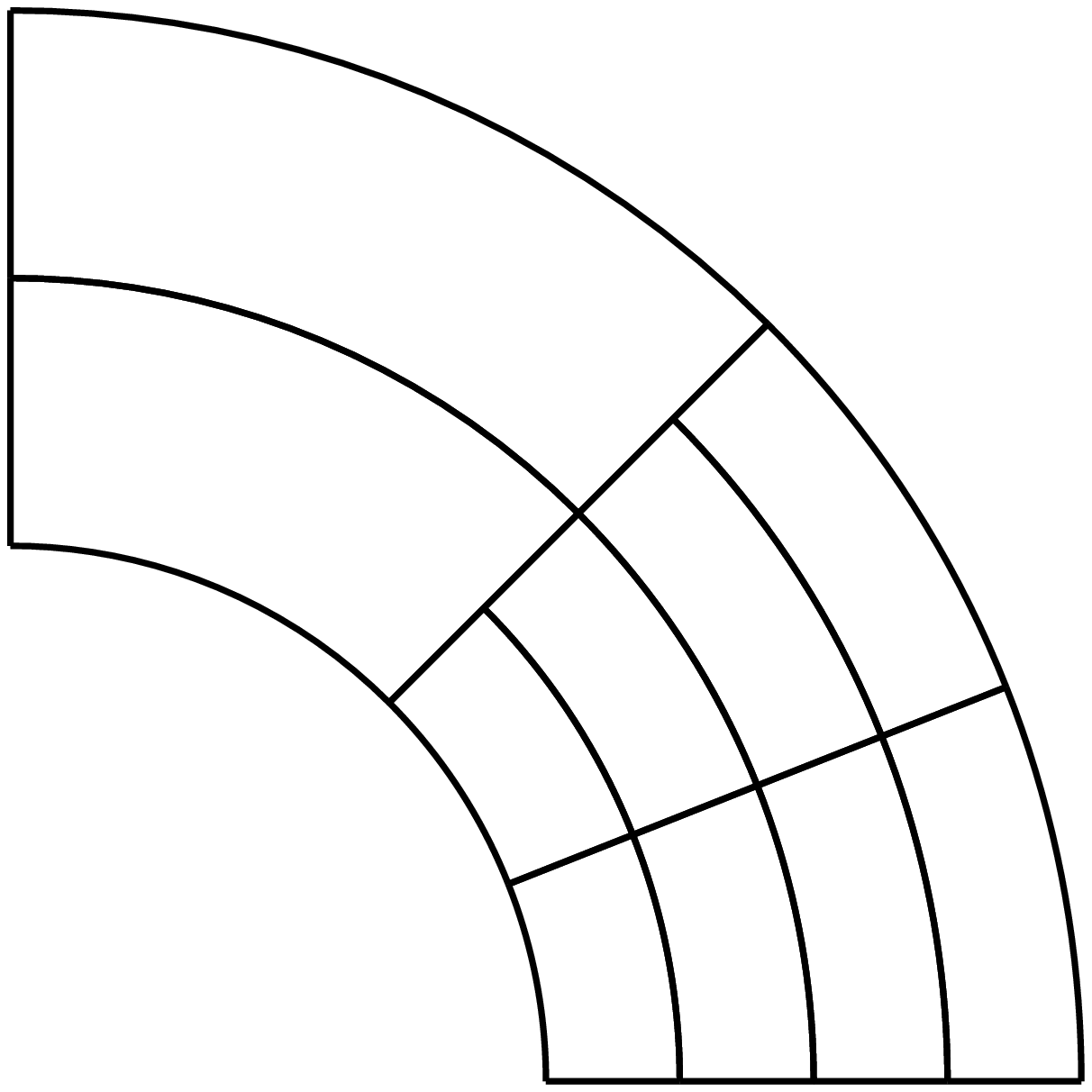}
\includegraphics[width=0.2\textwidth]{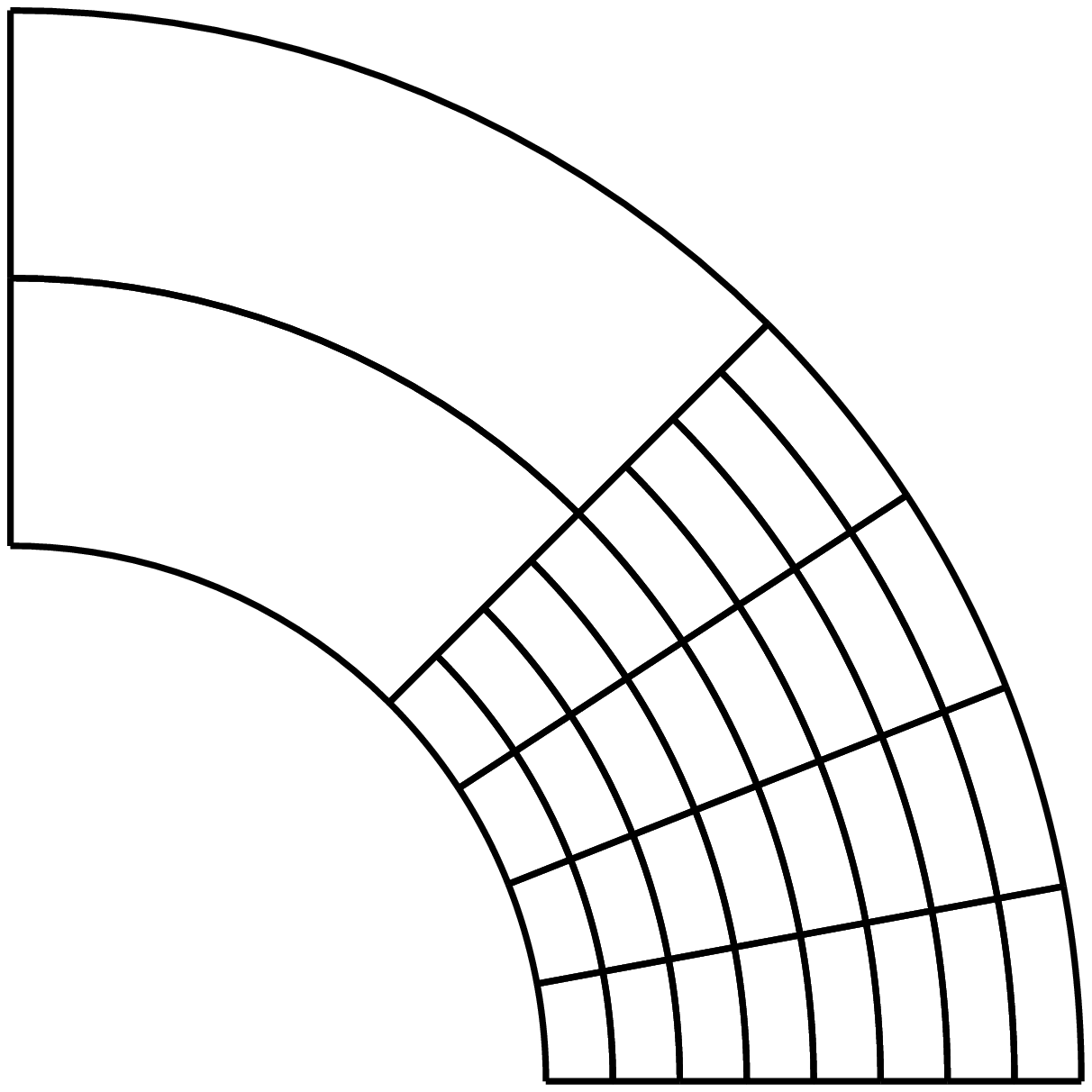}
\includegraphics[width=0.2\textwidth]{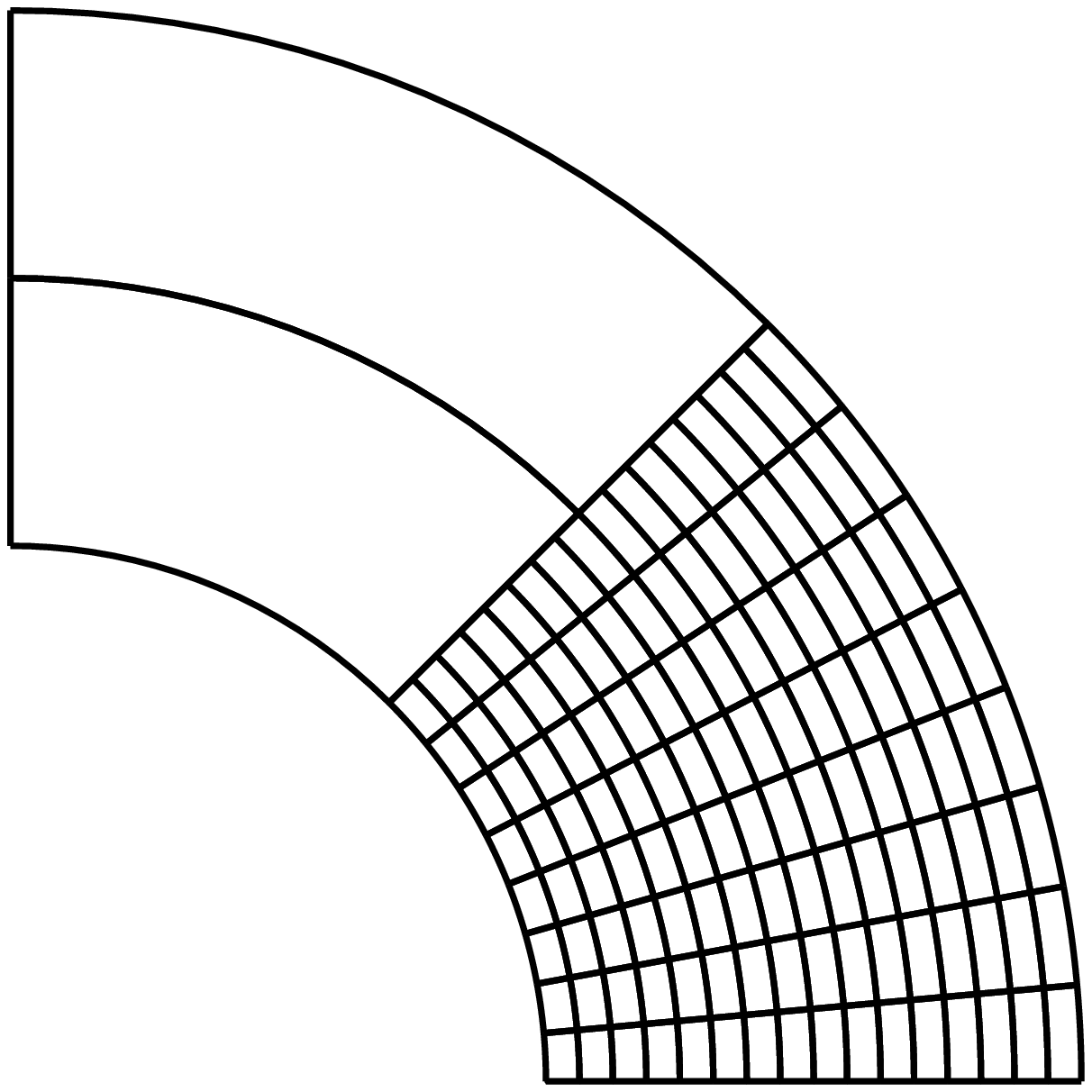}
\\
\vspace{5mm}
\includegraphics[width=0.2\textwidth]{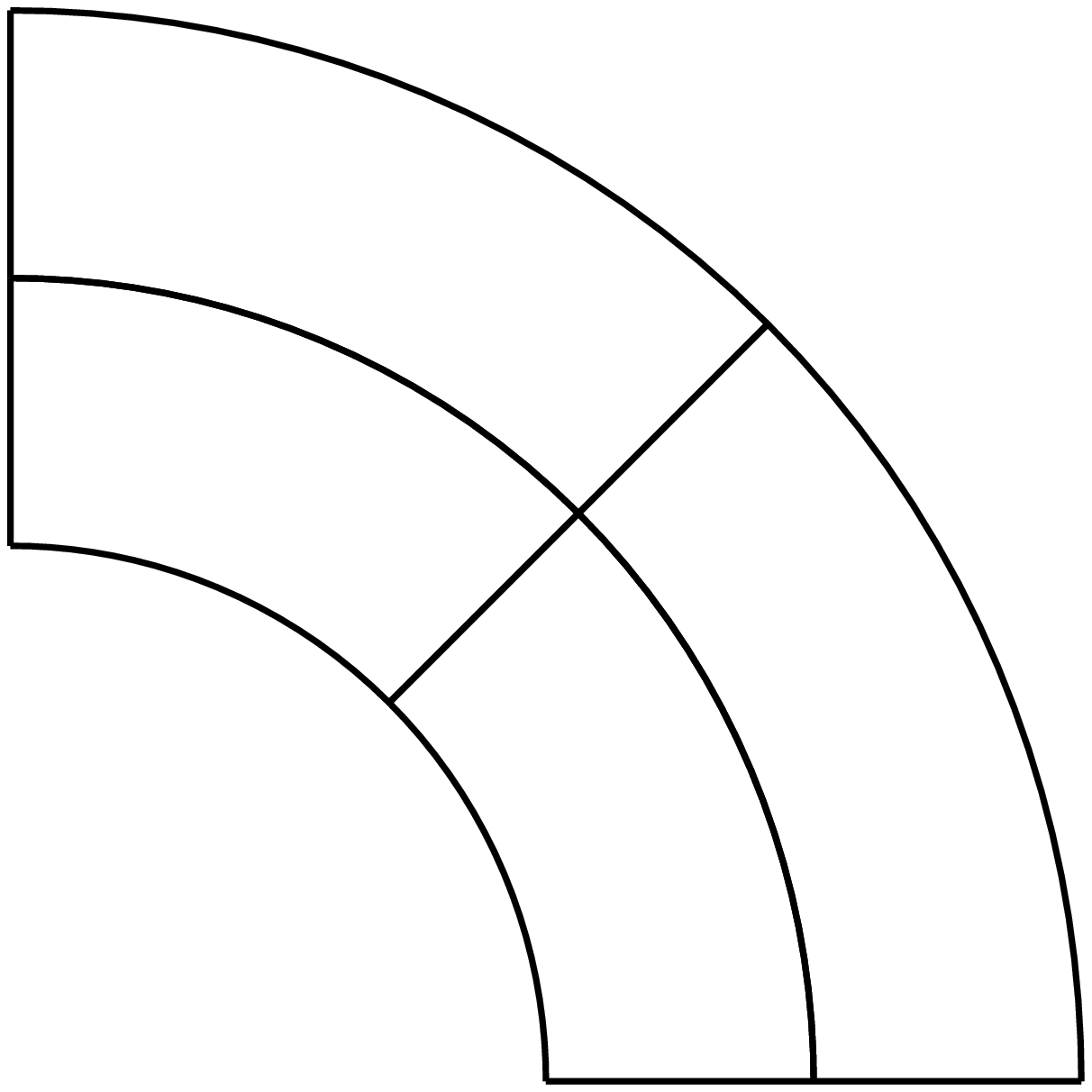}
\includegraphics[width=0.2\textwidth]{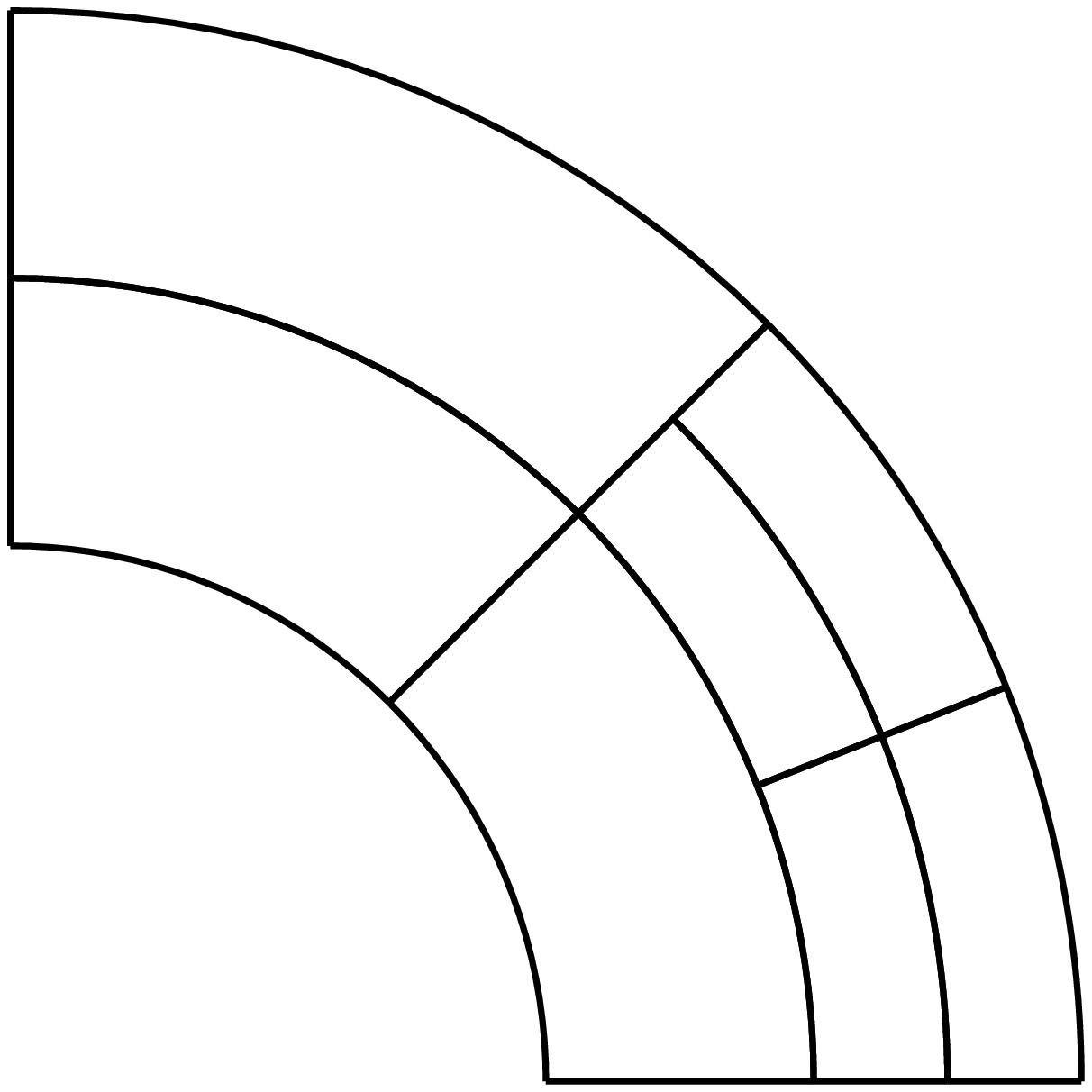}
\includegraphics[width=0.2\textwidth]{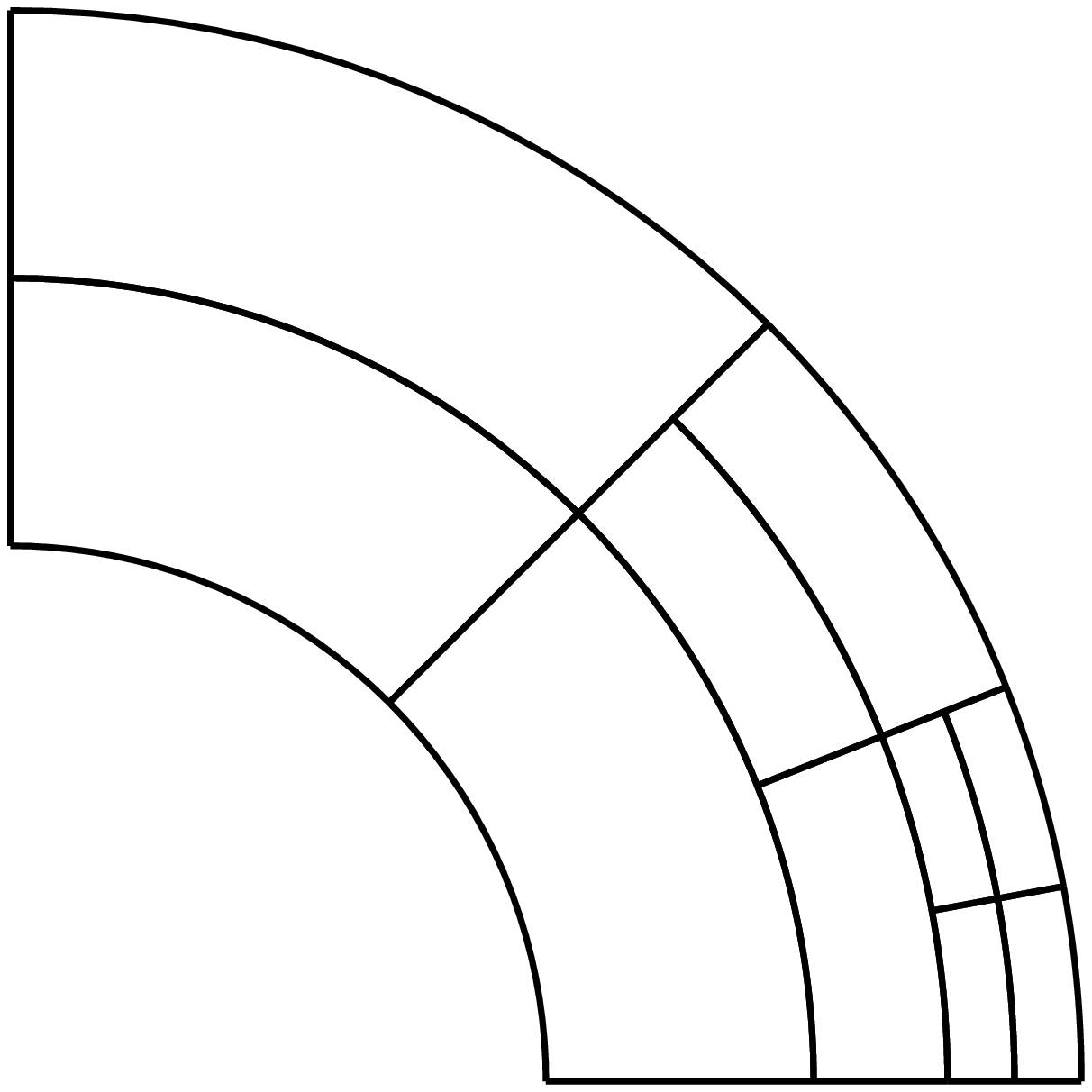}
\includegraphics[width=0.2\textwidth]{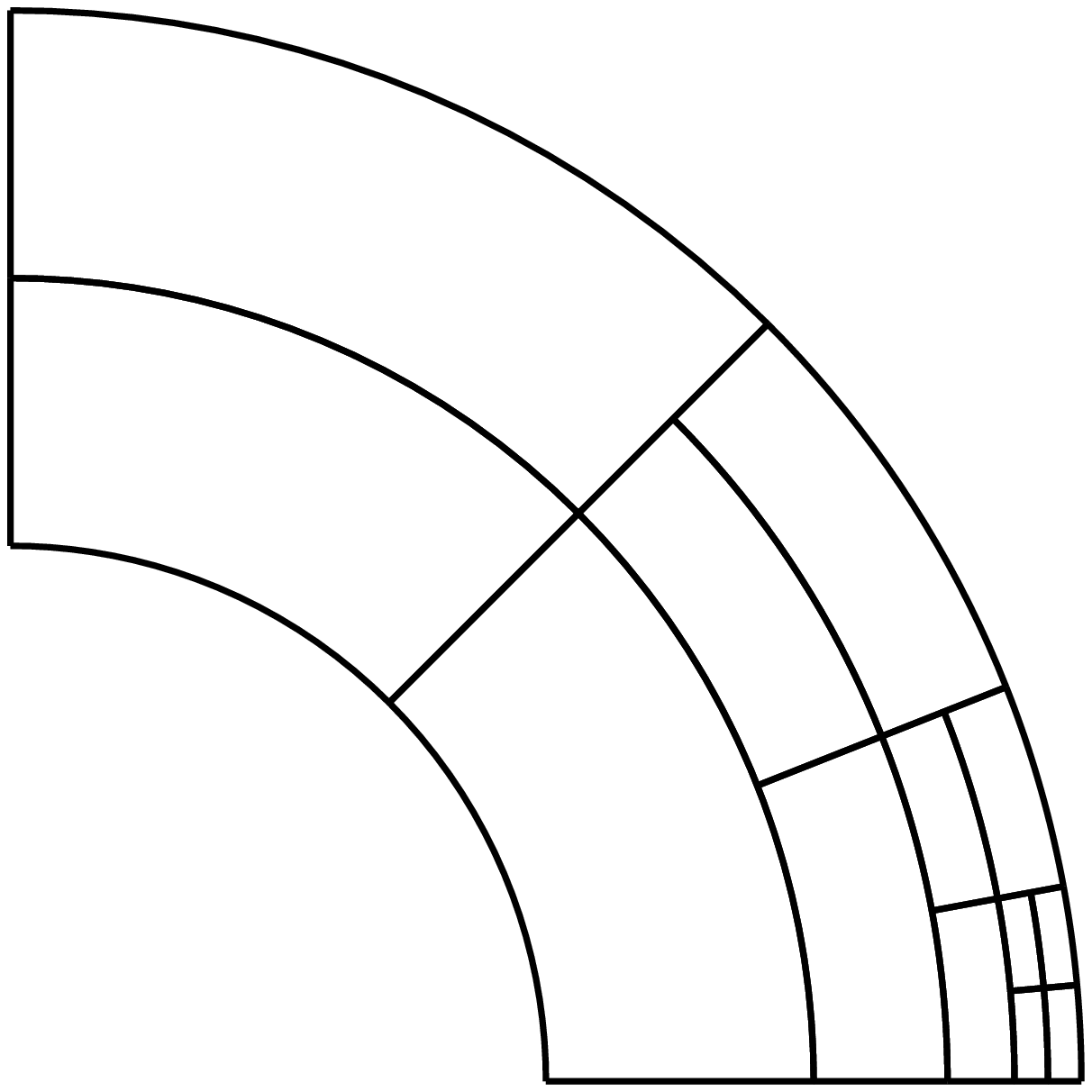}
\end{center}
\caption{\label{fig:qring}
The domain $\Omega$ considered in Section~\ref{sec:numerics} along with the initial mesh $\TT_0$ and the first three refinements: in the right half of $\Omega$ (leading to arbitrary many hanging nodes) (top) and towards the point $(1,0)$ (violating the bounded patch overlap condition~\eqref{eq:overlap}) (bottom).}
\end{figure}

We consider four different refinements of the initial mesh:
1) uniform refinement, where in each step, all elements in the parameter domain are bisected in both directions; 2) adaptive refinement, where in each step, a minimal set of elements $\mathcal{M}_\coarse\subseteq\TT_\coarse$ is marked via the D\"orfler marking
\begin{align*}
 \theta \sum_{T\in\TT_\coarse} \big(\norm{\ssigma_\coarse+\nabla u_\coarse}{T}^2 +  \osc_\coarse^{\rm rel}(T)^2\big)
 \le \sum_{T\in\mathcal{M}_\coarse} \big(\norm{\ssigma_\coarse+\nabla u_\coarse}{T}^2 +  \osc_\coarse^{\rm rel}(T)^2\big)
\end{align*}
with $\theta=0.5$ and subsequently refined via the refinement strategy from~\cite{ghp17} (see also Remark~\ref{rem:refinement});
3) artificial refinement enforcing an arbitrary number of hanging nodes, where in each step, all elements in the parameter domain that are contained in $[0,1/2]\times[0,1]$ are bisected in both directions (see Figure~\ref{fig:qring}, top);
4) artificial refinement enforcing an arbitrary number of overlapping patches $\omega_\ver$, where in each step, the element in the parameter domain containing the point $(0,0)$, which is mapped onto $(1,0)$ under $\F$, is bisected in both directions (see Figure~\ref{fig:qring}, bottom).
In each case, new knots have multiplicity $m$.

The resulting effectivity indices $\frac{\norm{\ssigma_\coarse+\nabla u_\coarse}{\Omega}}{\norm{\nabla(u-u_\coarse)}{\Omega}}$ and $\frac{\norm{\ssigma_\coarse+\nabla u_\coarse}{\Omega} + \osc_\coarse^{\rm rel}}{\norm{\nabla(u-u_\coarse)}{\Omega}}$ as function of the number of mesh elements $N$ in $\TT_\coarse$ are displayed in Figures~\ref{fig:uniform}--\ref{fig:local}.
Recall from Propositions~\ref{prop:efficient} and~\ref{prop:efficiency} that the efficiency constant in~\eqref{eq:efficiency} may theoretically depend on the space dimension $d$, the polynomial degree $\overline p$ (which itself depends on the considered smoothness, see Remark~\ref{rem_p_rob}), $\max\{\norm{D\F}{\infty,\widehat\Omega},$ $\norm{(D\F)^{-1}}{\infty,\widehat\Omega}\}$, and the overlap constant $C_{\rm over}$ from~\eqref{eq:overlap} (which itself only depends on $\overline p$ for the first three refinements types, see Remark~\ref{rem:refinement}, but grows unboundedly in the fourth case).
At least in this example, though, the dependence on $\overline p$ is \emph{not} observed, and the equilibrated flux estimator $\norm{\ssigma_\coarse+\nabla u_\coarse}{\Omega}$ (plus oscillation terms) seems to be not only, as proven, robust with respect to the polynomial degrees $p$ and $\widetilde p$, but also with respect to the smoothness incarnated in $\overline p$.
The increase of the effectivity indices on adaptively refined meshes with $\widetilde p = p + 1$ in the right part of Figure~\ref{fig:adaptive} is only because of data oscillation, as we discuss below.
Moreover, as theoretically shown in Proposition~\ref{prop:efficiency}, we also numerically observe in Figure~\ref{fig:hanging} the robustness with respect to the strength of the hierarchical refinement (number of hanging nodes).
For the fourth refinement, \eqref{eq:overlap} is not uniformly satisfied, which, however, is again not reflected in the resulting efficiency constants.
Note however that the computation of the equilibrated flux becomes very expensive in this case, as there are nodes $\ver$ whose patches $\omega_\ver$ coincide with the entire computational domain $\Omega$, so that the corresponding meshes $\TT_\ver$ are uniform refinements of $\TT_0$ up to the same level as the element containing the point $(1,0)$.
Additional numerical experiments were carried out for an exact solution $u(x,y)=\sin(2\pi x)\sin(2\pi y)$ on the square (not displayed), with similar results.

The ``data oscillation'' terms $\osc_\coarse^{\rm rel}$ from~\eqref{eq_osc} are displayed separately in Figure~\ref{fig:oscs}, again as function of the number of mesh elements $N$ in $\TT_\coarse$. For the present smooth solution~\eqref{eq_sol}, one expects the error $\norm{\nabla(u-u_\coarse)}{\Omega}$ to decay as $\mathcal{O}(h^{p}) \approx \mathcal{O}(N^{-p/2})$ for uniform mesh refinement. Recall from Remark~\ref{rem:reliable oscillations} that $\osc_\coarse^{\rm rel}$ are expected to decay in this case as $\mathcal{O}(h^{\widetilde p+2})$, i.e., as $\mathcal{O}(N^{-p/2-3/2})$ or $\mathcal{O}(N^{-p/2-2})$ for respectively $\widetilde p = p+1$ and $\widetilde p = p+2$. Figure~\ref{fig:oscs} only concerns adaptive mesh refinement, where $\norm{\nabla(u-u_\coarse)}{\Omega}$ is still expected to decay as $\mathcal{O}(N^{-p/2})$. We do not have here theoretical indications for $\osc_\coarse^{\rm rel}$, but we observe at least $\mathcal{O}(N^{-p/2})$ in most cases, even though the chosen $\widetilde p$ is theoretically inappropriate for higher smoothness as discussed above.
For uniform mesh refinement (not displayed), we indeed observe $\mathcal{O}(h^{\widetilde p+2})$, following Remark~\ref{rem:reliable oscillations}.

\begin{figure}[t]
\begin{center}
\includegraphics[width=0.49\textwidth]{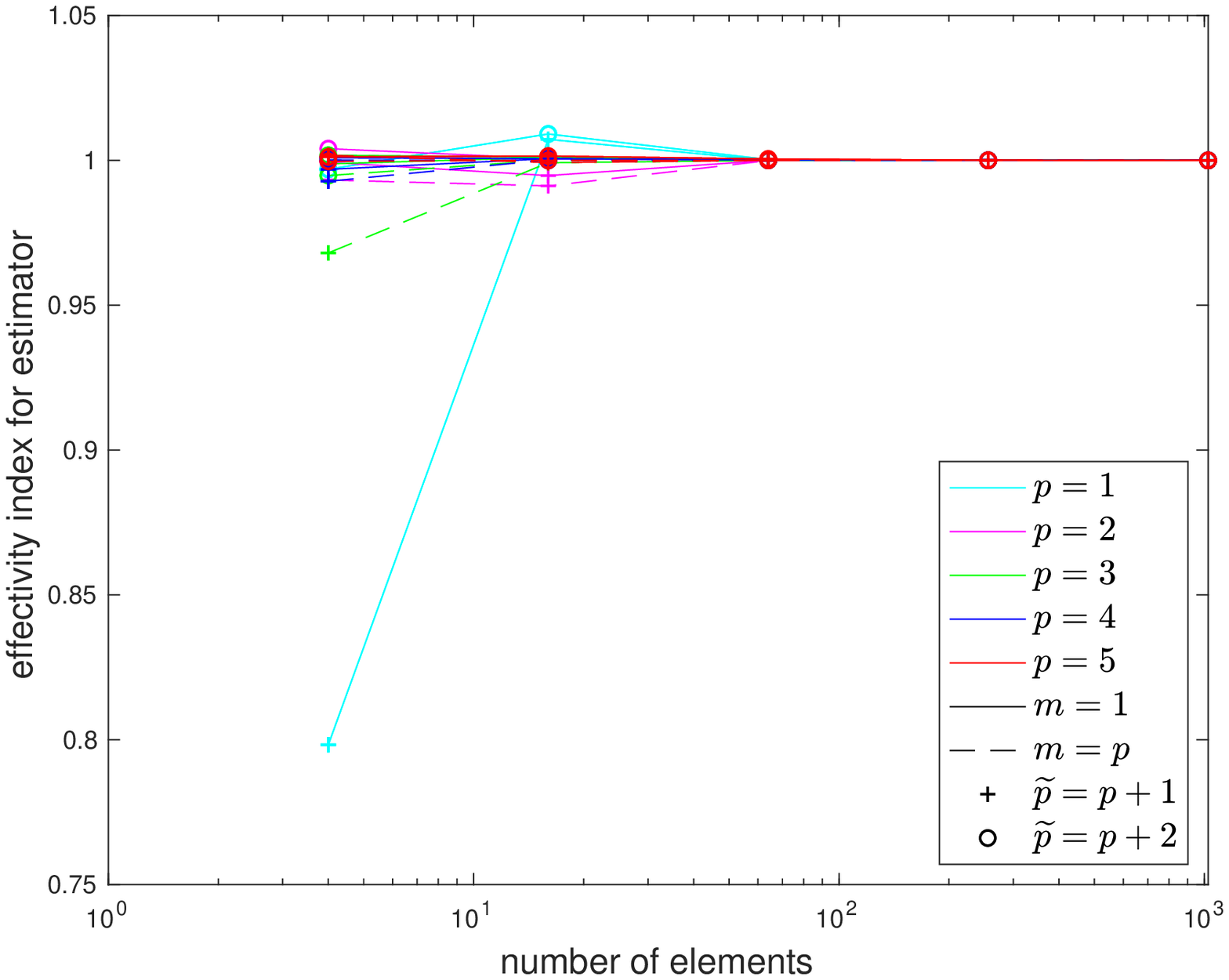}
\includegraphics[width=0.49\textwidth]{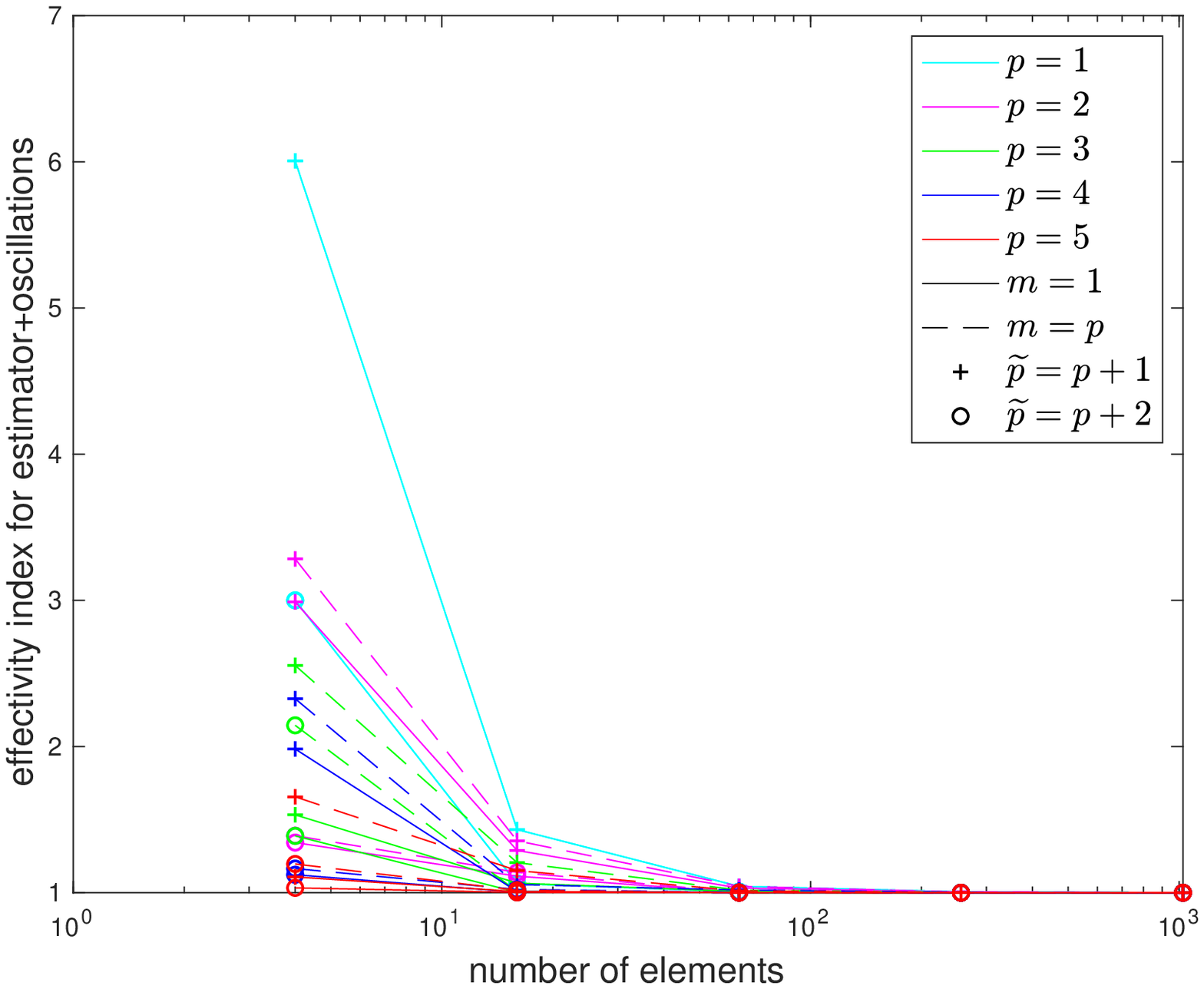}
\end{center}
\caption{\label{fig:uniform}
Effectivity indices $\frac{\norm{\ssigma_\coarse+\nabla u_\coarse}{\Omega}}{\norm{\nabla(u-u_\coarse)}{\Omega}}$ (left) and $\frac{\norm{\ssigma_\coarse+\nabla u_\coarse}{\Omega} + \osc_\coarse^{\rm rel}}{\norm{\nabla(u-u_\coarse)}{\Omega}}$  (right) corresponding to the problem of Section~\ref{sec:numerics} with uniform mesh refinement, polynomial degrees $p\in\{1,\dots,5\}$, multiplicities $m\in\{1,p\}$.}
\end{figure}

\begin{figure}[t]
\begin{center}
\includegraphics[width=0.49\textwidth]{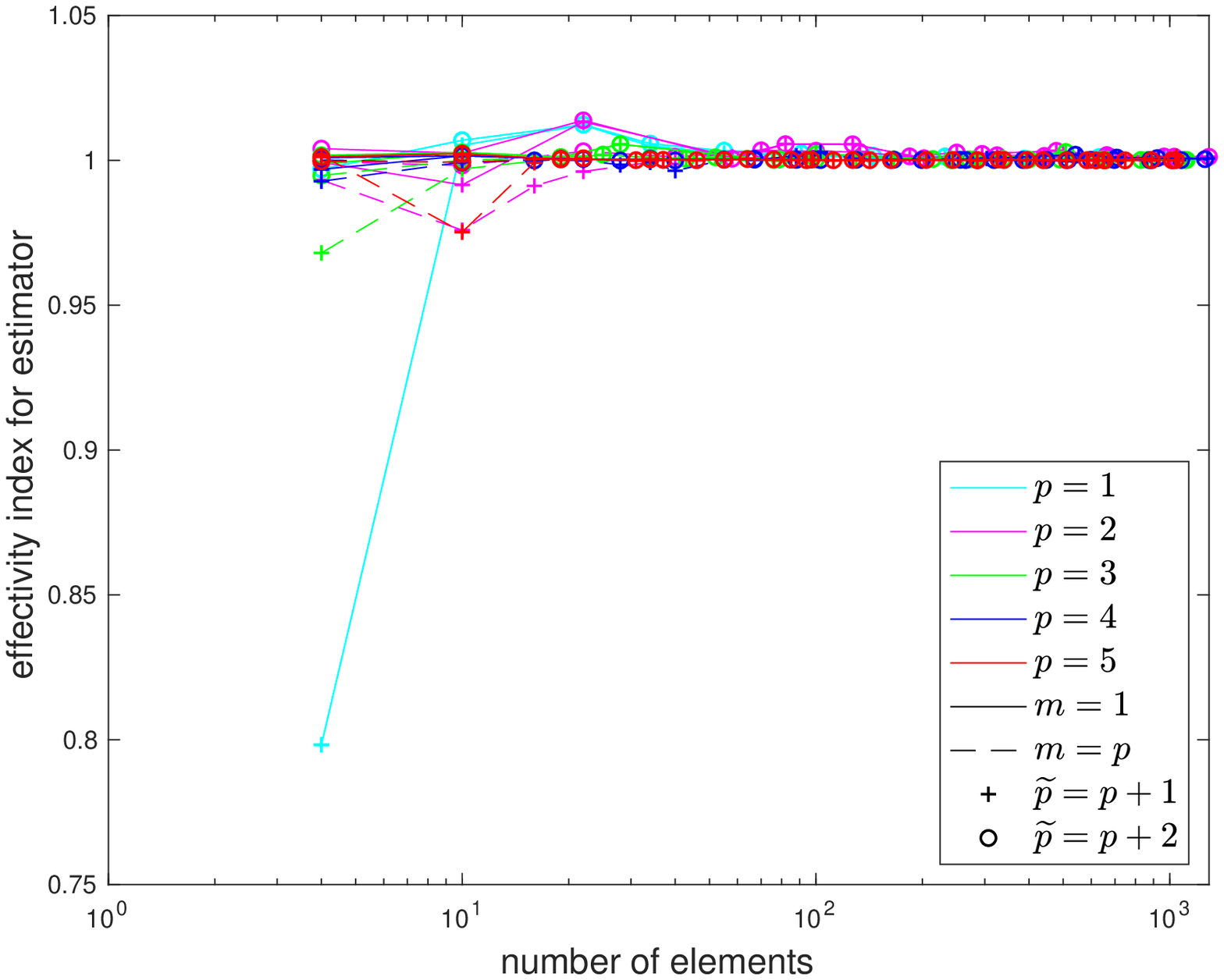}
\includegraphics[width=0.49\textwidth]{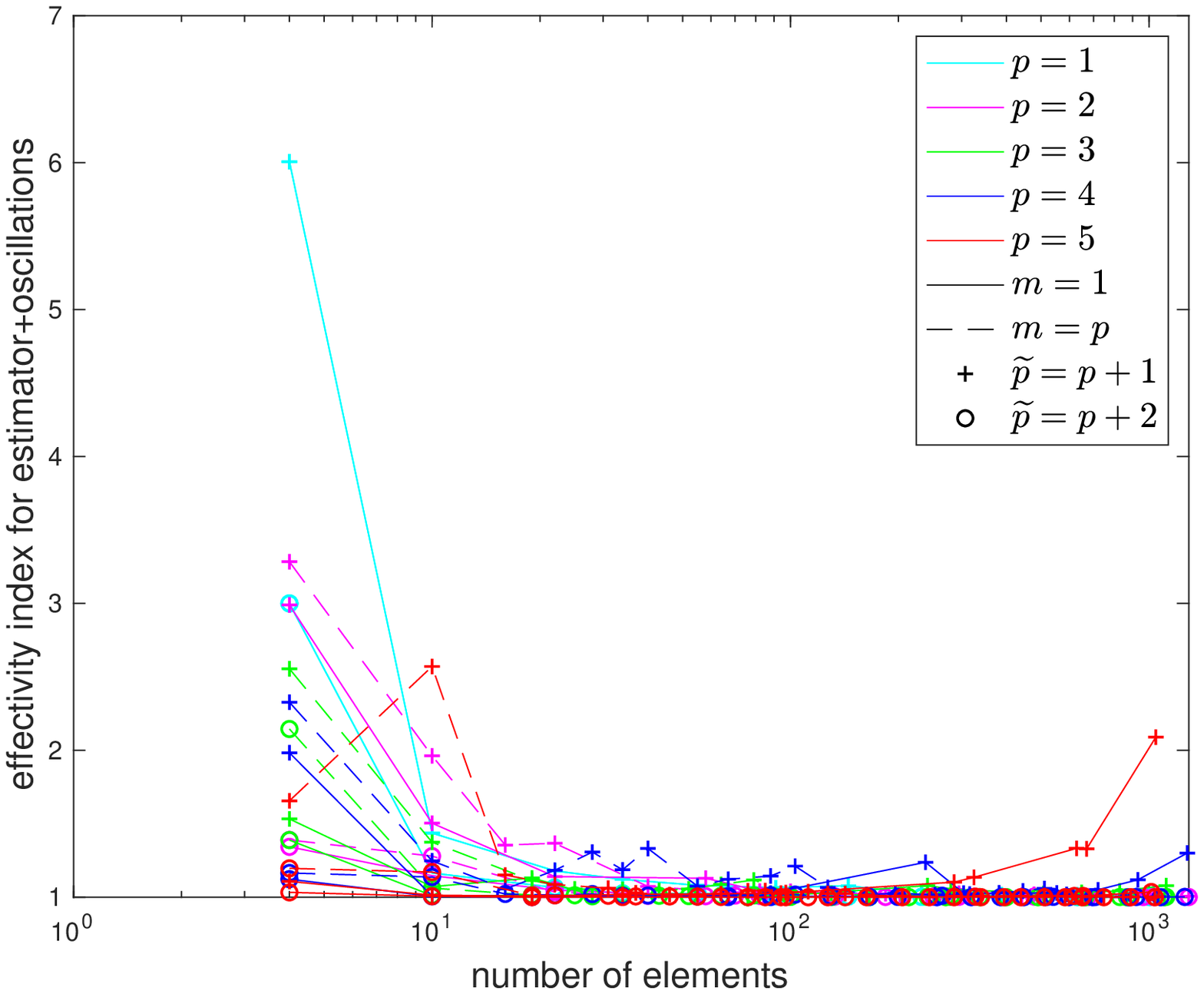}
\end{center}
\caption{\label{fig:adaptive}
Effectivity indices $\frac{\norm{\ssigma_\coarse+\nabla u_\coarse}{\Omega}}{\norm{\nabla(u-u_\coarse)}{\Omega}}$ (left) and $\frac{\norm{\ssigma_\coarse+\nabla u_\coarse}{\Omega} + \osc_\coarse^{\rm rel}}{\norm{\nabla(u-u_\coarse)}{\Omega}}$ (right) corresponding to the problem of Section~\ref{sec:numerics} with adaptive mesh refinement, polynomial degrees $p\in\{1,\dots,5\}$, multiplicities $m\in\{1,p\}$.}
\end{figure}

\begin{figure}[t]
\begin{center}
\includegraphics[width=0.49\textwidth]{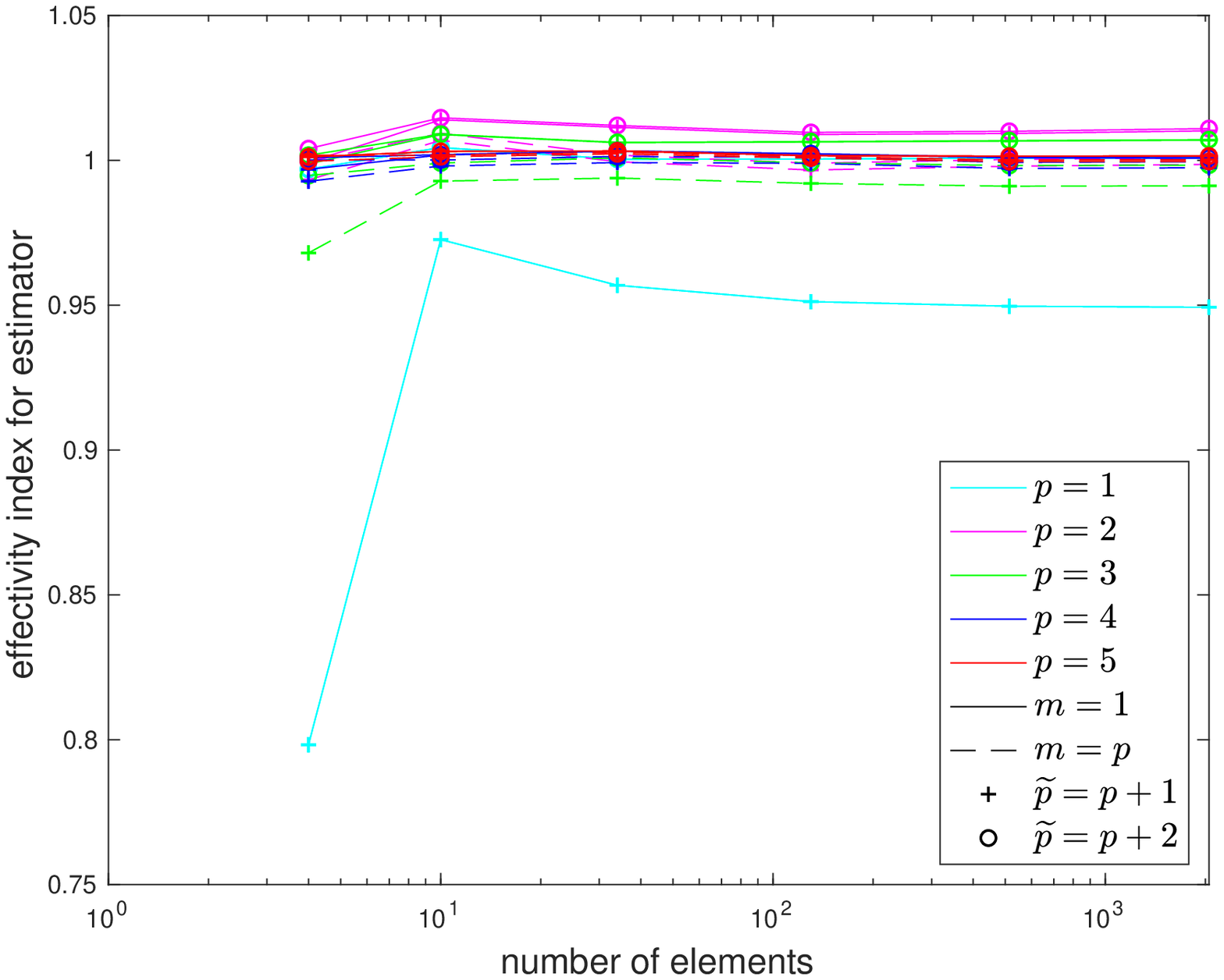}
\includegraphics[width=0.49\textwidth]{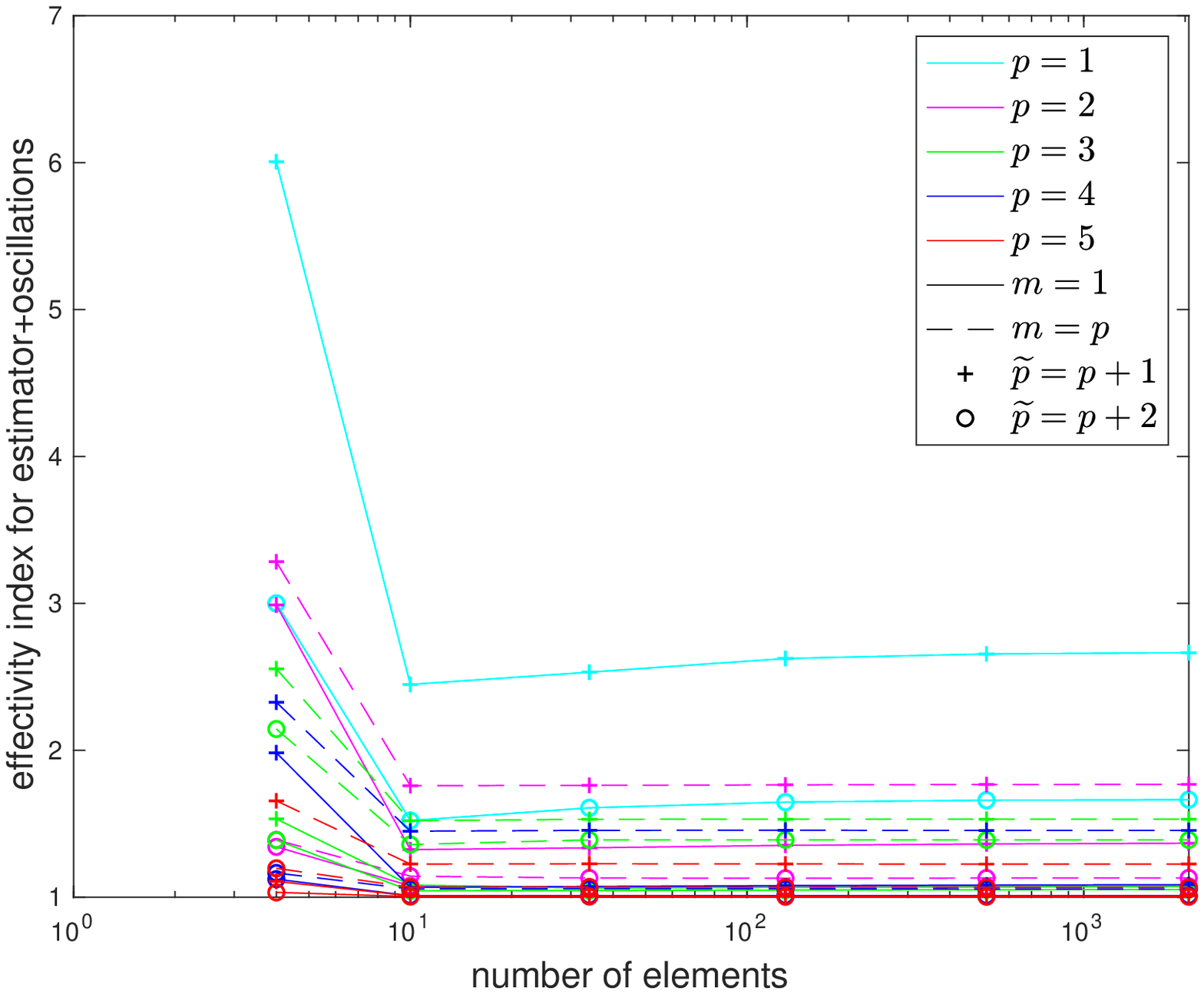}
\end{center}
\caption{\label{fig:hanging}
Effectivity indices $\frac{\norm{\ssigma_\coarse+\nabla u_\coarse}{\Omega}}{\norm{\nabla(u-u_\coarse)}{\Omega}}$ (left) and $\frac{\norm{\ssigma_\coarse+\nabla u_\coarse}{\Omega} + \osc_\coarse^{\rm rel}}{\norm{\nabla(u-u_\coarse)}{\Omega}}$  (right) corresponding to the problem of Section~\ref{sec:numerics} with artificial refinement enforcing a high number of hanging nodes (see Figure~\ref{fig:qring}, top), polynomial degrees $p\in\{1,\dots,5\}$, multiplicities $m\in\{1,p\}$.}
\end{figure}

\begin{figure}[t]
\begin{center}
\includegraphics[width=0.49\textwidth]{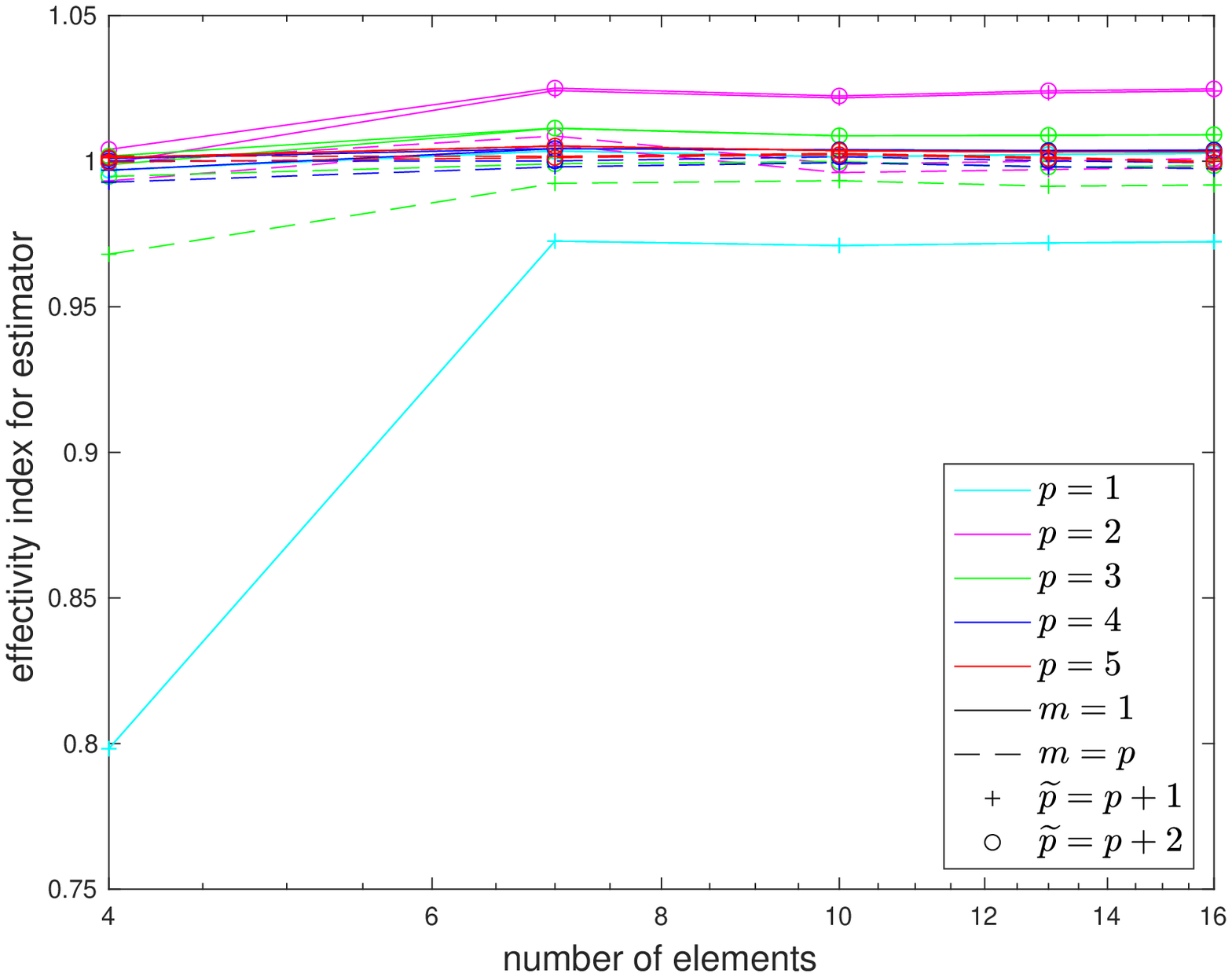}
\includegraphics[width=0.49\textwidth]{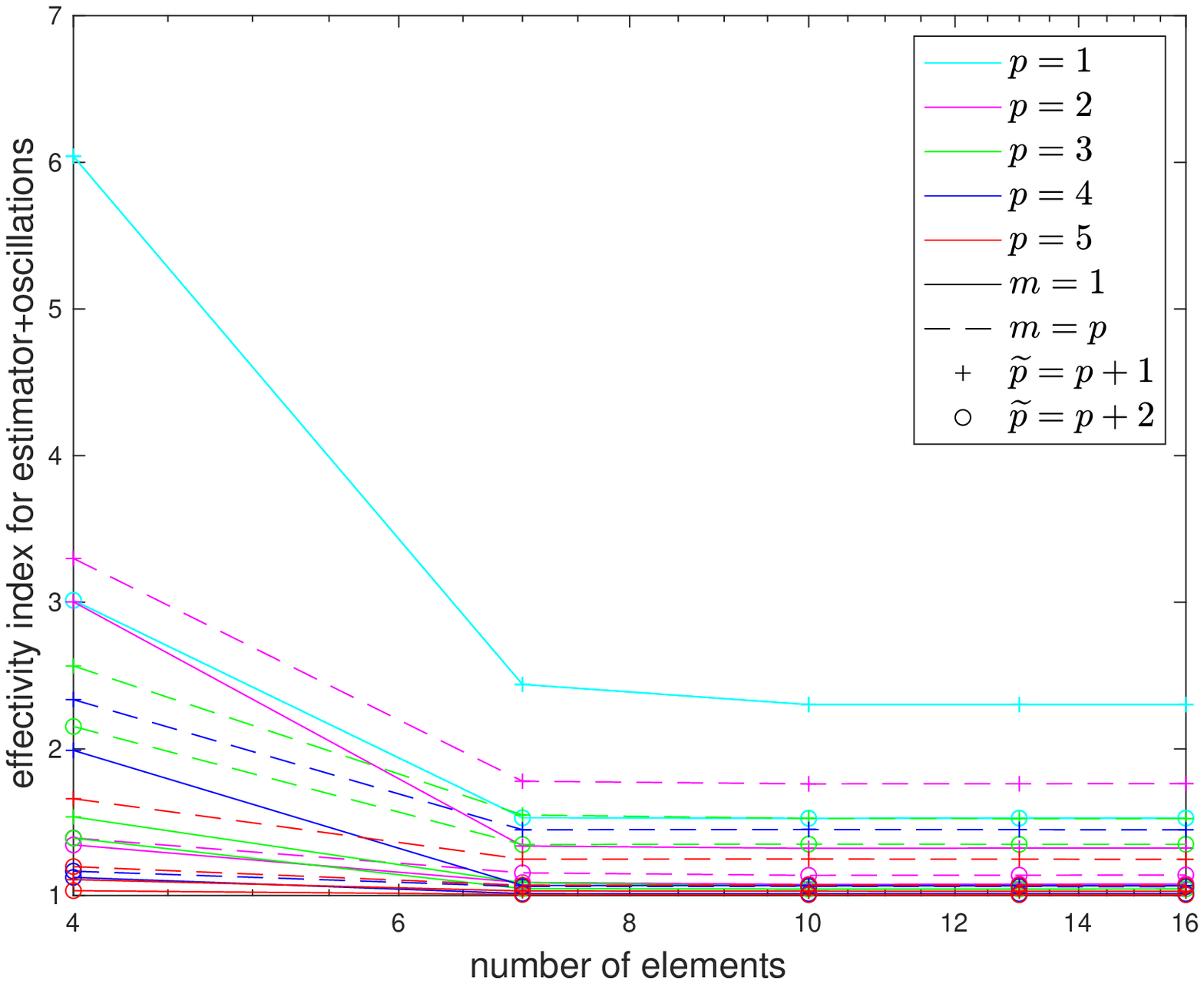}
\end{center}
\caption{\label{fig:local}
Effectivity indices $\frac{\norm{\ssigma_\coarse+\nabla u_\coarse}{\Omega}}{\norm{\nabla(u-u_\coarse)}{\Omega}}$ (left) and $\frac{\norm{\ssigma_\coarse+\nabla u_\coarse}{\Omega} + \osc_\coarse^{\rm rel}}{\norm{\nabla(u-u_\coarse)}{\Omega}}$  (right) corresponding to the problem of Section~\ref{sec:numerics} with artificial refinement enforcing a high number of overlapping patches (see Figure~\ref{fig:qring}, bottom), polynomial degrees $p\in\{1,\dots,5\}$, multiplicities $m\in\{1,p\}$.}
\end{figure}

\begin{figure}[t]
\begin{center}
\includegraphics[width=0.49\textwidth]{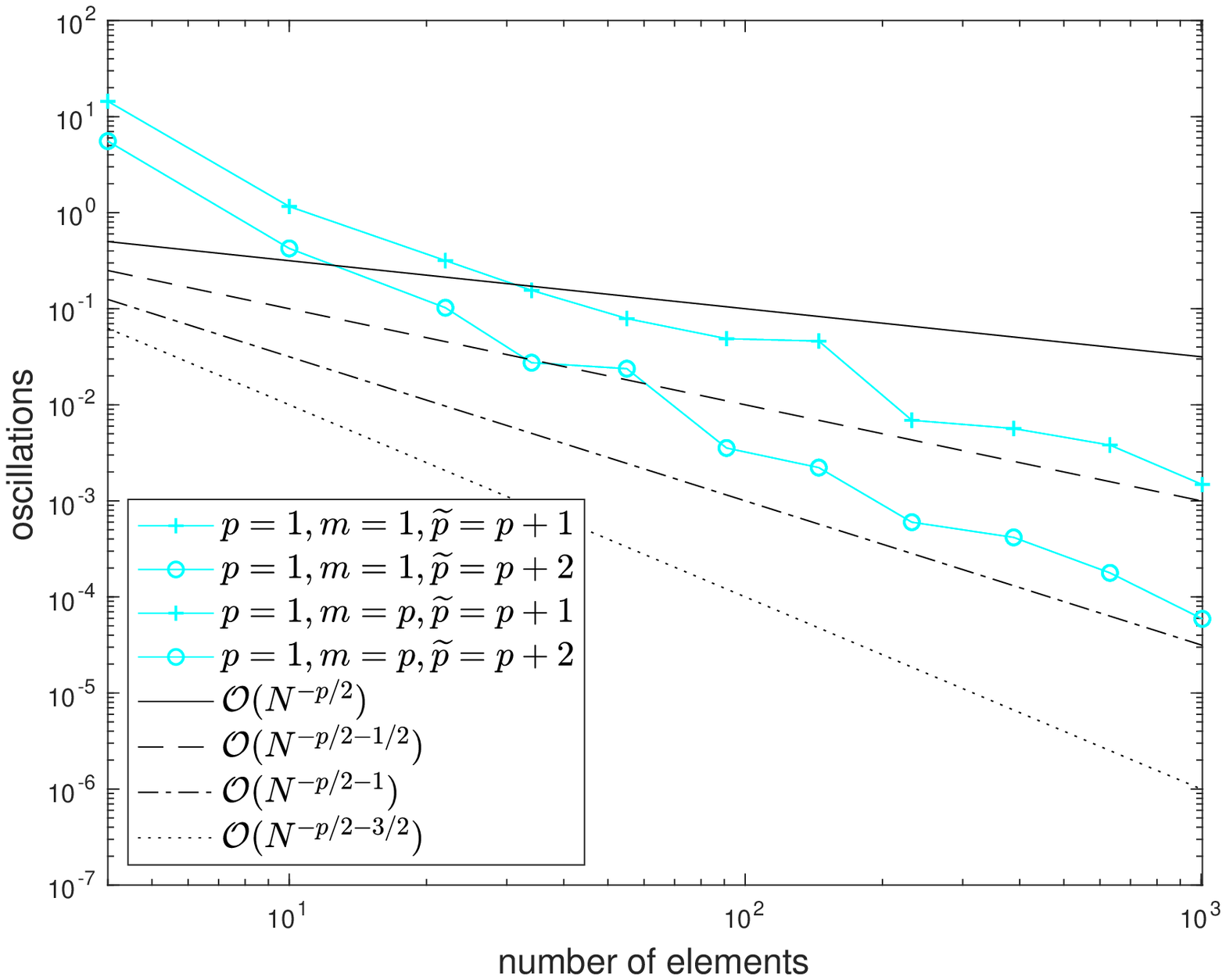}
\includegraphics[width=0.49\textwidth]{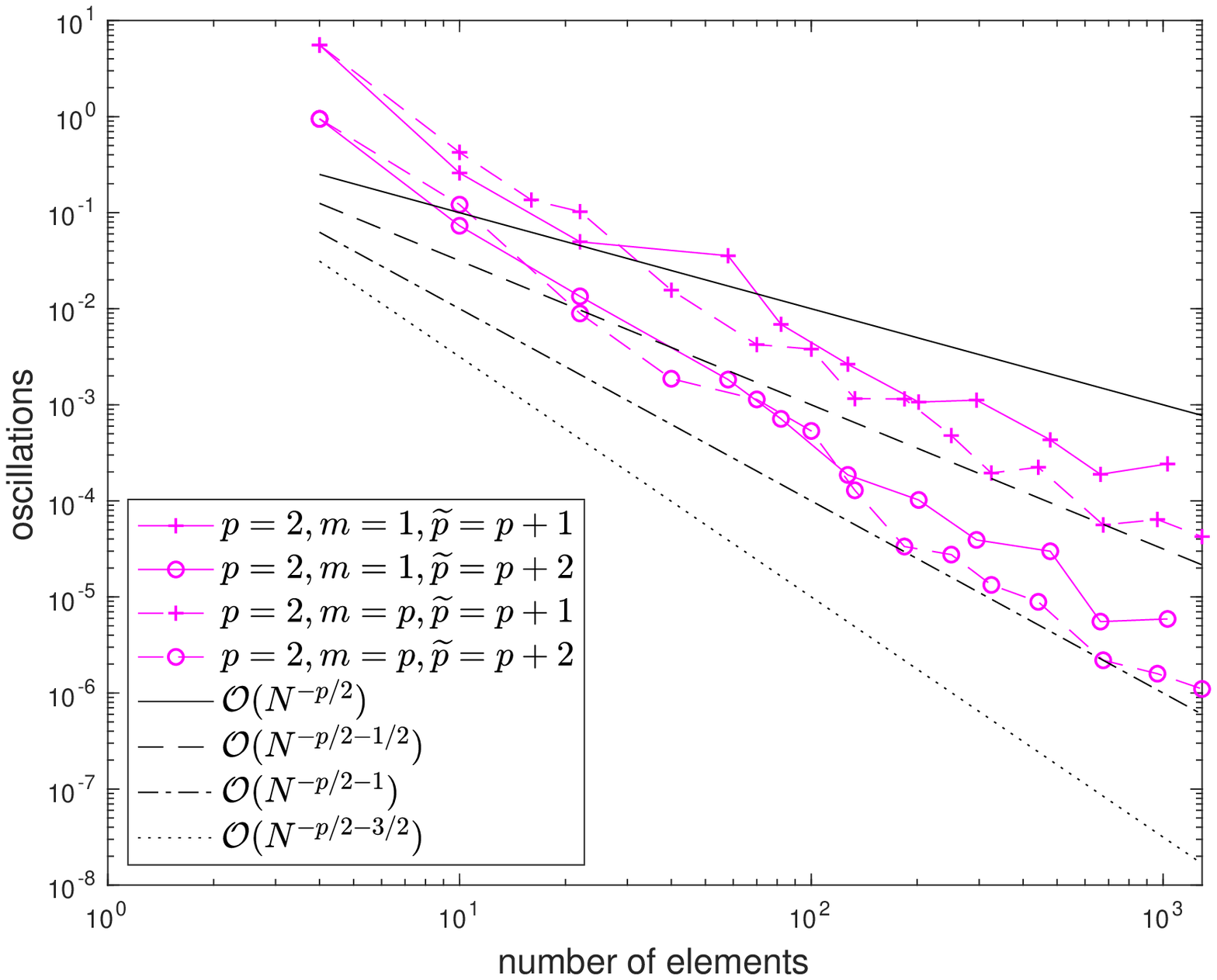}
\includegraphics[width=0.49\textwidth]{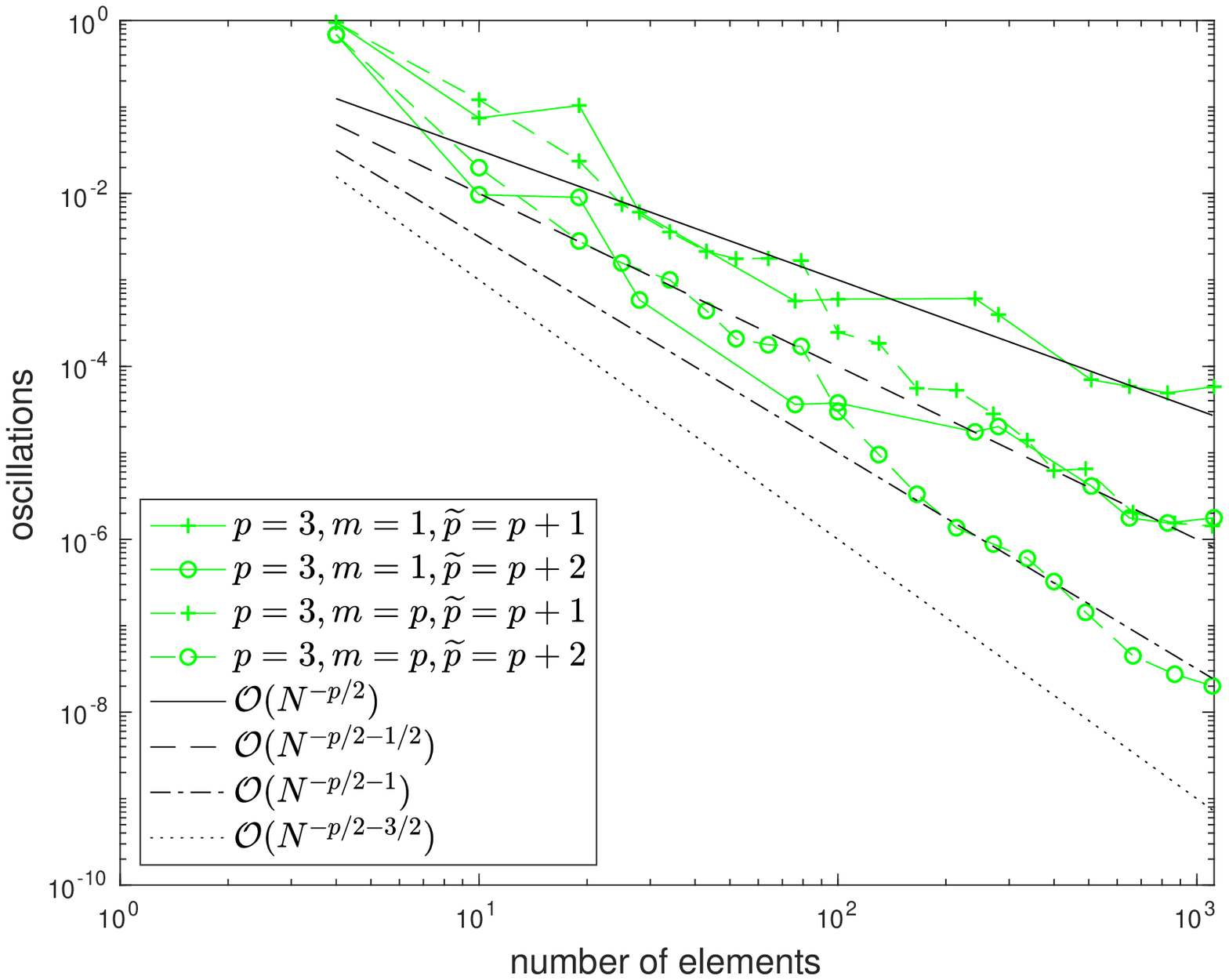}
\includegraphics[width=0.49\textwidth]{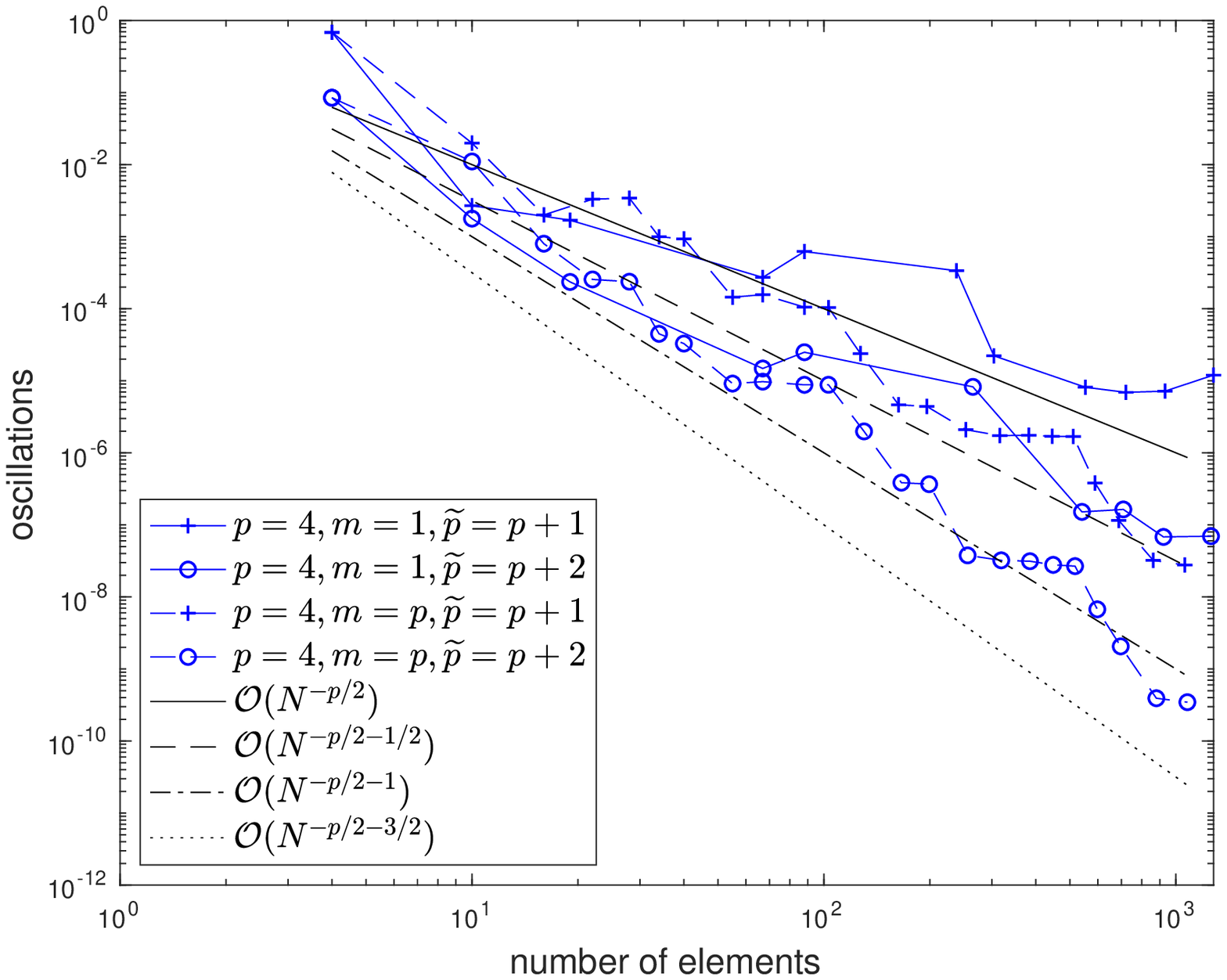}
\includegraphics[width=0.49\textwidth]{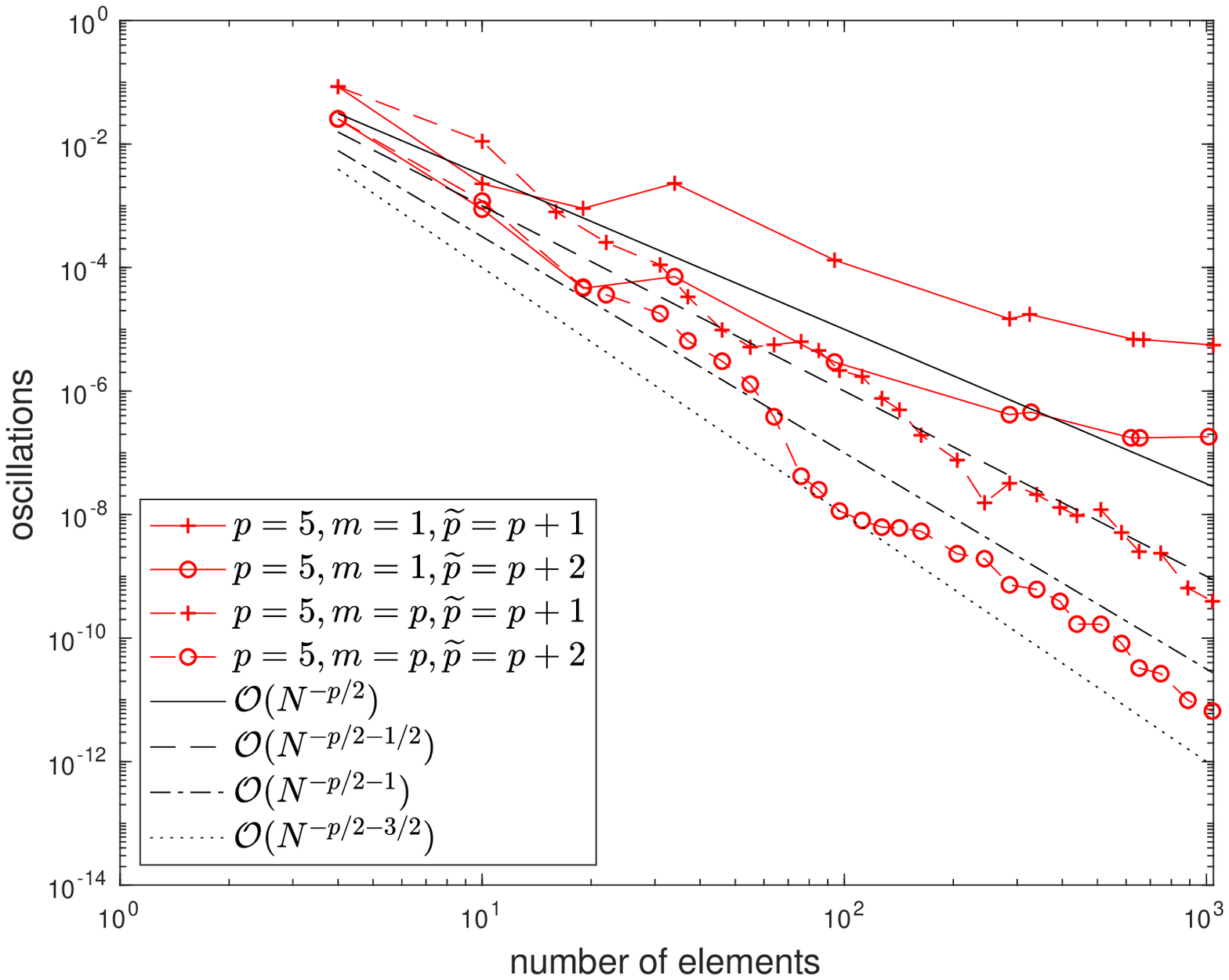}
\end{center}
\caption{\label{fig:oscs}
Oscillation terms $\osc_\coarse^{\rm rel}$ from~\eqref{eq_osc} corresponding to the problem of Section~\ref{sec:numerics} with adaptive mesh refinement, polynomial degrees $p\in\{1,\dots,5\}$, multiplicities $m\in\{1,p\}$.}
\end{figure}

\appendix
\section{Proof of the broken polynomial extension property}\label{sec:extension}

In this section, we verify Assumption~\ref{ass:min RT to cont} for the spaces defined in Section~\ref{sec:discrete spaces}.
Recall the Piola transformations from~\eqref{eq:piola1} and~\eqref{eq:piola2}
\begin{align*}
 \bPhi(\cdot)\eq \big(\det (D\F)^{-1}(D\F)(\cdot)\big)\circ\F^{-1}
 \quad\text{and}\quad
 \widetilde \Phi(\cdot)\eq\big(\det (D\F)^{-1} (\cdot)\big)\circ \F^{-1}
\end{align*}
and the identity~\eqref{eq:Piola_eq}
\begin{align}\label{eq:piola identity}
\widetilde\Phi\big(\nabla{\cdot}(\cdot)\big) = \Dv\bPhi(\cdot);
\end{align}
see, e.g., \cite[Lemma~9.6]{eg21}.
Following the imposition of the Neumann boundary condition in the space $\H_0(\div,\omega_\vertt)$ in~\eqref{eq:H0 div for b}, we denote by $\EE_\vertt^{\rm N}$ the boundary faces of the local mesh $\TT_\vertt$ if $\psi_\ver\psi_\vertt\in H_0^1(\Omega)$ and such boundary faces of $\TT_\vertt$ where $\ee\subset(\psi_\ver\psi_\vertt)^{-1}(\{0\})$ for $\psi_\ver\psi_\vertt\not\in H_0^1(\Omega)$. By $\EE_\vertt^{\rm int}$, we denote the interior faces of $\TT_\vertt$.
Analogously, we write $\widehat\EE_\vertt^{\rm N}$ and $\widehat\EE_\vertt^{\rm int}$ for the corresponding faces on the parameter mesh $\widehat\TT_\vertt$.
Finally, we write $\nabla_\vertt{\cdot}(\cdot)$ for the $\TT_\vertt$- or $\widehat\TT_\vertt$-piecewise divergence operator.

Since $\ttau_\coarse \in \RT_\coarse^{\ver,\vertt} = \RT^{\widetilde\p}(\TT_\vertt)$ and $\V_\coarse^{\ver,\vertt} = \RT^{\widetilde\p}(\TT_\vertt) \cap \H_0(\div,\omega_\vertt)$ by~\eqref{eq:V subset}, we can substitute in~\eqref{eq:min RT to cont} $\bv_h+\ttau_h=\w_h$ with $\w_h=\bPhi(\widehat\w_h)$ and $\widehat\w_h \in \RT^{\widetilde\p}(\widehat\TT_\vertt)$, also using~\eqref{eq_RT_map}.
 This shows that
\begin{align}\label{eq:long substacks}
 \min_{\substack{\bv_\coarse\in\V_\coarse^{\ver,\vertt}\\
 \Dv\bv_\coarse=g_\coarse}}
 \norm{\bv_\coarse + \ttau_\coarse}{\omega_\vertt}
 = \!\!\!
\min_{\substack{\w_h\in \RT^{\widetilde\p}(\TT_\vertt)\\
 \w_h{\cdot}\n_{\omega_\vertt}|_\ee = \ttau_h{\cdot}\n_{\omega_\vertt}|_\ee=:r_\ee  \,\,\,\forall \ee\in\EE_\vertt^{\rm N}\\
 [\w_h{\cdot}\n_{\omega_\vertt}]_\ee = [\ttau_h{\cdot}\n_{\omega_\vertt}]_\ee=:r_\ee \,\,\,\forall \ee\in\EE_\vertt^{\rm int}\\
 \nabla_\vertt{\cdot}\w_h|_T = (g_h + \nabla_\vertt{\cdot}\ttau_h)|_T=:r_T \,\,\, \forall T\in\TT_\vertt}} \!\!\!
 \norm{\w_h}{\omega_\vertt}
 = \!\!\!
  \min_{\substack{\widehat\w_h\in\RT^{\widetilde\p}(\widehat\TT_\vertt)\\
 \bPhi(\widehat\w_h){\cdot}\n_{\omega_\vertt}|_\ee = r_\ee  \,\,\,\forall \ee\in\EE_\vertt^{\rm N}\\
 [\bPhi(\widehat\w_h){\cdot}\n_{\omega_\vertt}]_\ee = r_\ee \,\,\,\forall \ee\in\EE_\vertt^{\rm int}\\
 \nabla_\vertt{\cdot}\bPhi(\widehat\w_h)|_T = r_T \,\,\, \forall T\in\TT_\vertt}} \!\!\!
 \norm{\bPhi(\widehat\w_h)}{\omega_\vertt}.
\end{align}
With $\n_{\widehat\omega_\vertt}$ denoting the outer normal vector on $\partial\widehat\omega_\vertt$, elementary analysis, cf.~\cite[Lemma~9.11]{eg21}, provides the relation
\begin{align*}
 (D\F^{\top}\circ\F^{-1})\, \n_{\omega_\vertt} = \n_{\widehat\omega_\vertt} \circ \F^{-1} |(D\F^{\top}\circ\F^{-1})\, \n_{\omega_\vertt}|
\end{align*}
and thus
\begin{align*}
 \bPhi(\widehat\w_h){\cdot}\n_{\omega_\vertt}
 & =  \Big(\big(\det(D\F)^{-1} \widehat \w_h^{\top}(D\F)^\top\big)\circ\F^{-1} \Big)\n_{\omega_\vertt} \\
 & = \Big( \widehat\w_h{\cdot}\n_{\widehat \omega_\vertt}\, \frac{|(D\F^{\top}\circ\F^{-1})\, \n_{\omega_\vertt}|}{\det(D\F)}\Big) \circ\F^{-1}.
\end{align*}
Hence, with $\widehat \ee\eq\F^{-1}(\ee)$ and $\widehat\ttau_h\eq\bPhi^{-1}(\ttau_h)$, the first equation in the last minimum of~\eqref{eq:long substacks} is equivalent to $\widehat\w_h{\cdot}\n_{\widehat\omega_\vertt}|_{\widehat \ee}=\widehat\ttau_h{\cdot}\n_{\widehat\omega_\vertt}|_{\widehat \ee}=:r_{\widehat \ee}$, and the second equation is equivalent to $[\widehat \w_h{\cdot}\n_{\widehat\omega_\vertt}]|_{\widehat \ee} = [\widehat \ttau_h{\cdot}\n_{\widehat\omega_\vertt}]_{\widehat \ee}=:r_{\widehat \ee}$.
The identity~\eqref{eq:piola identity} shows that the third equation is equivalent to $\nabla_{\vertt}{\cdot}\widehat\w_h|_{\widehat T} = \widetilde \Phi^{-1}(g_h +\nabla_{\vertt}{\cdot}\ttau_h)=:r_{\widehat T}$.
As $\norm{\bPhi(\widehat\w_h)}{\omega_\vertt} \ls \norm{\widehat \w_h}{\widehat\omega_\vertt}$ (with a hidden constant depending only on $\norm{D\F}{\infty,\widehat\Omega}$), we can  formulate~\eqref{eq:long substacks} in the parameter domain
\begin{align}\label{eq_Piola_equiv}
 \min_{\substack{\bv_\coarse\in\V_\coarse^{\ver,\vertt}\\
 \Dv\bv_\coarse=g_\coarse}}
 \norm{\bv_\coarse + \ttau_\coarse}{\omega_\vertt}
 \ls
 \min_{\substack{\widehat\w_h\in\RT^{\widetilde\p}(\widehat\TT_\vertt)\\
 \widehat\w_h{\cdot}\n_{\widehat\omega_\vertt}|_{\widehat \ee} = r_{\widehat \ee}  \,\,\,\forall \widehat \ee\in\widehat\EE_\vertt^{\rm N}\\
 [\widehat\w_h{\cdot}\n_{\widehat\omega_\vertt}]_{\widehat \ee} = r_{\widehat \ee} \,\,\,\forall \widehat \ee\in\widehat\EE_\vertt^{\rm int}\\
 \nabla_{\vertt}{\cdot}\widehat\w_h|_{\widehat T} = r_{\widehat T} \,\,\, \forall \widehat T\in\widehat\TT_\vertt}
 }
 \norm{\widehat\w_h}{\widehat \omega_\vertt}.
\end{align}
Finally, an application of~\cite[Theorems~5 and~7]{Brae_Pill_Sch_p_rob_09} in two space dimensions or a similar procedure relying on~\cite{Cost_Daug_Demk_ext_08} in three space dimensions, see also~\cite[Theorem~2.5 and Corollary~3.3]{ev20} building on~\cite{Cost_McInt_Bog_Poinc_10, Demk_Gop_Sch_ext_III_12}, yields that
\begin{align*}
 \min_{\substack{\widehat\w_h\in\RT^{\widetilde\p}(\widehat\TT_\vertt)\\
 \widehat\w_h{\cdot}\n_{\widehat\omega_\vertt}|_{\widehat \ee} = r_{\widehat \ee}  \,\,\,\forall \widehat \ee\in\widehat\EE_\vertt^{\rm N}\\
 [\widehat\w_h{\cdot}\n_{\widehat\omega_\vertt}]_{\widehat \ee} = r_{\widehat \ee} \,\,\,\forall \widehat \ee\in\widehat\EE_\vertt^{\rm int}\\
 \nabla_{\vertt}{\cdot}\widehat\w_h|_{\widehat T} = r_{\widehat T} \,\,\, \forall \widehat T\in\widehat\TT_\vertt}
 } \norm{\widehat\w_h}{\widehat \omega_\vertt}
 \leq C_{\rm st}
  \min_{\substack{\widehat\w\in\H(\div,\widehat\TT_\vertt)\\
 \widehat\w{\cdot}\n_{\widehat\omega_\vertt}|_{\widehat \ee} = r_{\widehat \ee}  \,\,\,\forall \widehat \ee\in\widehat\EE_\vertt^{\rm N}\\
 [\widehat\w{\cdot}\n_{\widehat\omega_\vertt}]_{\widehat \ee} = r_{\widehat \ee} \,\,\,\forall \widehat \ee\in\widehat\EE_\vertt^{\rm int}\\
 \nabla_{\vertt}{\cdot}\widehat\w|_{\widehat T} = r_{\widehat T} \,\,\, \forall \widehat T\in\widehat\TT_\vertt}}
 \norm{\widehat\w}{\widehat\omega_\vertt},
\end{align*}
for a generic constant $C_{\rm st}$ only depending on the shapes of the elements in $\widehat \TT_{\uni{0}}$.
Here, $\H(\div,\widehat \TT_\vertt)$ is the space of $\widehat\TT_\vertt$-piecewise  $\H(\div)$-functions on $\widehat\omega_\vertt$, with the normal component understood as in~\cite[equation~(2.6)]{ev20}.
The required compatibility condition
\begin{align*}
 \sum_{\widehat T\in \widehat \TT_\vertt}  \dual{r_{\widehat T}}{1}_{\widehat T}
 - \sum_{\widehat \ee\in\widehat \EE_\vertt^{\rm N}\cup\widehat \EE_\vertt^{\rm int}} \dual{r_{\widehat \ee}}{1}_{\widehat \ee}
 = 0
\end{align*}
in case of $\psi_\ver\psi_\vertt\in H_0^1(\Omega)$ follows from the assumption that $g_h$, and thus $\widetilde \Phi^{-1}(g_h)$, has integral mean zero then. Finally,
\[
  \min_{\substack{\widehat\w\in\H(\div,\widehat\TT_\vertt)\\
 \widehat\w{\cdot}\n_{\widehat\omega_\vertt}|_{\widehat \ee} = r_{\widehat \ee}  \,\,\,\forall \widehat \ee\in\widehat\EE_\vertt^{\rm N}\\
 [\widehat\w{\cdot}\n_{\widehat\omega_\vertt}]_{\widehat \ee} = r_{\widehat \ee} \,\,\,\forall \widehat \ee\in\widehat\EE_\vertt^{\rm int}\\
 \nabla_{\vertt}{\cdot}\widehat\w|_{\widehat T} = r_{\widehat T} \,\,\, \forall \widehat T\in\widehat\TT_\vertt}}
 \norm{\widehat\w}{\widehat\omega_\vertt}
 \ls
 \min_{\substack{\bv\in\H_0(\div,\omega_\vertt)\\
 \Dv\bv=g_\coarse}}
 \norm{\bv + \ttau_\coarse}{\omega_\vertt}
\]
with a hidden constant depending only on $\norm{(D\F)^{-1}}{\infty,\widehat\Omega}$, as in~\eqref{eq:long substacks} and~\eqref{eq_Piola_equiv}. This concludes the proof.

\bibliographystyle{alpha}
\bibliography{literature}

\end{document}